%2multibyte Version: 5.50.0.2960 CodePage: 65001
%% This document created by Scientific Word (R) Version 2.5
%\newtheorem{theorem}{Theorem}
%\newtheorem{axiom}[theorem]{Axiom}
%\newtheorem{conjecture}[theorem]{Conjecture}
%\newtheorem{corollary}[theorem]{Corollary}
%\newtheorem{definition}[theorem]{Definition}
%\newtheorem{example}[theorem]{Example}
%\newtheorem{exercise}[theorem]{Exercise}
%\newtheorem{lemma}[theorem]{Lemma}
%\newtheorem{proposition}[theorem]{Proposition}
%\newtheorem{remark}[theorem]{Remark}

\documentclass[12pt,thmsa,a4paper]{article}
%%%%%%%%%%%%%%%%%%%%%%%%%%%%%%%%%%%%%%%%%%%%%%%%%%%%%%%%%%%%%%%%%%%%%%%%%%%%%%%%%%%%%%%%%%%%%%%%%%%%%%%%%%%%%%%%%%%%%%%%%%%%%%%%%%%%%%%%%%%%%%%%%%%%%%%%%%%%%%%%%%%%%%%%%%%%%%%%%%%%%%%%%%%%%%%%%%%%%%%%%%%%%%%%%%%%%%%%%%%%%%%%%%%%%%%%%%%%%%%%%%%%%%%%%%%%
\usepackage{amssymb}
\usepackage{amsfonts}
\usepackage{amsmath}
\usepackage{float}
\usepackage[colorlinks=true]{hyperref}

\setcounter{MaxMatrixCols}{10}
%TCIDATA{TCIstyle=article/art4.lat,jart,sw20jart}

%TCIDATA{OutputFilter=LATEX.DLL}
%TCIDATA{Version=5.50.0.2960}
%TCIDATA{Codepage=65001}
%TCIDATA{<META NAME="SaveForMode" CONTENT="1">}
%TCIDATA{BibliographyScheme=Manual}
%TCIDATA{Created=Mon Apr 23 14:49:46 2001}
%TCIDATA{LastRevised=Sunday, October 12, 2025 09:46:46}
%TCIDATA{<META NAME="GraphicsSave" CONTENT="32">}
%TCIDATA{Language=American English}

\hypersetup{urlcolor=blue, citecolor=blue}
\newtheorem{theorem}{Theorem}

\newtheorem{corollary}{Corollary}

\newtheorem{example}{Example}

\newtheorem{lemma}{Lemma}

\newtheorem{remark}{Remark}

\newenvironment{proof}[1][Proof]{\noindent\textbf{#1.} }{\ \rule{0.5em}{0.5em}}

\oddsidemargin 0.0in
\evensidemargin 0.0in
\topmargin -0.5in
\textwidth 6.3in
\textheight 9in

\usepackage{graphicx}
\graphicspath{{converted_graphics/}}
\begin{document}

\title{Bifurcation Curves in Semipositone Problems with Geometrically
Concave and Concave Nonlinearities\thanks{%
2010 Mathematics Subject Classification: 34B15, 34B18, 34C23, 74G35.}}
\author{Shao-Yuan Huang\thanks{%
Department of Mathematics and Information Education, National Taipei
University of Education, Taipei 106, Taiwan. \ E-mail address:\textit{\ }%
syhuang@mail.ntue.edu.tw}}
\date{}
\maketitle

\begin{abstract}
In this paper, we study the exact multiplicity and bifurcation curves of
positive solutions for the semipositone problem%
\begin{equation*}
\left\{ 
\begin{array}{l}
-u^{\prime \prime }=\lambda f(u),\text{ \ in }\left( -1,1\right) , \\ 
u(-1)=u(1)=0,%
\end{array}%
\right.
\end{equation*}%
where $\lambda >0,$ $f\in C^{2}(0,\infty )$ and there exist $0<\beta
_{1}<\beta _{2}\leq \infty $ such that $f(u)>0$ on $\left( \beta _{1},\beta
_{2}\right) $, and $f(u)<0$ on $\left( 0,\beta _{1}\right) \cup \left( \beta
_{2},\infty \right) $. Note that we allow $f(0^{+})=-\infty $ and provide
many examples to illustrate these results. Furthermore, our results can also
yield the main theorems presented in references \cite{ref8,ref10}.
Additionally, Castro et al. \cite[J. Math. Anal. Appl.]{ref8} claim to have
resolved this issue if $f(0^{+})>-\infty $ and $f$ is concave on $\left(
0,\infty \right) $. However, we find that their proof is incorrect.
Nonetheless, our results demonstrate the correctness of their result.
\end{abstract}

\section{Introduction}

In this paper, we study the exact multiplicity and bifurcation curves of
positive solutions for the semipositone problem%
\begin{equation}
\left\{ 
\begin{array}{l}
-u^{\prime \prime }=\lambda f(u),\text{ \ in }\left( -1,1\right) , \\ 
u(-1)=u(1)=0,%
\end{array}%
\right.  \label{eq1}
\end{equation}%
where $\lambda >0$ is a bifurcation parameter and $f\in C^{2}(0,\infty )$
satisfies the following conditions:

\begin{itemize}
\item[(P$_{1}$)] there exist $0<\beta _{1}<\beta _{2}\leq \infty $ such that 
$f(u)>0$ on $\left( \beta _{1},\beta _{2}\right) $, and $f(u)<0$ on $\left(
0,\beta _{1}\right) \cup \left( \beta _{2},\infty \right) ;$

\item[(P$_{2}$)] $F(u)\equiv \int_{0}^{u}f(t)dt$ exists for $u>0$, and there
exists $\eta \in \left( \beta _{1},\beta _{2}\right) $ such that $F(\eta )=0$%
.
\end{itemize}

\begin{figure}[h] % float placement: (h)ere, page (t)op, page (b)ottom, other (p)age
  \centering
  % file name: C:/Users/M/Dropbox/工作-Huang/研究資料/黃少遠/投稿中/(2026投到AMEN)-semipostone with concave and geometric concave (R)/投稿中檔案/6 arXIV/fig1.bmp
  \includegraphics[width=6.12in,height=2.87in,keepaspectratio]{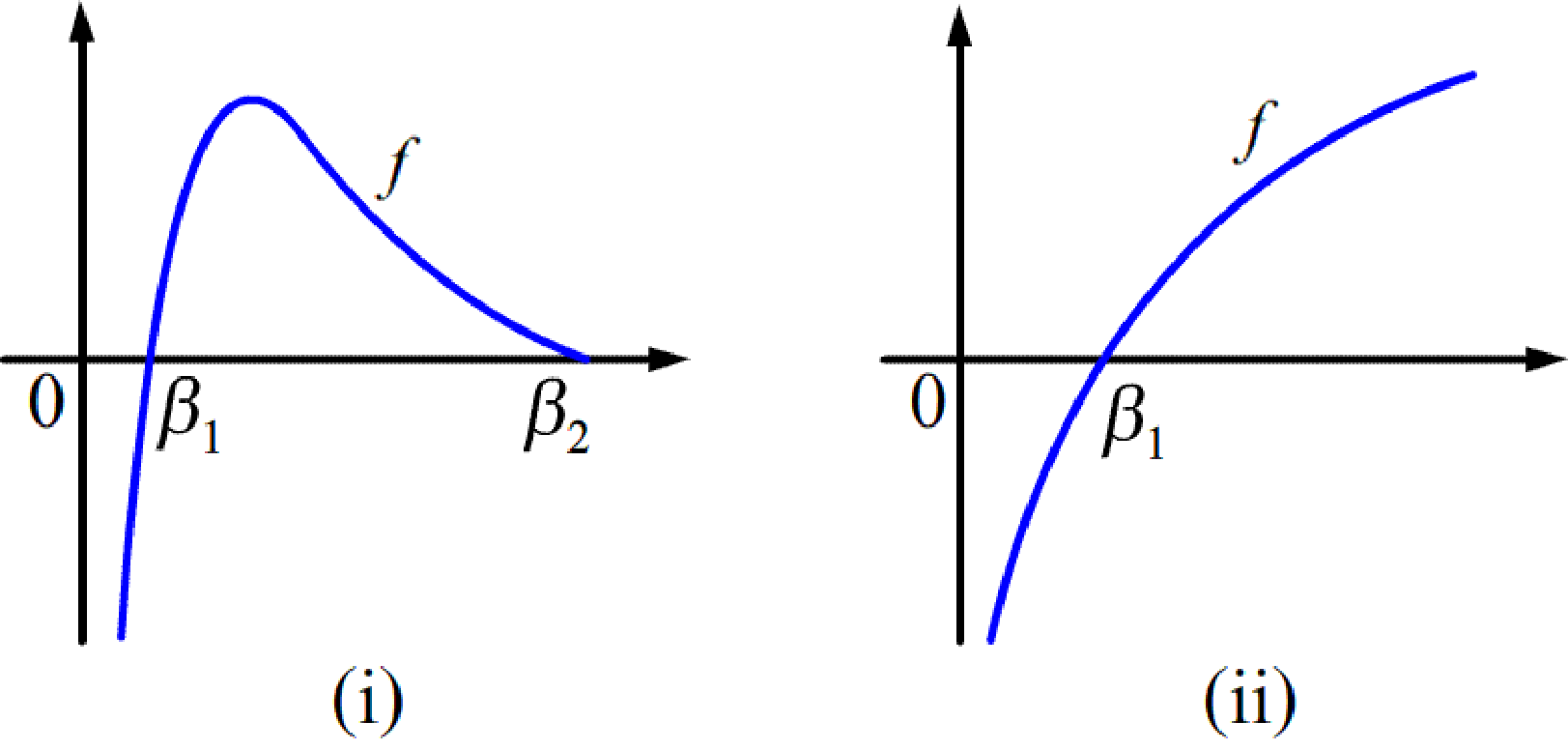}
  \caption{The graph of $f$. (i) $\protect%
\beta _{2}<\infty $. (ii) $\protect\beta _{2}=\infty $.}
  \label{fig1}
\end{figure} 
\noindent Note that we allow $%
f(0^{+})=-\infty $. It is well-known that studying the exact multiplicity of
positive solutions of (\ref{eq1}) is equivalent to analyzing the shape of
the bifurcation curve $S$ of (\ref{eq1}) where%
\begin{equation}
S\equiv \left\{ (\lambda ,\left\Vert u_{\lambda }\right\Vert _{\infty
}):\lambda >0\text{ and }u_{\lambda }\text{ is a positive solution of (\ref%
{eq1})}\right\} \text{.}  \label{S}
\end{equation}%
Therefore, in this paper, we focus on studying the exact shape of the
bifurcation curve $S$. Following the same argument as in \cite{ref15}, the
bifurcation curve $S$ is continuous and consists of exactly one curve, see
also \cite{Hung2}. These proofs are straightforward but tedious, so we omit
them.

The issue of semipositone arises in several fields within applied math and
physics. This includes such as how mechanical structures bend, how bridges
are built to hang, how chemicals react, and ways to model populations that
take into account harvesting, as mentioned in references \cite%
{ref3,ref16,ref23,ref28,ref29}. It's important to note that equation (\ref%
{eq1}) may have nonnegative solutions with zeros in some parts, c.f. \cite%
{ref10}. However, this study will focus on the solutions of (\ref{eq1}) that
are positive.

In 1988, Castro and Shivaji \cite{ref10} were the first to formally
investigate semipositone problems. Generally, it is more challenging to
determine positive solutions for semipositone problems than for positone
problems. This is because, in semipositone cases, the positive solutions
need to work in areas where the nonlinear part can be both negative and
positive. Additionally, this is a significant area of research, with
extensive studies on one-dimensional semipositone problems, as described in
references \cite{ref1,ref5,ref8,ref10,ref15,ref24,ref30,ref31,ref34}. In
particular, the case where $f$ is concave was studied in \cite{ref8}, the
case where $f$ is convex was studied in \cite{ref10}, the case where $f$ is
convex-concave was studied in \cite{ref30}, and the case where $f$ is
concave-convex was also studied in \cite{ref10}.

Next, we mention two special cases. Castro and Shivaji \cite{ref10} studied
the case that $f^{\prime \prime }(u)>0$ for $u>0$, and further obtained the
following result:

\begin{theorem}[{\protect\cite[Theorem 1.1]{ref10}}]
\label{RT1}Consider (\ref{eq1}) with $\beta _{2}=\infty $. Assume that 
\begin{equation}
-\infty <f(0^{+})<0\text{, \ }\lim_{u\rightarrow \infty }\frac{f(u)}{u}%
=\infty \text{, \ }f^{\prime }(u)>0\text{ and }f^{\prime \prime }(u)>0\text{
for }u>0\text{.}  \label{h1}
\end{equation}%
Then there exists $\lambda ^{\ast }>0$ such that the bifurcation curve $S$
is monotone decreasing, starts from $(\lambda ^{\ast },\eta )$ and goes to $%
\left( 0,\infty \right) $, see Figure \ref{RT2}.

\begin{figure}[h] % float placement: (h)ere, page (t)op, page (b)ottom, other (p)age
  \centering
  % file name: C:/Users/M/Dropbox/工作-Huang/研究資料/黃少遠/投稿中/(2026投到AMEN)-semipostone with concave and geometric concave (R)/投稿中檔案/6 arXIV/figCT1.bmp
  \includegraphics[width=3.45in,height=3.34in,keepaspectratio]{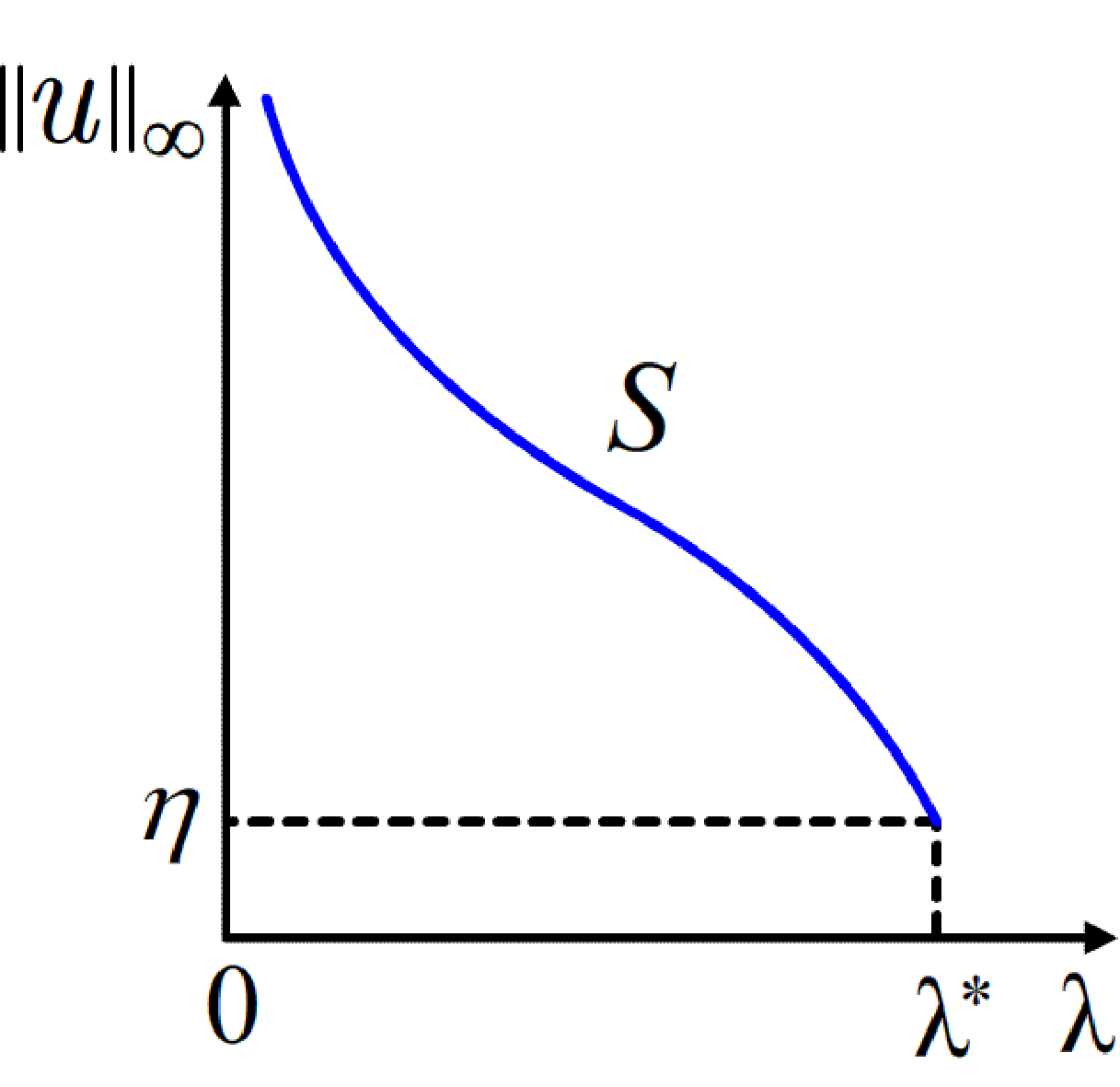}
  \caption{The graphs of the bifurcation curve $S$ of (\protect\ref%
{eq1}) with $\protect\beta _{2}=\infty $ if (\protect\ref{h1}) holds.}
  \label{figCT1}
\end{figure}
\end{theorem}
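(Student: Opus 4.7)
The plan is to parameterize the bifurcation curve $S$ by $\rho\equiv\|u_{\lambda}\|_{\infty}=u_{\lambda}(0)$ and use the classical time-map (quadrature) method. Any positive solution of (\ref{eq1}) is even about $x=0$ with a unique interior maximum at $\rho$; the first-integral identity $\tfrac{1}{2}(u')^{2}=\lambda[F(\rho)-F(u)]$ combined with $u(\pm 1)=0$ forces $F(\rho)>0$. Under (\ref{h1}), $F'=f$ is negative on $(0,\beta_{1})$ and positive on $(\beta_{1},\infty)$, so $F$ admits a unique positive zero, namely $\eta$, and the admissible range is $\rho\in(\eta,\infty)$. Separating variables then yields the explicit parametrization
\begin{equation*}
\lambda(\rho)\;=\;\tfrac{1}{2}\,T(\rho)^{2},\qquad T(\rho)\;\equiv\;\int_{0}^{\rho}\frac{du}{\sqrt{F(\rho)-F(u)}},\qquad \rho\in(\eta,\infty).
\end{equation*}

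To establish $\lambda'(\rho)<0$, I would introduce the auxiliary function $\Theta(u)\equiv F(u)-\tfrac{1}{2}uf(u)$. A direct computation gives $\Theta(0)=0$, $\Theta'(0^{+})=\tfrac{1}{2}f(0^{+})<0$, and $\Theta''(u)=-\tfrac{1}{2}uf''(u)<0$ for $u>0$. Thus $\Theta$ is strictly concave on $[0,\infty)$ with negative initial slope, so $\Theta'(u)<\Theta'(0^{+})<0$ and hence $\Theta$ is strictly decreasing on $[0,\infty)$. After the substitution $u=\rho v$ in $T(\rho)$, differentiating under the integral sign, and reverting to the $u$-variable, the standard manipulation produces the representation
\begin{equation*}
T'(\rho)\;=\;\frac{1}{\rho}\int_{0}^{\rho}\frac{\Theta(\rho)-\Theta(u)}{[F(\rho)-F(u)]^{3/2}}\,du.
\end{equation*}
Since $\Theta(\rho)<\Theta(u)$ for $0<u<\rho$, the integrand is strictly negative, so $T'(\rho)<0$ and therefore $\lambda'(\rho)<0$ on $(\eta,\infty)$.

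For the endpoint behavior I would argue separately at $\eta$ and at $\infty$. As $\rho\to\eta^{+}$, monotone convergence gives $T(\rho)\to\int_{0}^{\eta}du/\sqrt{-F(u)}$, and the latter is finite because $-F(u)$ vanishes linearly at $u=0$ (since $f(0^{+})>-\infty$) and at $u=\eta$ (since $f(\eta)>0$); the contribution from $u\in(\eta,\rho)$ is of order $\sqrt{\rho-\eta}$ and vanishes. Hence $\lambda(\rho)\to\lambda^{*}\in(0,\infty)$. As $\rho\to\infty$, convexity of $F$ on $[0,\infty)$ (from $f'>0$) together with $F(0)=0$ gives $F(u)\leq(u/\rho)F(\rho)$ on $[0,\rho]$, whence $F(\rho)-F(u)\geq F(\rho)(\rho-u)/\rho$ and $T(\rho)\leq 2\rho/\sqrt{F(\rho)}$. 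The superlinearity $f(u)/u\to\infty$ then forces $F(\rho)/\rho^{2}\to\infty$, so $T(\rho)\to 0$ and $\lambda(\rho)\to 0$. Combined with strict monotonicity, this yields the claimed shape of $S$.

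The principal technical obstacle is the rigorous justification of the derivative formula for $T'(\rho)$: the integrand of $T(\rho)$ has a singularity at the moving upper endpoint $u=\rho$, so naive differentiation under the integral sign is not permitted. The standard remedy is precisely the substitution $u=\rho v$, which moves the singularity to the fixed point $v=1$ and makes $\rho\mapsto\rho/\sqrt{F(\rho)-F(\rho v)}$ smooth in $\rho$ on the interior of $(0,1)$ with an integrable majorant uniform on compact subintervals of $(\eta,\infty)$; dominated convergence then legitimizes the interchange. Once this is in place, everything else reduces to the sign analysis of $\Theta$, which in turn follows cleanly from the two key hypotheses $f''>0$ and $f(0^{+})<0$.
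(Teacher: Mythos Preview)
Your proof is correct and follows essentially the same route as the paper: the paper derives this theorem as a special case of its Corollary~\ref{C2}, which (via Theorem~\ref{T2}(i) and Lemma~\ref{L1}(i)) rests on exactly your observation that $\theta(u)=2\Theta(u)$ is strictly decreasing because $\theta''(u)=-uf''(u)<0$ and $\theta'(0^{+})\le 0$, whence the time-map derivative formula $T'(\alpha)\propto\int_{0}^{\alpha}[\theta(\alpha)-\theta(u)]\,[F(\alpha)-F(u)]^{-3/2}\,du$ yields $T'<0$. One minor point: your assertion $\Theta'(0^{+})=\tfrac{1}{2}f(0^{+})$ tacitly uses $uf'(u)\to 0$, which holds here since $f''>0$ and $f'>0$ force $f'(0^{+})<\infty$; the paper makes this step explicit in the proof of Corollary~\ref{C2}.

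The only substantive difference is at the endpoint $\rho\to\infty$. The paper (Lemma~\ref{L4}(i), Case~3) handles $\lim_{u\to\infty}g(u)=\infty$ by introducing the auxiliary functions $L_{\alpha}(t)=1-F(\alpha t)/F(\alpha)$ and splitting the integral at $N_{3}/\alpha$, using concavity of $L_{\alpha}$ on the right piece. Your argument is more direct: convexity of $F$ (from $f'>0$) and $F(0)=0$ give the chord inequality $F(u)\le (u/\rho)F(\rho)$ on $[0,\rho]$, hence $F(\rho)-F(u)\ge F(\rho)(\rho-u)/\rho$ and the clean bound $T(\rho)\le 2\rho/\sqrt{F(\rho)}$. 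This exploits the extra hypothesis $f'>0$ of Theorem~\ref{RT1} (not assumed in the paper's general Lemma~\ref{L4}), which is what buys the simplification.
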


In 1994, Wang \cite{ref34} studied the case in which $f^{\prime \prime
}(u)<0 $ for $u>0$, and further proved that the bifurcation curve $S$ of (%
\ref{eq1}) with $f(0^{+})>-\infty $ can take one of three possible shapes:
monotone increasing, monotone decreasing, or $\subset $-shaped. However, in
2000, Castro et al. \cite{ref8} improved Wang's results \cite{ref34} and
further pointed out that the bifurcation curve $S$ cannot be monotone
increasing, see Theorem \ref{RT2}.

\begin{theorem}[{\protect\cite[Theorems 1 and 2]{ref8}}]
\label{RT2}Consider (\ref{eq1}) with $\beta _{2}=\infty $. Assume that%
\begin{equation*}
-\infty <f(0^{+})<0\text{, \ }f^{\prime }(u)\geq 0\text{ and }f^{\prime
\prime }(u)<0\text{ for }u>0\text{.}
\end{equation*}
Then the following statements (i)--(iii) hold:

\begin{itemize}
\item[(i)] Assume that $\lim\limits_{u\rightarrow \infty }f^{\prime }(u)>0$
and 
\begin{equation}
\left( \frac{f(u)}{u}\right) ^{\prime }>0\text{ \ for all }u>0.  \label{b1}
\end{equation}%
Then there exist $\lambda ^{\ast }>\lambda _{\ast }>0$ such that the
bifurcation curve $S$ is monotone decreasing, starts from $(\lambda ^{\ast
},\eta )$ and goes to $\left( \lambda _{\ast },\infty \right) $, see Figure %
\ref{fig1-c}(i).

\item[(ii)] Assume that $\lim\limits_{u\rightarrow \infty }f^{\prime }(u)>0$
and there exists $\gamma >0$ such that 
\begin{equation}
\gamma f^{\prime }(\gamma )=f(\gamma )\text{ \ and \ }\left( \frac{f(u)}{u}%
\right) ^{\prime }\left\{ 
\begin{array}{ll}
>0 & \text{for }u\in (0,\gamma )\text{,} \\ 
<0 & \text{for }u\in (\gamma ,\infty ).%
\end{array}%
\right.  \label{b2}
\end{equation}%
Then there exist $\lambda ^{\ast }>0$ and $\lambda _{\ast }>0$ such that the
bifurcation curve $S$ is $\subset $-shaped, starts from $(\lambda ^{\ast
},\eta )$ and goes to $\left( \lambda _{\ast },\infty \right) $, see Figure %
\ref{fig1-c}(ii).

\item[(iii)] Assume that $\lim\limits_{u\rightarrow \infty }f^{\prime }(u)=0$%
. Then there exists $\lambda ^{\ast }>0$ such that the bifurcation curve $S$
is $\subset $-shaped, starts from $(\lambda ^{\ast },\eta )$ and goes to $%
\left( \infty ,\infty \right) $, see Figure \ref{fig1-c}(iii).
\end{itemize}
\end{theorem}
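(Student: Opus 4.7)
The plan is to reduce the claim to a one-variable time-map analysis. Any positive solution $u_{\lambda }$ of (\ref{eq1}) is symmetric about $0$ with $u_{\lambda }(0)=\rho :=\Vert u_{\lambda }\Vert _{\infty }$ and $u_{\lambda }^{\prime }(0)=0$; multiplying $-u^{\prime \prime }=\lambda f(u)$ by $u^{\prime }$ and integrating yields the first integral $(u_{\lambda }^{\prime })^{2}=2\lambda [F(\rho )-F(u_{\lambda })]$. Separating variables and integrating from $x=-1$ to $x=0$ produces
\begin{equation*}
\sqrt{\lambda }=T(\rho ):=\frac{1}{\sqrt{2}}\int_{0}^{\rho }\frac{du}{\sqrt{F(\rho )-F(u)}}.
\end{equation*}
Under (P$_{1}$)--(P$_{2}$), $F$ is negative on $(0,\eta )$, vanishes at $0$ and at $\eta $, and is strictly increasing on $(\beta _{1},\infty )$, so $T$ is well defined precisely on $(\eta ,\infty )$. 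Because $f(0^{+})>-\infty $ and $f(\eta )>0$, the two endpoint singularities of the defining integral at $\rho =\eta $ are integrable, so $T(\eta )$ is a finite positive number that plays the role of $\sqrt{\lambda ^{\ast }}$. The bifurcation curve $S$ is then the graph of $\lambda =T(\rho )^{2}$ over $[\eta ,\infty )$, and the proof reduces to describing the sign of $T^{\prime }(\rho )$ and the behaviour of $T$ at $\rho =\infty $.

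The central computation is to substitute $u=\rho s$, differentiate under the integral sign, and combine the two resulting integrals over a common denominator, which leads to
\begin{equation*}
T^{\prime }(\rho )=\frac{1}{2\sqrt{2}}\int_{0}^{1}\frac{H(\rho )-H(\rho s)}{[F(\rho )-F(\rho s)]^{3/2}}\,ds,\qquad H(u):=2F(u)-uf(u).
\end{equation*}
The identity $H^{\prime }(u)=f(u)-uf^{\prime }(u)=-u^{2}(f(u)/u)^{\prime }$ converts the hypotheses in (\ref{b1}) and (\ref{b2}) on the sign of $(f(u)/u)^{\prime }$ directly into monotonicity information on $H$, which in turn controls the sign of the integrand above.

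In case (i), (\ref{b1}) gives $H^{\prime }<0$ on $(0,\infty )$, so $H$ is strictly decreasing and $H(\rho )-H(\rho s)<0$ uniformly in $s\in (0,1)$ and $\rho >0$; hence $T^{\prime }(\rho )<0$ on $(\eta ,\infty )$, which is the monotone-decreasing shape. The assumption $\lim f^{\prime }(u)>0$ together with concavity yields $f(u)/u$ bounded away from zero for large $u$, and a direct estimate of the substituted integrand shows $T(\rho )\to \sqrt{\lambda _{\ast }}$ finite and positive as $\rho \to \infty $. In case (ii), $H$ has a unique minimum at $\gamma $, so for $\rho \le \gamma $ both $\rho $ and $\rho s$ lie on the decreasing branch of $H$ and $T^{\prime }(\rho )<0$; in particular $T^{\prime }(\rho )<0$ near $\eta $. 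For $\rho $ large, the increasing branch of $H$ on $(\gamma ,\infty )$ together with $\lim f^{\prime }>0$ forces $T^{\prime }(\rho )>0$, so $S$ bends back and terminates at a finite $(\lambda _{\ast },\infty )$. In case (iii) the same sign pattern of $T^{\prime }$ holds, but the stronger assumption $\lim f^{\prime }=0$ implies $f(u)/u\to 0$, whence $T(\rho )\to \infty $ along the right branch, yielding the $\subset $-shape escaping to $(\infty ,\infty )$.

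The main obstacle is proving that $T^{\prime }$ has exactly one sign change in cases (ii) and (iii), i.e.\ the unimodality of $T$. Once $\rho >\gamma $ the integrand of $T^{\prime }(\rho )$ is not sign-definite in $s$, so uniqueness cannot be read off from the monotonicity of $H$ alone. The route I would take is to establish the local-convexity property that $T^{\prime \prime }(\rho _{0})>0$ at every critical point $\rho _{0}$: differentiating the $T^{\prime }$-formula once more, using the stationarity relation $\int_{0}^{1}[H(\rho _{0})-H(\rho _{0}s)][F(\rho _{0})-F(\rho _{0}s)]^{-3/2}ds=0$ to cancel the lower-order terms, and invoking $f^{\prime \prime }<0$ together with the sign structure of $H^{\prime }$ around $\gamma $, should leave an integrand that is strictly positive at $\rho _{0}$. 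Every critical point of $T$ is then a strict local minimum, forcing at most one interior critical point; combined with the boundary limits this pins down the $\subset $-shape. The distinction between $(\lambda _{\ast },\infty )$ and $(\infty ,\infty )$ as the right endpoint of $S$ in (ii) and (iii) then follows directly from the two alternatives $\lim f^{\prime }>0$ versus $\lim f^{\prime }=0$ via an asymptotic estimate of $T(\rho )$.
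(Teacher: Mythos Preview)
Your overall architecture---time map $T(\rho)$, the identity $T'(\rho)\propto\int_0^\rho [H(\rho)-H(u)]\,[F(\rho)-F(u)]^{-3/2}\,du$ with $H(u)=2F(u)-uf(u)$, and the unimodality principle that every critical point of $T$ is a strict local minimum---matches the paper's route exactly (its Lemmas~\ref{L1} and~\ref{L6}; the paper writes $\theta$ for your $H$). In particular your proposed second-derivative computation is what the paper does in Step~1 of Lemma~\ref{L6}: it proves the stronger pointwise inequality $\alpha T''(\alpha)+2T'(\alpha)>0$ on $(\eta,\beta_2)$ directly from $f''<0$, without invoking the stationarity relation.

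There is, however, a genuine gap in your argument for $T'(\rho)<0$ near $\rho=\eta$ in cases (ii) and (iii). You write ``for $\rho\le\gamma$ both $\rho$ and $\rho s$ lie on the decreasing branch of $H$ and $T'(\rho)<0$; in particular $T'(\rho)<0$ near $\eta$.'' This tacitly assumes $\eta\le\gamma$, but nothing in the hypotheses forces that ordering; the paper's Appendix (Section~6) constructs an explicit concave $f$ with $\gamma<\eta$, so that $T$ is not even defined at any $\rho\le\gamma$. This is precisely the flaw the paper identifies in the original proof of \cite[Theorems~1 and~2]{ref8}, and your proposal reproduces it.

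The paper's repair is to abandon the $\rho\le\gamma$ argument and instead evaluate $T'(\eta^{+})$ directly. Up to a positive constant it equals
\[
G=\int_{0}^{\eta}\frac{\theta(\eta)-\theta(u)}{[-F(u)]^{3/2}}\,du ,
\]
and Lemma~\ref{L7} shows $G<0$ whenever $f(0^{+})>-\infty$: near $u=0$ the numerator tends to $\theta(\eta)=-\eta f(\eta)<0$ while $-F(u)\sim -f(0^{+})\,u$, so the integrand behaves like a negative multiple of $u^{-3/2}$ and the integral diverges to $-\infty$. With $T'(\eta^{+})<0$ in hand, your unimodality step and your endpoint analysis of $T(\rho)$ as $\rho\to\infty$ (which are correct and coincide with the paper's Lemma~\ref{L4}) complete the proof.
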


\begin{figure}[h] % float placement: (h)ere, page (t)op, page (b)ottom, other (p)age
  \centering
  % file name: C:/Users/M/Dropbox/工作-Huang/研究資料/黃少遠/投稿中/(2026投到AMEN)-semipostone with concave and geometric concave (R)/投稿中檔案/6 arXIV/fig1-c.bmp
  \includegraphics[width=6.26in,height=2.47in,keepaspectratio]{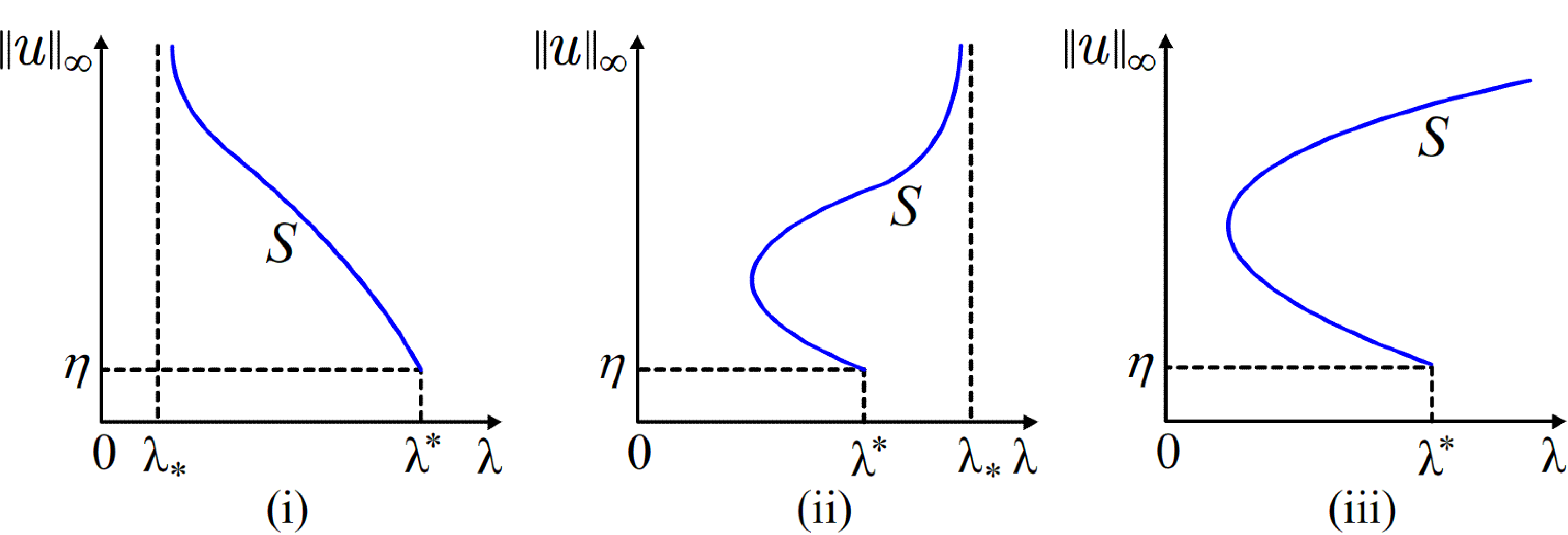}
  \caption{Graphs of bifurcation curves $S$
of (\protect\ref{eq1}) with $f(0^{+})>-\infty $.}
  \label{fig1-c}
\end{figure}

In this paper, we establish certain conditions under which the exact shapes
of the bifurcation curve $S$ can be obtained. As a result, the exact
multiplicity of positive solutions can also be determined. It is worth
mentioning that we demonstrate that our results serve as generalizations of
Theorems \ref{RT1} and \ref{RT2} for when $f$ is convex and concave,
respectively, see Corollary \ref{C2} and Remark \ref{R2} stated below. In
addition, upon careful examination, we have identified the mistake in the
proofs of Theorem \ref{RT2} as presented in \cite[Theorems 1 and 2]{ref8}.
Section 6 provides a detailed discussion on this mistake. Finally, Section 3
provides many examples to illustrate our results.

\section{Main Results}

In this section, we present our main results.

\begin{theorem}
\label{T1}Consider (\ref{eq1}). Let%
\begin{equation}
g(u)\equiv \frac{f(u)}{u}\text{ \ and \ }\hat{\lambda}\equiv \frac{1}{2}%
\left( \int_{0}^{\eta }\frac{1}{\sqrt{-F(u)}}du\right) ^{2}.  \label{L}
\end{equation}%
Then there exists $\kappa \in \lbrack 0,\infty ]$ such that the bifurcation
curve $S$ starts from $(\hat{\lambda},\eta )$ and goes to $\left( \kappa
,\beta _{2}\right) $. Furthermore,

\begin{itemize}
\item[(i)] 
\begin{equation}
\left\{ 
\begin{array}{ll}
\hat{\lambda}\in \left( 0,\infty \right) & \text{if }\lim\limits_{u%
\rightarrow 0^{+}}u^{p}g(u)\in \lbrack -\infty ,0)\text{ for some }p\in
(0,1),\smallskip \\ 
\hat{\lambda}=\infty & \text{if }\lim\limits_{u\rightarrow 0^{+}}g(u)\in
(-\infty ,0]\text{.}%
\end{array}%
\right.  \label{La}
\end{equation}

\item[(ii)] If $\beta _{2}<\infty $, then%
\begin{equation}
\left\{ 
\begin{array}{ll}
\kappa \in (0,\infty ) & \text{if }\lim\limits_{u\rightarrow \beta _{2}^{-}}%
\frac{f(u)}{\left( \beta _{2}-u\right) \left[ -\ln \left( \beta
_{2}-u\right) \right] ^{\tau }}\in (0,\infty ]\text{ for some }\tau >2\text{,%
}\smallskip \\ 
\kappa =\infty & \text{if }\lim\limits_{u\rightarrow \beta _{2}^{-}}\frac{%
f(u)}{\beta _{2}-u}\in \lbrack 0,\infty ).%
\end{array}%
\right.  \label{Lb}
\end{equation}

\item[(iii)] If $\beta _{2}=\infty $, then%
\begin{equation}
\left\{ 
\begin{array}{ll}
\kappa =\infty & \text{if }\lim\limits_{u\rightarrow \infty }g(u)=0\text{,}%
\smallskip \\ 
\kappa \in (0,\infty ) & \text{if }\lim\limits_{u\rightarrow \infty }g(u)\in
(0,\infty )\text{,}\smallskip \\ 
\kappa =0 & \text{if }\lim\limits_{u\rightarrow \infty }g(u)=\infty \text{.}%
\end{array}%
\right.  \label{Lc}
\end{equation}
\end{itemize}
\end{theorem}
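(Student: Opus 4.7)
The plan is to reduce everything to asymptotics of the classical time-mapping integral. For any positive symmetric solution $u_\lambda$ of (\ref{eq1}) with $\rho=u_\lambda(0)=\|u_\lambda\|_\infty$, the first integral $\tfrac{1}{2}(u_\lambda')^{2}+\lambda F(u_\lambda)=\lambda F(\rho)$ followed by separation of variables on $(-1,0)$ yields
\begin{equation*}
\sqrt{\lambda}=T(\rho)\equiv \frac{1}{\sqrt{2}}\int_{0}^{\rho}\frac{du}{\sqrt{F(\rho)-F(u)}}.
\end{equation*}
Positivity of $u_\lambda$ forces $F(\rho)>F(u)$ on $(0,\rho)$; combined with $F(0)=F(\eta)=0$ and the sign structure of $f$ in (P$_1$), this pins $\rho$ to $[\eta,\beta_2)$. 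Hence $S$ is the graph of $\rho\mapsto (T(\rho)^{2},\rho)$ on this interval, the endpoint $\rho=\eta$ producing $\lambda=\hat\lambda$ (matching (\ref{L})) and $\rho\to\beta_2^-$ producing $\lambda\to\kappa$. The remaining work is pure asymptotic analysis of $T(\rho)$ at the two endpoints.

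For (i), the integrand $1/\sqrt{-F(u)}$ in $T(\eta)$ has only a mild square-root singularity at $u=\eta$ since $-F(u)\sim f(\eta)(\eta-u)$ with $f(\eta)>0$, so the question is integrability at $u=0$. If $\lim_{u\to 0^{+}}u^{p}g(u)=L\in[-\infty,0)$ for some $p\in(0,1)$, then for small $u$ one has $f(u)\leq -cu^{1-p}$, hence $-F(u)\geq c'u^{2-p}$, and the integrand is $O(u^{-(2-p)/2})$ with $(2-p)/2<1$, forcing $\hat\lambda\in(0,\infty)$. If instead $\lim_{u\to 0^{+}}g(u)=M\in(-\infty,0]$, then $|F(u)|=O(u^{2})$ (or $o(u^{2})$ when $M=0$), so the integrand dominates $c/u$ near $0$ and $\hat\lambda=\infty$.

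For (ii), I split $T(\rho)$ at $u=\beta_{1}$; the piece on $[0,\beta_{1}]$ is uniformly bounded and converges to $\int_0^{\beta_{1}}du/\sqrt{F(\beta_2)-F(u)}$ by dominated convergence, so the essential work is on $[\beta_{1},\rho]$ where the singularity at $u=\rho$ sharpens as $\rho\to\beta_2^{-}$. Using $F(\rho)-F(u)=\int_u^\rho f(t)\,dt$, an integration-by-parts estimate shows that $f(u)\gtrsim c(\beta_2-u)[-\ln(\beta_2-u)]^{\tau}$ near $\beta_2$ with $\tau>2$ yields $F(\rho)-F(u)\gtrsim c'(\beta_2-u)^{2}[-\ln(\beta_2-u)]^{\tau}$ on an appropriate subinterval, so the integrand is bounded by $C/\{(\beta_2-u)[-\ln(\beta_2-u)]^{\tau/2}\}$, which is integrable since $\tau/2>1$; hence $\kappa<\infty$. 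Conversely, $f(u)/(\beta_2-u)$ bounded near $\beta_2$ gives $F(\rho)-F(u)\leq C[(\beta_2-u)^{2}-(\beta_2-\rho)^{2}]$ on $(\rho-\delta,\rho)$, yielding a logarithmic divergence of $T(\rho)$ as $\rho\to\beta_2^{-}$, so $\kappa=\infty$. Extracting the sharp $\tau>2$ threshold from only a one-sided hypothesis on $f$ is the principal technical obstacle.

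For (iii) with $\beta_2=\infty$, the substitution $u=\rho v$ converts $T(\rho)^{2}$ into $\tfrac{1}{2}\bigl(\int_0^{1}dv/\sqrt{h_\rho(v)}\bigr)^{2}$ where $h_\rho(v)=[F(\rho)-F(\rho v)]/\rho^{2}$. When $g(u)\to L\in(0,\infty)$, a L'Hôpital argument gives $F(u)/u^{2}\to L/2$, so $h_\rho(v)\to L(1-v^{2})/2$ pointwise on $[0,1)$; a uniform lower bound on $h_\rho$ from the positivity of $g$ at infinity enables dominated convergence and yields $T(\rho)^{2}\to \pi^{2}/(4L)\in(0,\infty)$. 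When $g(u)\to 0$, pointwise $h_\rho(v)\to 0$ and Fatou's lemma forces $T(\rho)\to\infty$, so $\kappa=\infty$; when $g(u)\to\infty$, $h_\rho\to\infty$ on $(0,1)$ and the rescaled integrand shrinks uniformly on compact subsets, so dominated convergence gives $T(\rho)\to 0$ and $\kappa=0$.
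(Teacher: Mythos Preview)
Your reduction to the time map $T(\rho)$ and the identification $\hat\lambda=T(\eta^{+})^{2}$, $\kappa=T(\beta_2^{-})^{2}$ is exactly the paper's setup, and your treatment of (i) is essentially the argument of Lemma~\ref{L3}. The substantive differences lie in (ii) and (iii).

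For (ii) the paper gives no self-contained argument: Lemma~\ref{L4}(ii) simply cites \cite{Laetsch} and \cite{Lee2} and omits the proof. Your direct sketch is therefore new content, but the displayed lower bound $F(\rho)-F(u)\gtrsim c'(\beta_2-u)^{2}[-\ln(\beta_2-u)]^{\tau}$ is literally false at $u=\rho$ (the left side vanishes), so the phrase ``on an appropriate subinterval'' is carrying real weight. The clean fix is to write $F(\rho)-F(u)\geq c\,[G(\beta_2-u)-G(\beta_2-\rho)]$ with $G(s)=\int_0^{s}t(-\ln t)^{\tau}\,dt$ and split the resulting $s$-integral at $s=2(\beta_2-\rho)$; both pieces are then uniformly bounded precisely when $\tau>2$.

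For (iii) the paper's Lemma~\ref{L4}(i) avoids dominated convergence altogether: it manufactures explicit two-sided bounds via comparison with $\int du/\sqrt{\alpha^{2}-u^{2}}$ in the $g\to 0$ and $g\to L$ cases, and in the $g\to\infty$ case uses the eventual concavity of $t\mapsto 1-F(\alpha t)/F(\alpha)$ (coming from $f'>0$ for large $u$, itself deduced from $g\to\infty$) to build a piecewise-linear lower barrier. Your rescaled Fatou/DCT route is slicker and even yields the sharper conclusion $\kappa=\pi^{2}/(4L)$ in the middle case, but the $g\to\infty$ step needs more than you wrote: the claim $h_\rho(v)\to\infty$ is a difference of two diverging quantities and requires a mean-value argument ($h_\rho(v)\geq v(1-v)\,g(\xi)$ for some $\xi\in(\rho v,\rho)$), and the dominating function near $v=1$ must be exhibited explicitly (it exists because $g$ is eventually bounded below). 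None of this is fatal; it is precisely where the paper's explicit-bounds approach trades elegance for not having to chase a dominator.
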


\begin{remark}
In Theorem \ref{T1}, if the starting point $(\hat{\lambda},\eta )$ is
finite, then it lies on the bifurcation curve $S.$ Similarly, if the ending
point $\left( \kappa ,\beta _{2}\right) $ is finite, then it also lies on
the bifurcation curve $S$, cf. \cite{AW, Hung2, Hung3, ref31}.
\end{remark}

In order to obtain the exact shape of the bifurcation curve $S$, we set the
following conditions:

\begin{description}
\item[(H$_{1}$)] $g^{\prime }(u)>0$ for $0<u<\beta _{2}.$

\item[(H$_{2}$)] There exists $\sigma >0$ such that $g^{\prime }(u)>0$ for $%
0<u<\sigma $, $g^{\prime }(\sigma )=0$ and $g^{\prime }(u)<0$ for $\sigma
<u<\beta _{2}.$

\item[(H$_{3}$)] $f$ is a geometrically concave function on $\left( \sigma
,\beta _{2}\right) $, that is, 
\begin{equation*}
f(\sqrt{ab})\geq \sqrt{f(a)f(b)}\text{ \ for }a,b\in \left( \sigma ,\beta
_{2}\right) \text{.}
\end{equation*}

\item[(H$_{4}$)] $f$ is a concave function on $\left( 0,\beta _{2}\right) $,
that is, $f^{\prime \prime }(u)<0$ for $0<u<\beta _{2}.$
\end{description}

\begin{remark}
\label{R1}

\begin{itemize}
\item[(i)] The conditions (H$_{3}$) and (H$_{4}$) represent two distinct
concepts. The geometrically concave function may have more than one
inflection point. To facilitate the verification of condition (H$_{3}$),
Hung \cite{Hung} proved that 
\begin{equation}
\text{(H}_{3}\text{) holds \ if and only if \ }\left[ \frac{uf^{\prime }(u)}{%
f(u)}\right] ^{\prime }\leq 0\text{ \ for }\sigma <u<\beta _{2}.  \label{h3a}
\end{equation}

\item[(ii)] Assume that (H$_{4}$) holds. By Lemma \ref{L1}(iii) stated
below, we find that either (H$_{1}$) or (H$_{2}$) holds. Furthermore,%
\begin{equation*}
\left\{ 
\begin{array}{l}
\text{(H}_{1}\text{) holds \ if and only if }\lim\limits_{u\rightarrow \beta
_{2}^{-}}\left[ f(u)-uf^{\prime }(u)\right] \leq 0, \\ 
\text{(H}_{2}\text{) holds \ if and only if }\lim\limits_{u\rightarrow \beta
_{2}^{-}}\left[ f(u)-uf^{\prime }(u)\right] >0.%
\end{array}%
\right.
\end{equation*}
\end{itemize}
\end{remark}

\begin{theorem}
\label{T2}Consider (\ref{eq1}). Let%
\begin{equation}
G\equiv \int_{0}^{\eta }\frac{-\eta f(\eta )-2F(u)+uf(u)}{[-F(u)]^{3/2}}du%
\text{.}  \label{G}
\end{equation}%
Then the following statements (i)--(ii) hold.

\begin{itemize}
\item[(i)] Assume that (H$_{1}$) holds. Then the bifurcation curve $S$ is
monotone decreasing.

\item[(ii)] Assume that either ((H$_{2}$) and (H$_{3}$)) or ((H$_{2}$) and (H%
$_{4}$)) holds. Then the bifurcation curve $S$ is $\subset $-shaped if $G<0$%
, and is monotone increasing if $G\geq 0$.
\end{itemize}
\end{theorem}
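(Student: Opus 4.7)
The plan is to analyze the bifurcation curve $S$ through the classical time-map (quadrature) method. First I would multiply $-u''=\lambda f(u)$ by $u'$, integrate, and invoke the boundary conditions and symmetry to reduce the problem to a one-parameter family: each positive solution $u_\lambda$ has norm $\rho:=\|u_\lambda\|_\infty=u_\lambda(0)\in(\eta,\beta_2)$ and satisfies $\sqrt{2\lambda}=\phi(\rho):=\int_0^\rho [F(\rho)-F(u)]^{-1/2}\,du$, so $\lambda(\rho)=\phi(\rho)^2/2$ parametrises $S$ and its shape is governed by the sign of $\phi'(\rho)$. Using the substitution $u=\rho s$ and differentiation under the integral, I would derive the identity
\[
2\rho\,\phi'(\rho) \;=\; \int_0^\rho \frac{h(u)-h(\rho)}{\bigl[F(\rho)-F(u)\bigr]^{3/2}}\,du,\qquad h(u):=uf(u)-2F(u),
\]
with $h'(u)=u^2 g'(u)$; letting $\rho\to\eta^+$ together with $F(\eta)=0$ gives $2\eta\,\phi'(\eta^+)=G$, so the sign of $G$ records the direction in which $S$ leaves the starting point $(\hat\lambda,\eta)$.

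For case (i), under (H$_1$) the relation $h'(u)=u^2 g'(u)>0$ forces $h$ to be strictly increasing on $(0,\beta_2)$, so $h(u)-h(\rho)<0$ for every $u\in(0,\rho)$ and every $\rho\in(\eta,\beta_2)$. The integrand above is therefore strictly negative, giving $\phi'(\rho)<0$ throughout, hence $\lambda(\rho)$ is strictly decreasing and $S$ is monotone decreasing.

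For case (ii), under (H$_2$) the function $h$ is strictly increasing on $(0,\sigma)$, strictly decreasing on $(\sigma,\beta_2)$, and attains its unique maximum at $\sigma$. Consequently the integrand in $2\rho\phi'(\rho)$ now has mixed sign and $\phi'(\rho)$ can be of either sign a priori. The sign of $G=2\eta\phi'(\eta^+)$ settles the initial direction, and I would next establish that $\phi'(\rho)>0$ for $\rho$ close to $\beta_2$ by analysing the dominant contribution from $u\in(\sigma,\rho)$, where $h(u)-h(\rho)>0$. To pass from these boundary signs to the global shape, the crucial claim is a single-crossing property: $\phi'$ has at most one zero in $(\eta,\beta_2)$, and at any such zero $\rho_0$ one has $\phi''(\rho_0)>0$. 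Together with the boundary behaviour, this yields the two alternatives: when $G<0$, exactly one interior critical point of $\lambda(\rho)$ exists and $S$ is $\subset$-shaped; when $G\ge 0$, no such zero exists and $S$ is monotone increasing.

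The hard part will be establishing the single-crossing property, and this is where either (H$_3$) or (H$_4$) enters decisively. Under (H$_3$), I would exploit the equivalent formulation \eqref{h3a}, namely $(uf'/f)'\le 0$ on $(\sigma,\beta_2)$, by first performing an integration by parts that regularises the $(F(\rho_0)-F(u))^{-3/2}$ singularity and then using the geometric-concavity inequality to sign the resulting integrand at a putative critical point $\rho_0>\sigma$. Under (H$_4$) one instead has $f''<0$ on $(0,\beta_2)$ together with the structural dichotomy recorded in Remark~\ref{R1}(ii); classical concavity manipulations then yield the analogous strict inequality $\phi''(\rho_0)>0$. In both cases the conclusion is that $\phi'$ cannot revisit the value $0$, which combined with the previous paragraph completes the proof. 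This pointwise sign computation at critical points is where the bulk of the technical work lies.
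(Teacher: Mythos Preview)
Your overall strategy---reduce to the time map $T(\alpha)$ (your $\phi$), express $T'(\alpha)$ as an integral of $\theta(\alpha)-\theta(u)$ with $\theta=-h$, handle (i) by monotonicity of $\theta$, and for (ii) show that any critical point of $T$ is a strict local minimum---matches the paper. Part (i) is correct as stated.

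For part (ii) there are two genuine gaps.

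First, your claim that $\phi'(\rho)>0$ for $\rho$ near $\beta_2$ ``by analysing the dominant contribution from $u\in(\sigma,\rho)$'' is not a proof, and the paper does \emph{not} establish this in general. The paper splits on the sign of $\theta(\beta_2^-)$. When $\theta(\beta_2^-)>0$ there is a zero $\rho_*\in(\sigma,\beta_2)$ of $\theta$, and for every $\alpha\ge\rho_*$ one has $\theta(\alpha)-\theta(u)>0$ for all $u\in(0,\alpha)$, so $T'(\alpha)>0$ by a direct sign check---no asymptotic ``dominant contribution'' argument is needed or used. When $\theta(\beta_2^-)\le 0$ (possible only if $\beta_2=\infty$, under (H$_2$)--(H$_3$)), the paper does \emph{not} prove $T'>0$ near infinity; instead it shows $\lim_{\alpha\to\infty}T(\alpha)=\infty$ (Lemma~\ref{L5.5}), and this, together with the single-critical-point property, forces the shape. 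Your heuristic misses this dichotomy.

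Second, under (H$_3$) the single-crossing property is not obtained via ``an integration by parts that regularises the singularity.'' The paper (Lemma~\ref{L5}, following Hung) forms the specific linear combination
\[
T''(\alpha)+\frac{3+q(\alpha)}{\alpha}\,T'(\alpha),\qquad q(\alpha):=-\frac{\alpha\,\theta'(\alpha)}{\theta(\alpha)}>0\ \text{ on }(\sigma,\xi),
\]
and shows it is strictly positive by reducing to the sign of $Q(\alpha)-Q(u)$ with $Q(u)=u\theta'(u)+q(\alpha)\theta(u)$; the decisive step is proving $q'>0$ on $(\sigma,\xi)$, and this is exactly where the geometric-concavity criterion \eqref{h3a} enters (through a nontrivial case analysis). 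There is no integration by parts; the introduction of the \emph{variable} coefficient $q(\alpha)$ is the key, non-obvious idea you are missing. Under (H$_4$) the paper's argument is the simpler fixed-coefficient inequality $\alpha T''(\alpha)+2T'(\alpha)>0$ on $(\eta,\beta_2)$, obtained from $\partial_u[2A-2B-C]=u^2 f''(u)<0$, which forces $2A-2B-C>0$ for $0<u<\alpha$; ``classical concavity manipulations'' is an accurate but uninformative description of this.
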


\begin{remark}
\label{R2}By Theorems \ref{T1} and \ref{T2}, we obtain the exact shape of
the bifurcation curve $S$, including the positions of its starting and
ending points. From these results, we can understand the exact multiplicity
of positive solutions. Since there are many possibilities, we will not list
them all here.
\end{remark}

\begin{corollary}
\label{C0}Consider (\ref{eq1}) with $\beta _{2}<\infty $. Assume that (H$%
_{4} $) holds. Then the bifurcation curve $S$ is $\subset $-shaped if $G<0$,
and is monotone increasing if $G\geq 0$.
\end{corollary}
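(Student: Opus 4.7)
The plan is to deduce Corollary~\ref{C0} directly from Theorem~\ref{T2}(ii) by verifying that, under the hypotheses $\beta_2<\infty$ and (H$_4$), condition (H$_2$) must be the alternative that holds in Remark~\ref{R1}(ii). Once this is established, the conclusion of Corollary~\ref{C0} is exactly the conclusion of Theorem~\ref{T2}(ii) applied with (H$_2$) and (H$_4$).

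First, I would record that since $f\in C^{2}(0,\infty)$ with $f>0$ on $(\beta_1,\beta_2)$ and $f<0$ on $(\beta_2,\infty)$, continuity forces $f(\beta_2)=0$. Next, I would argue that $f'(\beta_2^{-})<0$ strictly. By (H$_4$), $f$ is concave on $(0,\beta_2)$, so the tangent-line inequality at $\beta_2$ gives
\begin{equation*}
f(u)\;\le\; f(\beta_2)+f'(\beta_2^{-})(u-\beta_2)\;=\;f'(\beta_2^{-})(u-\beta_2)\qquad\text{for }0<u<\beta_2.
\end{equation*}
If we had $f'(\beta_2^{-})\ge 0$, the right-hand side would be non-positive on $(0,\beta_2)$, contradicting the fact that $f>0$ on $(\beta_1,\beta_2)$. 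Hence $f'(\beta_2^{-})<0$.

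Consequently,
\begin{equation*}
\lim_{u\to \beta_2^{-}}\bigl[f(u)-uf'(u)\bigr] \;=\; f(\beta_2)-\beta_2 f'(\beta_2^{-}) \;=\; -\beta_2 f'(\beta_2^{-})\;>\;0.
\end{equation*}
By Remark~\ref{R1}(ii), this strict inequality is exactly the criterion that picks out (H$_2$) rather than (H$_1$). Thus (H$_2$) holds in addition to (H$_4$), and Theorem~\ref{T2}(ii) applies to yield that $S$ is $\subset$-shaped when $G<0$ and monotone increasing when $G\ge 0$.

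The only non-routine step is the strict inequality $f'(\beta_2^{-})<0$; everything else is bookkeeping. That step, however, is forced quickly by the concavity plus the sign structure at $\beta_2$, so no serious obstacle arises: the corollary is essentially a packaged specialization of Theorem~\ref{T2}(ii) once one observes that finite $\beta_2$ together with (H$_4$) automatically excludes the monotone-decreasing alternative of Theorem~\ref{T2}(i).
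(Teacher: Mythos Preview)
Your proof is correct and follows essentially the same strategy as the paper: verify that (H$_2$) holds under $\beta_2<\infty$ and (H$_4$), then invoke Theorem~\ref{T2}(ii). The only cosmetic difference is that the paper checks (H$_2$) via $\theta(\beta_2)=2F(\beta_2)>0$ together with Lemma~\ref{L1}(iii), whereas you reach the same conclusion by showing $f'(\beta_2)<0$ from the tangent-line inequality and then applying Remark~\ref{R1}(ii); both routes are one-line computations leading to the identical conclusion.
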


Since it is not easy to verify the condition $G<0$, we provide a
sufficiently condition for the condition `$G<0$'.

\begin{theorem}
\label{T0}Consider (\ref{eq1}). Then $G<0$ if one of the following
conditions holds:%
\begin{equation*}
\text{"}f(\eta )-\eta f^{\prime }(\eta )\leq 0\text{", \ "}%
\lim\limits_{u\rightarrow 0^{+}}u^{\frac{1}{3}}f(u)>-\infty \text{" \ and \ "%
}f(0^{+})>-\infty \text{".}
\end{equation*}
\end{theorem}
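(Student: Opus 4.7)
The plan is to analyze the integrand $\psi(u)/[-F(u)]^{3/2}$ of $G$, where $\psi(u)\equiv uf(u)-2F(u)-\eta f(\eta)$, under each of the three sufficient conditions separately. I first record identities used throughout: $\psi(\eta)=0$ (since $F(\eta)=0$); $\psi'(u)=uf'(u)-f(u)=u^{2}g'(u)$ with $g(u)=f(u)/u$, so the hypothesis ``$f(\eta)-\eta f'(\eta)\leq 0$'' is equivalent to $g'(\eta)\geq 0$; and $\psi(u)\to -\eta f(\eta)<0$ as $u\to 0^{+}$, using $uf(u)\to 0$ and $F(0^{+})=0$ which both follow from (P$_{1}$)-(P$_{2}$).

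Suppose first that $\lim_{u\to 0^{+}}u^{1/3}f(u)>-\infty$; this case subsumes $f(0^{+})>-\infty$. Then $|f(u)|\leq K u^{-1/3}$ for $u$ in some $(0,u_{1})$, which on integration gives $|F(u)|\leq (3K/2)u^{2/3}$ and hence $[-F(u)]^{3/2}\leq C u$ near $0$. Combined with $\psi(u)\leq -\eta f(\eta)/2$ on a right-neighbourhood of $0$, the integrand obeys $\psi(u)/[-F(u)]^{3/2}\leq -c/u$ there for some $c>0$, so $\int_{0}^{u_{0}}\psi(u)/[-F(u)]^{3/2}\,du=-\infty$. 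The tail $\int_{u_{0}}^{\eta}\psi(u)/[-F(u)]^{3/2}\,du$ is finite (the singularity at $u=\eta$ is of integrable order $(\eta-u)^{-1/2}$), so $G=-\infty$ and in particular $G<0$. This disposes of conditions (2) and (3).

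For condition (1) alone I may assume $\lim_{u\to 0^{+}}u^{1/3}f(u)=-\infty$ (else the previous argument applies). In that regime $[-F(u)]^{3/2}$ vanishes to order strictly less than $1$ at $0$, so $\Phi(t)\equiv \int_{0}^{t}[-F(u)]^{-3/2}\,du$ is finite for each $t\in(0,\eta)$. From $\psi'(t)=t^{2}g'(t)$ and $\psi(\eta)=0$ one has $\psi(u)=-\int_{u}^{\eta}t^{2}g'(t)\,dt$, and Fubini yields $G=-\int_{0}^{\eta}t^{2}g'(t)\,\Phi(t)\,dt$. An integration by parts transferring the derivative off $g$ and using $\psi(\eta)=0$ to kill the endpoint term at $t=\eta$ should combine the boundary datum $g'(\eta)\geq 0$ with the sign of $\Phi$ to deliver $G\leq 0$; strict inequality then follows from $\psi\not\equiv 0$ (indeed $\psi\equiv 0$ would force $f(u)=cu$, contradicting $\eta f(\eta)>0$). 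The main obstacle is precisely this final integration by parts: $\Phi(t)$ may blow up as $t\to \eta^{-}$ and $g'$ can change sign on $(0,\eta)$, so one must package the computation so that only the endpoint information $g'(\eta)\geq 0$, i.e.\ hypothesis (1), is actually used.
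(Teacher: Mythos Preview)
Your treatment of the second and third conditions is correct and is essentially the paper's argument (Lemma~\ref{L7}, Step~3): from $u^{1/3}f(u)$ bounded below one gets $[-F(u)]^{3/2}=O(u)$ near $0$, while the numerator $\psi(u)\to-\eta f(\eta)<0$, so the integral over $(0,u_0)$ diverges to $-\infty$ and the tail near $\eta$ is integrable of order $(\eta-u)^{-1/2}$; hence $G=-\infty$.

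For the first condition your plan does not work. The claim that $\Phi(t)=\int_0^t[-F(u)]^{-3/2}\,du<\infty$ whenever $u^{1/3}f(u)\to-\infty$ is false: with $f(u)\sim -u^{-1/3}\ln\ln(1/u)$ one gets $-F(u)\sim \tfrac{3}{2}u^{2/3}\ln\ln(1/u)$, hence $[-F(u)]^{3/2}\sim c\,u\bigl(\ln\ln(1/u)\bigr)^{3/2}$, and the substitution $v=\ln(1/u)$ turns $\int_0 du/[-F(u)]^{3/2}$ into $\int^{\infty}dv/(\ln v)^{3/2}=\infty$. Even when $\Phi$ is finite, the single endpoint datum $g'(\eta)\geq 0$ says nothing about the sign of $g'$ on $(0,\eta)$, so an integration by parts cannot by itself produce a sign for $G$. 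The paper handles this case by a pointwise argument that relies on the structural hypotheses (H$_2$) together with (H$_3$) or (H$_4$): its proof of Theorem~\ref{T0} simply invokes Lemma~\ref{L7}, which carries those assumptions. Under them, Lemma~\ref{L1} shows that $\theta'(u)=f(u)-uf'(u)$ changes sign at most once, from negative to positive, on $(0,\beta_2)$. The hypothesis $f(\eta)-\eta f'(\eta)\leq 0$ is exactly $\theta'(\eta)\leq 0$, which then forces $\theta'<0$ on $(0,\eta)$; hence $\theta$ is strictly decreasing there, the integrand $\theta(\eta)-\theta(u)$ is negative on all of $(0,\eta)$, and $G<0$ follows with no asymptotic analysis and no integration by parts.
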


\begin{corollary}
\label{C1}Consider (\ref{eq1}). Assume that $f(0^{+})>-\infty $. Assume that
either ((H$_{2}$) and (H$_{3}$) hold), or ($\beta _{2}<\infty $ and (H$_{4}$%
) holds). Then the bifurcation curve $S$ is $\subset $-shaped.
\end{corollary}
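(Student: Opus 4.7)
The plan is to obtain Corollary \ref{C1} as a direct composition of Theorem \ref{T0} with Theorem \ref{T2}(ii) and Corollary \ref{C0}. The key point is that Theorem \ref{T0} converts the analytic hypothesis $f(0^{+})>-\infty$ into the sign condition $G<0$, and once $G<0$ is known the $\subset$-shaped branch of the subsequent dichotomies is forced.

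First I would observe that the standing assumption $f(0^{+})>-\infty$ is precisely the third of the three alternatives listed in Theorem \ref{T0} (and incidentally also implies the middle one, since $u^{1/3}f(u)\to 0$ whenever $f$ is bounded near $0$). Reading the phrase ``if one of the following conditions holds'' in Theorem \ref{T0} disjunctively, the theorem applies and delivers $G<0$ immediately, with no need to verify either $f(\eta)-\eta f'(\eta)\leq 0$ or the $u^{1/3}f(u)$ condition separately.

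With $G<0$ in hand I would split into the two cases of the corollary. If (H$_{2}$) and (H$_{3}$) hold, the hypotheses of the ``(H$_{2}$) and (H$_{3}$)'' branch of Theorem \ref{T2}(ii) are satisfied, and the dichotomy stated there ($\subset$-shaped when $G<0$, monotone increasing when $G\geq 0$), together with the sign $G<0$ just derived, gives that $S$ is $\subset$-shaped. If instead $\beta_{2}<\infty$ and (H$_{4}$) hold, the identical dichotomy is furnished by Corollary \ref{C0} (which does not even require one to first verify (H$_{2}$)), and again $G<0$ forces $S$ to be $\subset$-shaped.

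I do not foresee a genuine obstacle: all the substantive analysis, namely the sharp sufficient conditions for $G<0$ in Theorem \ref{T0} together with the two $G$-sign dichotomies in Theorem \ref{T2}(ii) and Corollary \ref{C0}, has already been carried out. Corollary \ref{C1} is essentially a packaging result that isolates the case most common in applications, namely $f$ with finite limit at $0^{+}$, and the only care required is the disjunctive reading of Theorem \ref{T0}'s hypothesis list.
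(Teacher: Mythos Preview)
Your proposal is correct and follows essentially the same route as the paper: invoke Theorem~\ref{T0} to get $G<0$ from $f(0^{+})>-\infty$, then feed this into Theorem~\ref{T2}(ii) for the (H$_{2}$)+(H$_{3}$) case and into Corollary~\ref{C0} for the $\beta_{2}<\infty$+(H$_{4}$) case. The paper's proof is just the terse version of what you wrote.
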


\begin{remark}
\label{R3}Assume that $\beta _{2}=\infty $, $f(0^{+})>-\infty $ and (H$_{4}$%
) holds. By Theorem \ref{T0}, we obtain that $G<0$. From the following
observations, we find that Theorems \ref{T1} and \ref{T2} can derive Theorem %
\ref{RT2}.

\begin{itemize}
\item[(i)] $\lim\limits_{u\rightarrow \infty }f^{\prime }(u)>0$ and (\ref{b1}%
) holds if and only if 
\begin{equation*}
\lim\limits_{u\rightarrow \infty }g(u)\in (0,\infty )\text{ \ and \ (H}_{1}%
\text{) holds.}
\end{equation*}%
So Theorem \ref{RT2}(i) can be obtained by Theorems \ref{T1} and \ref{T2}(i).

\item[(ii)] $\lim\limits_{u\rightarrow \infty }f^{\prime }(u)>0$ and (\ref%
{b2}) holds if and only if 
\begin{equation*}
\lim\limits_{u\rightarrow \infty }g(u)\in (0,\infty )\text{ \ and \ (H}_{2}%
\text{) holds.}
\end{equation*}%
So Theorem \ref{RT2}(ii) can be obtained by Theorems \ref{T1} and \ref{T2}%
(ii).

\item[(iii)] Assume that $\lim\limits_{u\rightarrow \infty }f^{\prime }(u)=0$%
. By Lemma \ref{L1}(iii) stated below, we obtain that%
\begin{equation*}
\lim\limits_{u\rightarrow \infty }g(u)=0\text{ \ and \ (H}_{4}\text{) holds.}
\end{equation*}%
So Theorem \ref{RT2}(iii) can be obtained by Theorems \ref{T1} and \ref{T2}%
(ii).
\end{itemize}
\end{remark}

\begin{remark}
Assume that (H$_{4}$) holds. By Theorem \ref{RT2}, it is impossible that the
corresponding curve is monotone increasing in the case where $%
f(0^{+})>-\infty $. But by Theorem \ref{T2}, it may be possible for $%
f(0^{+})=-\infty $, see Example \ref{E3}.
\end{remark}

\begin{corollary}
\label{C2}Consider (\ref{eq1}) with $\beta _{2}=\infty $. If $f^{\prime
\prime }(u)>0$ on $\left( 0,\infty \right) $, then the bifurcation curve $S$
is monotone decreasing, starts from $(\hat{\lambda},\eta )$ and goes to $%
\left( \kappa ,\infty \right) $ where%
\begin{equation}
\hat{\lambda}\left\{ 
\begin{array}{ll}
=\infty & \text{if }f(0^{+})=0\text{,} \\ 
\in (0,\infty ) & \text{if }f(0^{+})<0,%
\end{array}%
\right. \text{ \ and \ }\kappa \left\{ 
\begin{array}{ll}
=0 & \text{if }\lim\limits_{u\rightarrow \infty }f^{\prime }(u)=\infty \text{%
,} \\ 
\in (0,\infty ) & \text{if }\lim\limits_{u\rightarrow \infty }f^{\prime
}(u)\in (0,\infty ),%
\end{array}%
\right.  \label{c2a}
\end{equation}%
see Figure \ref{figCT2}. 

\begin{figure}[h] % float placement: (h)ere, page (t)op, page (b)ottom, other (p)age
  \centering
  % file name: C:/Users/M/Dropbox/工作-Huang/研究資料/黃少遠/投稿中/(2026投到AMEN)-semipostone with concave and geometric concave (R)/投稿中檔案/6 arXIV/figCT2.bmp
  \includegraphics[width=4.58in,height=4.69in,keepaspectratio]{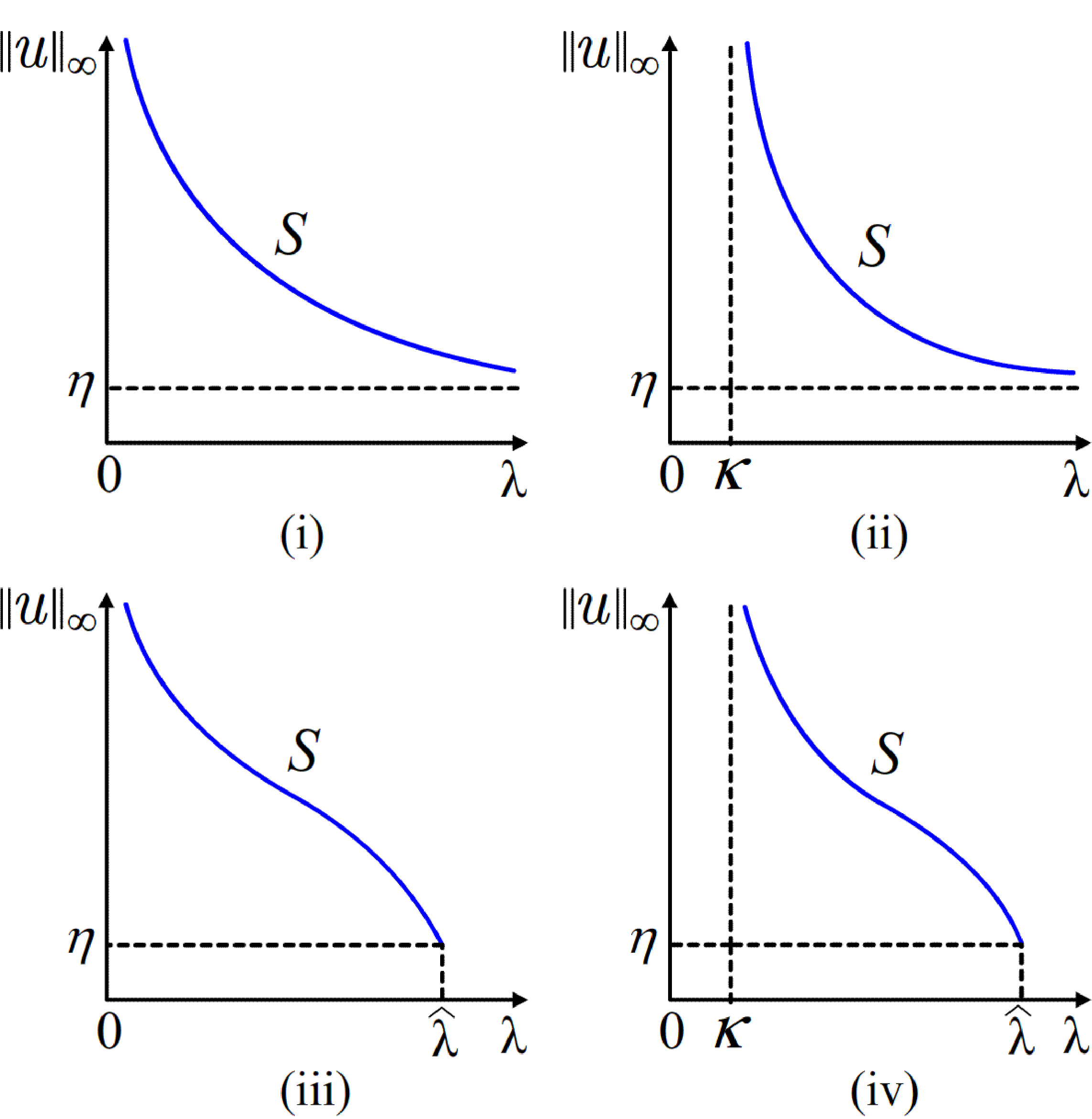}
  \caption{Graphs
of the bifurcation curve $S$ if $f^{\prime \prime }(u)>0$ on $\left(
0,\infty \right) $. (i) $f(0^{+})=0$ and $\lim\limits_{u\rightarrow \infty
}f^{\prime }(u)=\infty $. (ii) $f(0^{+})=0$ and $\lim\limits_{u\rightarrow
\infty }f^{\prime }(u)\in (0,\infty )$. (iii) $f(0^{+})<0$ and $%
\lim\limits_{u\rightarrow \infty }f^{\prime }(u)=\infty $. (iv) $f(0^{+})<0$
and $\lim\limits_{u\rightarrow \infty }f^{\prime }(u)\in (0,\infty ).$}
  \label{figCT2}
\end{figure}
\end{corollary}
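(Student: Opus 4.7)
The plan is to deduce Corollary \ref{C2} directly from Theorems \ref{T1} and \ref{T2}(i) by verifying that strict convexity $f''>0$ forces (H$_{1}$), and then reading off the endpoints from the dichotomies (\ref{La}) and (\ref{Lc}) via L'H\^{o}pital. The main technical content is the verification of (H$_{1}$); the endpoint identification is then routine bookkeeping.

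For (H$_{1}$), I would argue as follows. Since $f''>0$, $f'$ is strictly increasing on $(0,\infty)$, so for any fixed $u_{0}>0$ and $u\in(0,u_{0})$,
\[
f(u) = f(u_{0}) - \int_{u}^{u_{0}} f'(v)\,dv \;\geq\; f(u_{0}) - f'(u_{0})\,u_{0},
\]
which shows $f(0^{+})$ is finite; combined with (P$_{1}$) this forces $f(0^{+})\leq 0$. Set $\tilde f(u)\equiv f(u)-f(0^{+})$, so $\tilde f$ is strictly convex on $(0,\infty)$ with $\tilde f(0^{+})=0$. The standard chord-from-origin inequality for strictly convex functions through the origin then shows $\tilde f(u)/u$ is strictly increasing on $(0,\infty)$, and differentiating gives $uf'(u)-f(u)>-f(0^{+})\geq 0$; dividing by $u^{2}$ yields $g'(u)>0$, i.e.\ (H$_{1}$). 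Theorem \ref{T2}(i) then immediately produces a monotone decreasing $S$, and Theorem \ref{T1} places its endpoints at $(\hat\lambda,\eta)$ and $(\kappa,\infty)$.

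To identify $\hat\lambda$, I apply Theorem \ref{T1}(i). If $f(0^{+})<0$, then $g(u)=f(u)/u\to-\infty$ as $u\to 0^{+}$, and for any $p\in(0,1)$ one has $u^{p}g(u)=u^{p-1}f(u)\to -\infty\in[-\infty,0)$, giving $\hat\lambda\in(0,\infty)$ by the first branch of (\ref{La}). If $f(0^{+})=0$, monotonicity of $f'$ guarantees that the limit $f'(0^{+})$ exists, and L'H\^{o}pital's rule gives $\lim_{u\to 0^{+}}g(u)=f'(0^{+})\in(-\infty,0]$ (nonpositive by (P$_{1}$) and finite under the regularity implicit in Figure \ref{figCT2}); the second branch of (\ref{La}) then yields $\hat\lambda=\infty$. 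To identify $\kappa$, I apply Theorem \ref{T1}(iii): L'H\^{o}pital on $f/u$ at infinity gives $\lim_{u\to\infty}g(u)=\lim_{u\to\infty}f'(u)$, so $\lim f'=\infty$ yields $\kappa=0$ while $\lim f'\in(0,\infty)$ yields $\kappa\in(0,\infty)$.

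The main obstacle is verifying (H$_{1}$) uniformly on $(0,\infty)$, and in particular on $(0,\beta_{1})$ where $f$ itself is negative; the reduction to the normalized function $\tilde f$ passing through the origin is the key trick that makes the convexity argument go through cleanly without a case analysis on the sign or monotonicity of $f$.
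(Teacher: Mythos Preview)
Your overall plan matches the paper's exactly: verify (H$_{1}$), invoke Theorem~\ref{T2}(i) for the monotone-decreasing shape, and then read off the endpoints from Theorem~\ref{T1} via L'H\^{o}pital. The only substantive difference is in how (H$_{1}$) is established. The paper works with $\theta'(u)=f(u)-uf'(u)$, notes $\theta''(u)=-uf''(u)<0$, and then proves $\theta'(0^{+})\le 0$ by first observing $-\infty<f(0^{+})\le 0$ and then arguing by contradiction that $\lim_{u\to 0^{+}}uf'(u)\ge 0$ (otherwise $f'(u)<-\varepsilon/u$ near $0$ would force $f(0^{+})=-\infty$); strict monotonicity of $\theta'$ then gives $\theta'<0$, i.e.\ $g'>0$. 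Your normalized-function route via $\tilde f=f-f(0^{+})$ is a legitimate alternative that sidesteps the contradiction argument, but one step needs tightening: from ``$\tilde f(u)/u$ strictly increasing'' you cannot literally ``differentiate to get a strict inequality''---strict monotonicity only yields a nonnegative derivative. The fix is immediate: extend $\tilde f$ continuously to $[0,\infty)$ with $\tilde f(0)=0$, apply the mean value theorem on $[0,u]$ to write $\tilde f(u)/u=\tilde f'(\zeta)$ for some $\zeta\in(0,u)$, and use $\tilde f''>0$ to conclude $\tilde f'(\zeta)<\tilde f'(u)$, hence $u\tilde f'(u)-\tilde f(u)>0$ directly.

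Your endpoint analysis is in fact more scrupulous than the paper's. The paper simply writes ``by Theorem~\ref{T1}, we obtain (\ref{c2a})'' after computing $\lim_{u\to\infty}g(u)=\lim_{u\to\infty}f'(u)$, and never explicitly checks the $u\to 0^{+}$ hypotheses in (\ref{La}). You correctly flag that the branch $f(0^{+})=0$ requires $\lim_{u\to 0^{+}}g(u)\in(-\infty,0]$, which would fail if $f'(0^{+})=-\infty$ (e.g.\ $f(u)=u-2\sqrt{u}$ near $0$); in that scenario (\ref{La}) actually gives $\hat\lambda\in(0,\infty)$, not $\hat\lambda=\infty$, so the dichotomy in (\ref{c2a}) as stated is incomplete and your caveat is warranted.
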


\begin{remark}
\label{R4}Assume that $\beta _{2}=\infty $ and $f^{\prime \prime }(u)>0$ on $%
\left( 0,\infty \right) $. Obviously, Corollary \ref{C2} can obtain more
possibilities than Theorem \ref{RT1}. Furthermore, since the condition (\ref%
{h1}) implies that%
\begin{equation*}
f(0^{+})<0\text{ \ and }\lim_{u\rightarrow \infty }f^{\prime }(u)=\infty ,
\end{equation*}%
the shape of the corresponding bifurcation curve in Theorem \ref{RT1}
corresponds to the shape in Figure \ref{figCT2}(iii). Thus Corollary \ref{C2}
is the generalization of Theorem \ref{RT1}.
\end{remark}

\section{Examples}

In this section, we give many examples. Examples \ref{E1}--\ref{E4}
represent singular semipositone problems, and Examples \ref{E5}--\ref{E9}
represent nonsingular semipositone problems.

In Examples \ref{E1}--\ref{E3}, the corresponding nonlinearities $f$ are
concave and $\beta _{2}=\infty $.

\begin{example}
\label{E1}Consider (\ref{eq1}) with $f(u)=\ln u$. Clearly, $f(0^{+})=-\infty 
$ and $\beta _{2}=\infty $. Then the bifurcation curve $S$ is $\subset $%
-shaped, starts from $(\hat{\lambda},e)$ and goes to $\left( \infty ,\infty
\right) $. where $\hat{\lambda}\approx 8.539$.

Next, we prove the above statement. We compute%
\begin{equation*}
f^{\prime \prime }(u)=-\frac{1}{u^{2}}<0\text{ \ for }u>0\text{ \ and \ }%
F(u)=u\ln u-u.
\end{equation*}%
Then (P$_{1}$)--(P$_{2}$) and (H$_{4}$) hold, and we obtain $\eta =e$. By L'H%
\"{o}pital's rule, we compute%
\begin{equation*}
\lim\limits_{u\rightarrow 0^{+}}\sqrt{u}g(u)=\lim\limits_{u\rightarrow 0^{+}}%
\frac{\ln (u)}{\sqrt{u}}=-\infty \text{ \ and \ }\lim\limits_{u\rightarrow
\infty }g(u)=\lim\limits_{u\rightarrow \infty }\frac{1}{u}=0.
\end{equation*}%
So by Theorem \ref{T1}, we obtain 
\begin{equation}
\hat{\lambda}\in (0,\infty )\text{ \ and \ }\kappa =\infty .  \label{E1a}
\end{equation}%
Since 
\begin{equation*}
\lim\limits_{u\rightarrow \infty }\left[ f(u)-uf^{\prime }(u)\right]
=\lim\limits_{u\rightarrow \infty }\left( \ln u-1\right) =\infty >0,
\end{equation*}%
and by Remark \ref{R1}(ii), we see that (H$_{2}$) holds. Since%
\begin{equation*}
\lim_{u\rightarrow 0^{+}}u^{\frac{1}{3}}f(u)=\lim_{u\rightarrow 0^{+}}\left(
u^{\frac{1}{3}}\ln u\right) =0,
\end{equation*}%
and by Theorem \ref{T0}, we obtain that $G<0$. So by Theorems \ref{T1}--\ref%
{T2}(ii) and (\ref{E1a}), the bifurcation curve $S$ is $\subset $-shaped,
starts from $(\hat{\lambda},e)$ and goes to $\left( \infty ,\infty \right) $%
. Finally, we apply Maple software to obtain $\hat{\lambda}\approx 8.539$.
This proof is complete.
\end{example}

\begin{example}
\label{E2}Consider (\ref{eq1}) with $f(u)=\sigma u-u^{-p}$, where $\sigma >0$
and $0<p<1$. Clearly, $f(0^{+})=-\infty $ and $\beta _{2}=\infty $. Then the
bifurcation curve $S$ is monotone decreasing, starts from $(\hat{\lambda}%
,\eta )$ and goes to $\left( \kappa ,\infty \right) $ where 
\begin{equation}
\eta =\left[ \frac{2}{\sigma \left( 1-p\right) }\right] ^{\frac{1}{p+1}}%
\text{ \ and \ }\kappa >0.  \label{E2a}
\end{equation}

Next, we prove the above statement. We compute%
\begin{equation*}
F(u)=\frac{\sigma }{2}u^{1-p}\left[ u^{p+1}-\frac{2}{\sigma \left(
1-p\right) }\right] .
\end{equation*}%
Then (P$_{1}$)--(P$_{2}$) hold, and we obtain $\eta $ defined by (\ref{E2a}%
). Since%
\begin{equation*}
\lim\limits_{u\rightarrow 0^{+}}\sqrt{u}g(u)=\lim\limits_{u\rightarrow 0^{+}}%
\frac{\sigma u-u^{-p}}{\sqrt{u}}=-\infty \text{ \ and \ }\lim\limits_{u%
\rightarrow \infty }g(u)=\sigma \in (0,\infty ),
\end{equation*}%
and by Theorem \ref{T1}, we obtain 
\begin{equation}
\hat{\lambda}\in (0,\infty )\text{ \ and \ }\kappa \in (0,\infty ).
\label{E2b}
\end{equation}%
We compute%
\begin{equation*}
g^{\prime }(u)=\frac{d}{du}\left( \frac{\sigma u^{p+1}-1}{u^{p+1}}\right) =%
\frac{p+1}{u^{p+2}}>0,
\end{equation*}%
which implies that (H$_{1}$) holds. So by Theorems \ref{T1}--\ref{T2}(i) and
(\ref{E2b}), the bifurcation curve $S$ is monotone decreasing, starts from $(%
\hat{\lambda},\eta )$ and goes to $\left( \kappa ,\infty \right) $. This
proof is complete.
\end{example}

\begin{example}
\label{E3}Consider (\ref{eq1}) with $f(u)=\sigma -1/\sqrt{u}$, where $\sigma
>0$. Clearly, $f(0^{+})=-\infty $ and $\beta _{2}=\infty $. Then the
bifurcation curve $S$ is monotone increasing, starts from $(2\pi ^{2}/\sigma
^{3},4/\sigma ^{2})$ and goes to $\left( \infty ,\infty \right) $.

Next, we prove the above statement. We compute%
\begin{equation*}
f^{\prime \prime }(u)=-\frac{3}{4u^{5/2}}<0\text{ \ for }u>0\text{ \ and \ }%
F(u)=\sigma u-2\sqrt{u}.
\end{equation*}%
Then (P$_{1}$)--(P$_{2}$) and (H$_{4}$) hold, and we obtain $\eta =4/\sigma
^{2}$. Since%
\begin{equation*}
\lim\limits_{u\rightarrow 0^{+}}\sqrt{u}g(u)=\lim\limits_{u\rightarrow 0^{+}}%
\frac{\sigma -1/\sqrt{u}}{\sqrt{u}}=-\infty \text{ \ and \ }%
\lim\limits_{u\rightarrow \infty }g(u)=0,
\end{equation*}%
and by Theorem \ref{T1}, we obtain 
\begin{equation}
\hat{\lambda}\in (0,\infty )\text{ \ and \ }\kappa =\infty .  \label{E3a}
\end{equation}%
In fact, we can apply Mathematica software to compute%
\begin{equation}
\int \frac{1}{\sqrt{-F(u)}}du=-\frac{2\sqrt{-F(u)}}{\sigma }-\frac{4}{\sigma
^{3/2}}\arctan \left( \frac{\sqrt{-\sigma F(u)}}{-2+\sqrt{u}\sigma }\right) .
\label{E3c}
\end{equation}%
By L'H\^{o}pital's rule, then%
\begin{equation*}
\lim_{u\rightarrow \eta ^{+}}\arctan \left( \frac{\sqrt{-\sigma F(u)}}{-2+%
\sqrt{u}\sigma }\right) =-\frac{\pi }{2}.
\end{equation*}%
So by (\ref{E3c}), we obtain%
\begin{equation}
\hat{\lambda}=\frac{1}{2}\left( \int_{0}^{\eta }\frac{1}{\sqrt{-F(u)}}%
du\right) ^{2}=\frac{1}{2}\left( \frac{2\pi }{\sigma ^{3/2}}\right) ^{2}=%
\frac{2\pi ^{2}}{\sigma ^{3}}.  \label{E3b}
\end{equation}%
Since 
\begin{equation*}
\lim\limits_{u\rightarrow \infty }\left[ f(u)-uf^{\prime }(u)\right]
=\lim\limits_{u\rightarrow \infty }\left( \sigma -\frac{3}{2\sqrt{u}}\right)
=\sigma >0,
\end{equation*}%
and by Remark \ref{R1}(ii), we see that (H$_{2}$) holds. We compute and find
that%
\begin{eqnarray*}
G &=&\int_{0}^{\eta }\frac{-\eta f(\eta )-2F(u)+uf(u)}{[-F(u)]^{3/2}}%
du=\int_{0}^{\eta }\frac{\left( -\sqrt{u}\sigma +2\right) \left( \sqrt{u}%
\sigma -1\right) }{\sigma \left( -\sigma u+2\sqrt{u}\right) ^{3/2}}du \\
&=&\int_{0}^{\eta }\frac{f(u)}{\sigma \sqrt{-F(u)}}du=\left. \frac{2\sqrt{%
-F(u)}}{-\sigma }\right\vert _{u=0}^{u=\eta }=0.
\end{eqnarray*}%
So by Theorems \ref{T1}--\ref{T2}(ii), (\ref{E3a}) and (\ref{E3b}), the
bifurcation curve $S$ is monotone increasing, starts from $(2\pi ^{2}/\sigma
^{3},4/\sigma ^{2})$ and goes to $\left( \infty ,\infty \right) $. This
proof is complete.
\end{example}

In Example \ref{E4}, the conditions (H$_{2}$) and (H$_{3}$) hold, and $\beta
_{2}<\infty $. In fact, the corresponding nonlinearity $f$ is concave-convex.

\begin{example}
\label{E4}Consider (\ref{eq1}) with 
\begin{equation*}
f(u)=4-\sqrt{u}-\frac{1}{\sqrt{u}}.
\end{equation*}%
Clearly, $f(0^{+})=-\infty $ and $\beta _{2}=7+4\sqrt{3}<\infty $. Then the
bifurcation curve $S$ is monotone increasing, starts from $(\hat{\lambda}%
,\eta )$ and goes to $\left( \infty ,\beta _{2}\right) $ where $\hat{\lambda}%
\approx 0.434$ and $\eta =(3-\sqrt{6})^{2}$.

Next, we prove above statement. It is easy to compute%
\begin{equation*}
F(u)=-\frac{2}{3}\sqrt{u}\left( u-6\sqrt{u}+3\right) ,
\end{equation*}%
\begin{equation*}
\beta _{1}=7-4\sqrt{3}\text{ }\left( \approx 0.071\right) \text{\ \ and \ }%
\beta _{2}=7+4\sqrt{3}\text{ }\left( \approx 13.928\right) .
\end{equation*}%
Then (P$_{1}$)--(P$_{2}$) hold, and we obtain $\eta =(3-\sqrt{6})^{2}$. By
L'H\"{o}pital's rule, we compute%
\begin{equation*}
\lim\limits_{u\rightarrow 0^{+}}\sqrt{u}g(u)=-\infty \text{ \ and \ }%
\lim\limits_{u\rightarrow \beta _{2}^{-}}\frac{f(u)}{\beta _{2}-u}%
=-f^{\prime }(\beta _{2})=\frac{\beta _{2}-1}{2\left( \sqrt{\beta _{2}}%
\right) ^{3}}>0.
\end{equation*}%
So by Theorem \ref{T1}, we obtain%
\begin{equation}
\hat{\lambda}\in (0,\infty )\text{ \ and \ }\kappa =\infty .  \label{E4a}
\end{equation}%
We further compute and find that 
\begin{equation}
g^{\prime }(u)=\frac{u-8\sqrt{u}+3}{2u^{5/2}}\text{ }\left\{ 
\begin{array}{ll}
>0 & \text{for }0<u<\sigma , \\ 
=0 & \text{for }u=\sigma =29-8\sqrt{13}, \\ 
<0 & \text{for }\sigma <u<\beta _{2},%
\end{array}%
\right.  \label{E4b}
\end{equation}%
and%
\begin{equation}
\left[ \frac{uf^{\prime }(u)}{f(u)}\right] ^{\prime }=\frac{\sqrt{u}}{%
u^{2}f^{2}(u)}p_{1}(u),  \label{E4c}
\end{equation}%
where $p_{1}(u)\equiv -u+\sqrt{u}-1$. Since $p_{1}^{\prime }(u)=\left( 1-2%
\sqrt{u}\right) /\left( 2\sqrt{u}\right) ,$ we observe that%
\begin{equation*}
p_{1}(u)<p_{1}(\frac{1}{4})=-\frac{3}{4}<0\text{ for }u>0\text{.}
\end{equation*}%
So (H$_{2}$) and (H$_{3}$) hold by (\ref{E4b}) and (\ref{E4c}). Then we
apply Maple software to compute 
\begin{equation*}
G\approx 0.1497>0\text{ \ and \ }\hat{\lambda}\approx 0.434.
\end{equation*}%
So by Theorems \ref{T1}--\ref{T2}(ii) and (\ref{E4a}), the bifurcation curve 
$S$ is monotone increasing, starts from $(\hat{\lambda},\eta )$ and goes to $%
\left( \infty ,\beta _{2}\right) $. This proof is complete.
\end{example}

In Example \ref{E5}, the corresponding nonlinearities $f$ are concave and $%
\beta _{2}<\infty $.

\begin{example}
\label{E5}Consider (\ref{eq1}) with $f(u)=-(u-a)(u-b)$ where $b>a>0$.
Clearly, $f(0^{+})>-\infty $, $\beta _{1}=a$ and $\beta _{2}=b$. Then the
following statements (i)--(ii) hold.

\begin{itemize}
\item[(i)] If $3a\geq b$, the bifurcation curve $S$ does not exist.

\item[(ii)] If $3a<b$, there the bifurcation curve $S$ is $\subset $-shaped,
starts from $(\hat{\lambda},\eta )$ and goes to $\left( \infty ,b\right) $
where%
\begin{equation}
\hat{\lambda}\in (0,\infty )\text{ \ and \ }\eta =\frac{3}{4}\left(
a+b\right) -\frac{1}{4}\sqrt{3\left( a-3b\right) \left( 3a-b\right) }
\label{E5a}
\end{equation}
\end{itemize}

Next, we prove above statements. We compute%
\begin{equation}
F(u)=u\left( -\frac{u^{2}}{3}+\frac{a+b}{2}u-ab\right) .  \label{E5b}
\end{equation}%
Then we consider two cases.

\textbf{Case 1}. Assume that $3a\geq b$. By the discriminant of quadratic
polynomial, we observe that $F(u)\leq 0$ on $\left( 0,\infty \right) $. It
implies that the statement (i) holds.

\textbf{Case 2.} Assume that $3a<b$. Since $f^{\prime \prime }(u)=-1<0$ for $%
u>0$, we see that (P$_{1}$)--(P$_{2}$) and (H$_{4}$) hold, and we obtain $%
\eta $ defined by (\ref{E5a}). Since 
\begin{equation*}
\lim\limits_{u\rightarrow 0^{+}}u^{1/2}g(u)=\lim\limits_{u\rightarrow 0^{+}}%
\frac{f(u)}{\sqrt{u}}=-\infty \text{ \ and \ }\lim\limits_{u\rightarrow
\beta _{2}^{-}}\frac{f(u)}{\beta _{2}-u}=b-a\in (0,\infty ),
\end{equation*}%
and by Theorem \ref{T1} and Corollary \ref{C1}, the bifurcation curve $S$ is 
$\subset $-shaped, starts from $(\hat{\lambda},\eta )$ and goes to $\left(
\infty ,b\right) $. where $\hat{\lambda}\in (0,\infty )$\ and\ $\kappa
=\infty $. The statement (ii) holds.

\noindent By Cases 1--2, the proof is complete.
\end{example}

In Example \ref{E6}, the corresponding nonlinearities $f$ is convex and $%
\beta _{2}=\infty $.

\begin{example}
\label{E6}Consider (\ref{eq1}) with $f(u)=\exp (u)-c$ where $c>1$. Clearly, $%
f(0^{+})>-\infty $ and $\beta _{2}=\infty $. Then the bifurcation curve $S$
is monotone decreasing, starts from $(\hat{\lambda},\eta )$ and goes to $%
\left( 0,\infty \right) $ where $\hat{\lambda}\in (0,\infty )$ and $\eta $
satisfies $\exp (\eta )-c\eta -1=c$.

Next, we prove above statement. It is easy to see that (P$_{1}$)--(P$_{2}$)
hold. We compute and find that%
\begin{equation*}
f^{\prime \prime }(u)>0\text{ for }u>0\text{, \ }f(0^{+})=1-c<0\text{ \ and
\ }\lim\limits_{u\rightarrow \infty }f^{\prime }(u)=\infty .
\end{equation*}%
So by Theorem \ref{T1} and Corollary \ref{C2}, then the bifurcation curve $S$
is monotone decreasing, starts from $(\hat{\lambda},\eta )$ and goes to $%
\left( 0,\infty \right) $. This proof is complete.
\end{example}

In Examples \ref{E7} and \ref{E8}, the conditions (H$_{2}$) and (H$_{3}$)
hold, and $\beta _{2}<\infty $. In fact, the corresponding nonlinearities $f$
are convex-concave and convave-convex, respectively.

\begin{example}
\label{E7}Consider (\ref{eq1}) with $f(u)=\left( 1-u^{2}\right) \left(
u-3\right) $. Clearly, $f(0^{+})>-\infty $ and $\beta _{2}=3$. Then the
bifurcation curve $S$ is $\subset $-shaped, starts from $(\hat{\lambda},2)$
and goes to $\left( \infty ,3\right) $ where $\hat{\lambda}\approx 3.043$.

Next, we prove above statement. We compute%
\begin{equation*}
F(u)=\frac{1}{4}u\left( u-2\right) \left( -u^{2}+2u+6\right) .
\end{equation*}%
Then (P$_{1}$)--(P$_{2}$) hold, and we obtain $\eta =2$. Since%
\begin{equation*}
\lim\limits_{u\rightarrow 0^{+}}\sqrt{u}g(u)=\lim\limits_{u\rightarrow 0^{+}}%
\frac{f(u)}{\sqrt{u}}=-\infty \text{ \ and \ }\lim\limits_{u\rightarrow
\beta _{2}^{-}}\frac{f(u)}{\beta _{2}-u}=8>0,
\end{equation*}%
and by Theorem \ref{T1}, we obtain 
\begin{equation}
\hat{\lambda}\in (0,\infty )\text{ \ and \ }\kappa =\infty .  \label{E7c}
\end{equation}%
By elementary analysis, there exists $\sigma $ $\left( \approx 1.910\right) $
such that 
\begin{equation*}
g^{\prime }(u)=\frac{-2u^{3}+3u^{2}+3}{u^{2}}\left\{ 
\begin{array}{ll}
>0 & \text{for }0<u<\sigma , \\ 
=0 & \text{for }u=\sigma , \\ 
<0 & \text{for }\sigma <u<\beta _{2}=3,%
\end{array}%
\right.
\end{equation*}%
see Figure \ref{fig2}(i). It implies that (H$_{2}$) holds. We compute and
find that%
\begin{equation*}
\left[ \frac{uf^{\prime }(u)}{f(u)}\right] ^{\prime }=\frac{%
-3u^{4}-4u^{3}+30u^{2}-36u-3}{\left( u-3\right) ^{2}\left( u^{2}-1\right)
^{2}}<0\text{ \ for }\sigma <u<\beta _{2}=3,
\end{equation*}%
see Figure \ref{fig2}(ii). It implies that (H$_{3}$) holds. Since $%
f(0^{+})=-3>-\infty $, and by Theorem \ref{T1}, Corollary \ref{C1} and (\ref%
{E7c}), the bifurcation curve $S$ is $\subset $-shaped, starts from $(\hat{%
\lambda},2)$ and goes to $\left( \infty ,3\right) $. Finally, we apply Maple
software to obtain $\hat{\lambda}\approx 3.043$. This proof is complete. 

\begin{figure}[tbp] % float placement: (h)ere, page (t)op, page (b)ottom, other (p)age
  \centering
  % file name: C:/Users/M/Dropbox/工作-Huang/研究資料/黃少遠/投稿中/(2026投到AMEN)-semipostone with concave and geometric concave (R)/投稿中檔案/6 arXIV/fig2.bmp
  \includegraphics[width=6.12in,height=2.87in,keepaspectratio]{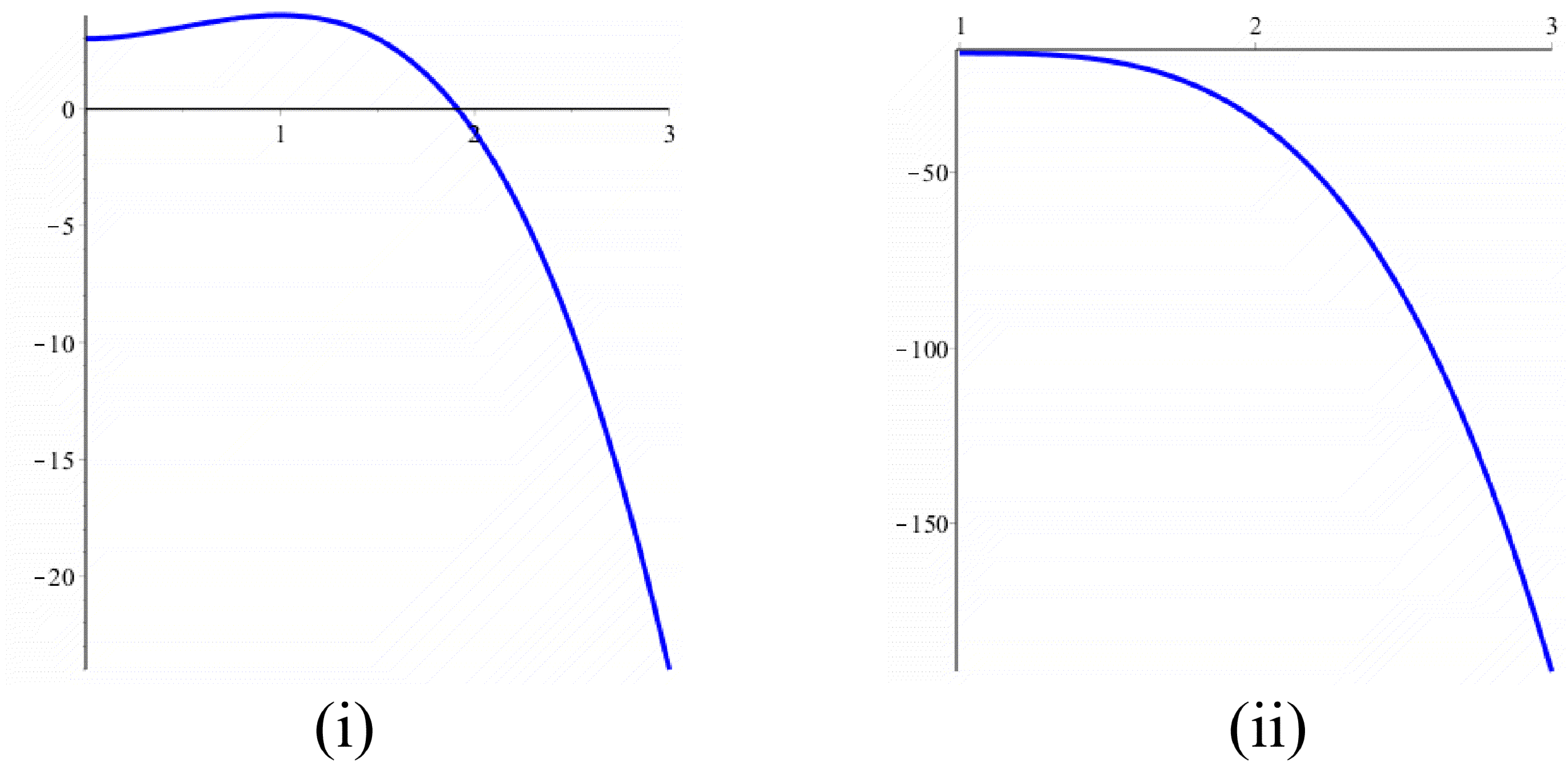}
  \caption{(i) The graph of $%
-2u^{3}+3u^{2}+3 $ on $\left( 0,3\right) $. (ii) The graph of $%
-3u^{4}-4u^{3}+30u^{2}-36u-3$ on $\left( 1,3\right) .$}
  \label{fig2}
\end{figure}

\end{example}

\begin{example}
\label{E8}Consider (\ref{eq1}) with $%
f(u)=-15u^{4}+140u^{3}-450u^{2}+540u-138 $. Clearly, $f(0^{+})>-\infty $ and 
$\beta _{2}<\infty $. Then the bifurcation curve $S$ is $\subset $-shaped,
starts from $(\hat{\lambda},\eta )$ and goes to $\left( \infty ,\beta
_{2}\right) $ where $\hat{\lambda}\approx 0.038$, $\eta \approx 0.814$ and $%
\beta _{2}\approx 2.551$.

Next, we prove above statement. By elementary analysis, we find that (P$_{1}$%
)--(P$_{2}$) hold. Furthermore, we have $\beta _{1}$ $\left( \approx
0.344\right) $ and $\beta _{2}$ $\left( \approx 2.551\right) $. We compute%
\begin{equation*}
F(u)=-u(3u^{4}-35u^{3}+150u^{2}-270u+138).
\end{equation*}%
Then there exists $\eta $ $\left( \approx 0.814\right) >0$ such that $F(\eta
)=0.$ Since%
\begin{equation*}
\lim\limits_{u\rightarrow 0^{+}}\sqrt{u}g(u)=\lim\limits_{u\rightarrow 0^{+}}%
\frac{f(u)}{\sqrt{u}}=-\infty \text{ \ and \ }\lim\limits_{u\rightarrow
\beta _{2}^{-}}\frac{f(u)}{\beta _{2}-u}=K(\beta _{2}-\beta _{1})>0
\end{equation*}%
\ for some $K\geq 0$, and by Theorem \ref{T1}, we obtain 
\begin{equation}
\hat{\lambda}\in (0,\infty )\text{ \ and \ }\kappa =\infty .  \label{E8c}
\end{equation}%
By elementary analysis, there exists $\sigma $ $\left( \approx 0.709\right) $
such that%
\begin{equation*}
g^{\prime }(u)=\frac{-45u^{4}+280u^{3}-450u^{2}+138}{u^{2}}\left\{ 
\begin{array}{ll}
>0 & \text{for }0<u<\sigma , \\ 
=0 & \text{for }u=\sigma , \\ 
<0 & \text{for }\sigma <u<\beta _{2}=3,%
\end{array}%
\right.
\end{equation*}%
see Figure \ref{fig3}(i). It implies that (H$_{2}$) holds. We compute and
find that%
\begin{equation*}
\left[ \frac{uf^{\prime }(u)}{f(u)}\right] ^{\prime }=\frac{%
-60(u-3)(35u^{5}-345u^{4}+1230u^{3}-1902u^{2}+1242u-414)}{f^{2}(u)}<0
\end{equation*}%
for $\sigma <u<\beta _{2}$, see Figure \ref{fig3}(ii). Since $%
f(0^{+})=-138>-\infty $, and by Theorem \ref{T1}, Corollary \ref{C1} and (%
\ref{E8c}), the bifurcation curve $S$ is $\subset $-shaped, starts from $(%
\hat{\lambda},\eta )$ and goes to $\left( \infty ,\beta _{2}\right) $. The
proof is complete. 
\begin{figure}[h] % float placement: (h)ere, page (t)op, page (b)ottom, other (p)age
  \centering
  % file name: C:/Users/M/Dropbox/工作-Huang/研究資料/黃少遠/投稿中/(2026投到AMEN)-semipostone with concave and geometric concave (R)/投稿中檔案/6 arXIV/fig3.bmp
  \includegraphics[width=6.12in,height=2.87in,keepaspectratio]{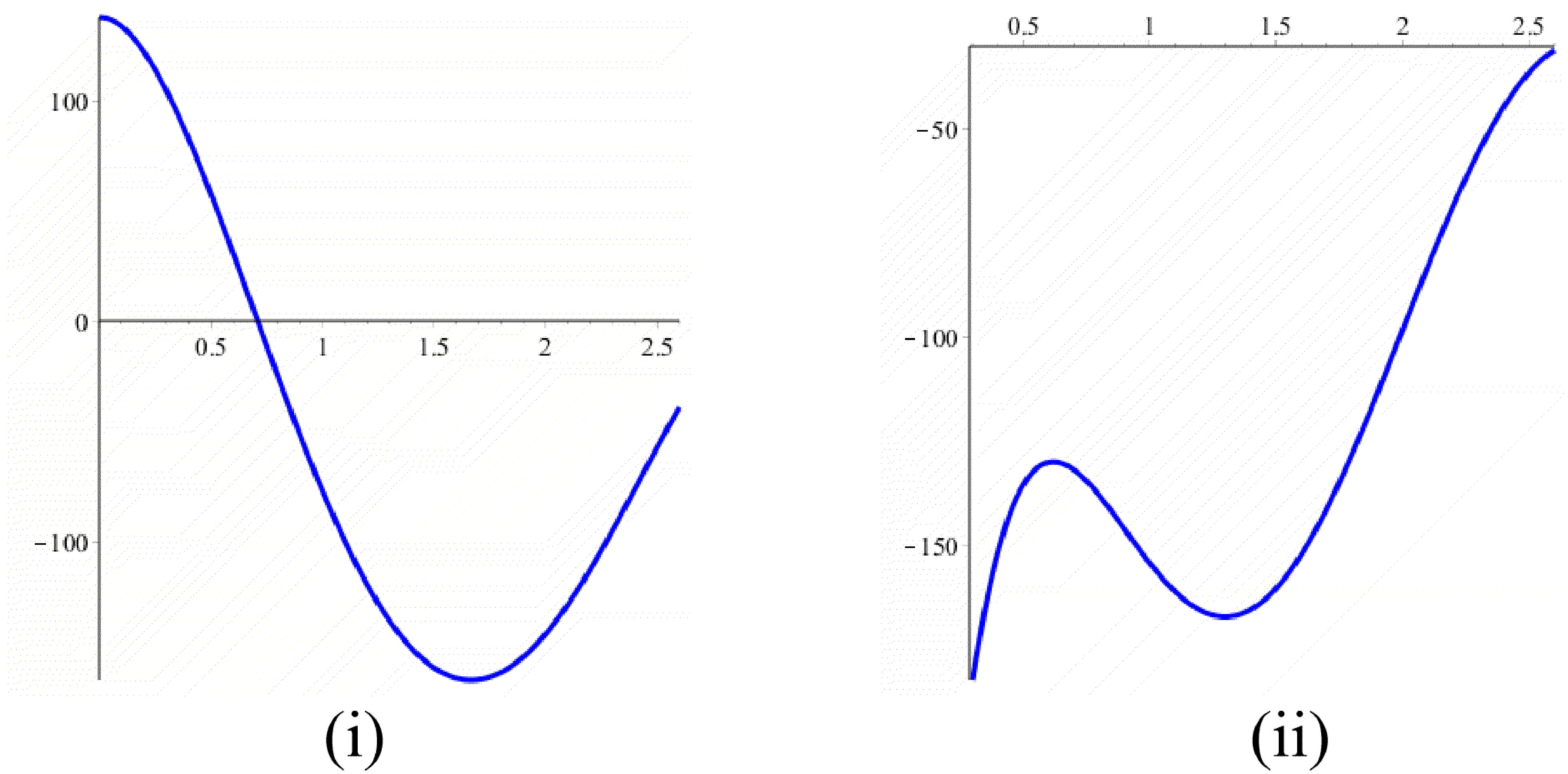}
  \caption{(i) The graph
of $-45u^{4}+280u^{3}-450u^{2}+138$ on $\left( 0,2.6\right) $. (ii) The
graph of $35u^{5}-345u^{4}+1230u^{3}-1902u^{2}+1242u-414$ on $\left(
0.3,2.6\right) .$}
  \label{fig3}
\end{figure}
\end{example}

In Example \ref{E9}, the problem was originally presented in \cite{ref8}.
Castro et al. \cite{ref8} proved that the corresponding bifurcation curve $S$
is $\subset $-shaped if $c>a>0$ and $b>0$. Obviously, This result is
evidently a specific case satisfying condition (\ref{4.1}) stated below.

\smallskip

\begin{example}
\label{E9}Consider (\ref{eq1}) with $f(u)=a+bu-ce^{-u}$ where 
\begin{equation}
\text{either (}b>0,\text{ }c>0\text{ and }a<c\text{), \ or \ (}b=0\text{ and 
}c>a>0\text{).}  \label{4.1}
\end{equation}%
Then the following statements (i)--(iii) hold:

\begin{itemize}
\item[(i)] If $a\leq 0$ and $b>0$, then the bifurcation curve $S$ is
monotone decreasing, starts from $(\hat{\lambda},\eta )$ and goes to $\left(
\kappa ,\infty \right) $ where $\hat{\lambda},\kappa \in (0,\infty )$.

\item[(ii)] If $a>0$ and $b>0$, then the bifurcation curve $S$ is $\subset $%
-shaped, starts from $(\hat{\lambda},\eta )$ and goes to $\left( \kappa
,\infty \right) $ where $\hat{\lambda},\kappa \in (0,\infty )$.

\item[(iii)] If $a>0$ and $b=0$, then the bifurcation curve $S$ is $\subset $%
-shaped, starts from $(\hat{\lambda},\eta )$ and goes to $\left( \infty
,\infty \right) $ where $\hat{\lambda}\in (0,\infty )$.
\end{itemize}

Next, we prove above statements. Clearly, $\beta _{2}=\infty $, $f^{\prime
}(u)=b+ce^{-u}>0$ and $f^{\prime \prime }(u)=-ce^{-u}<0$ for $u>0$. Then we
observe that%
\begin{equation*}
\text{(\ref{4.1}) holds \ if and only if \ (P}_{1}\text{)--(P}_{2}\text{)
and (H}_{4}\text{) hold.}
\end{equation*}%
We compute%
\begin{equation}
\lim\limits_{u\rightarrow \infty }\left[ f(u)-uf^{\prime }(u)\right]
=\lim\limits_{u\rightarrow \infty }\left( a-ce^{-u}-cue^{-u}\right) =a
\label{E9a}
\end{equation}%
and%
\begin{equation}
\lim\limits_{u\rightarrow \infty }g(u)=\lim\limits_{u\rightarrow \infty }%
\frac{a+bu-ce^{-u}}{u}=b.  \label{E9b}
\end{equation}%
By (\ref{4.1}), we see that $f(0^{+})=a-c\in (-\infty ,0)$, which implies
that%
\begin{equation*}
\lim\limits_{u\rightarrow 0^{+}}\sqrt{u}g(u)=\lim\limits_{u\rightarrow 0^{+}}%
\frac{f(u)}{\sqrt{u}}=-\infty .
\end{equation*}%
So by (\ref{E9b}) and Theorem \ref{T1}, then 
\begin{equation}
\hat{\lambda}\in (0,\infty )\text{ \ and \ }\left\{ 
\begin{array}{ll}
\kappa =\infty & \text{if }b=0\text{,} \\ 
\kappa \in (0,\infty ) & \text{if }b>0.%
\end{array}%
\right.  \label{E9c}
\end{equation}%
Since%
\begin{equation*}
\lim\limits_{u\rightarrow 0^{+}}u^{\frac{1}{3}}f(u)=0>-\infty ,
\end{equation*}%
and by Theorem \ref{T0}, we obtain that $G<0$. Next, we consider three cases.

\smallskip

Case 1. Assume that $a\leq 0$ and $b>0$. By (\ref{E9a}) and Remark \ref{R1}%
(ii), then (H$_{1}$) holds. By Theorems \ref{T1}--\ref{T2}(i) and (\ref{E9c}%
), the bifurcation curve $S$ is monotone decreasing, starts from $(\hat{%
\lambda},\eta )$ and goes to $\left( \kappa ,\infty \right) $. Thus the
statement (i) holds.

\smallskip

Case 2. Assume that $a>0$ and $b>0$. By (\ref{E9a}) and Remark \ref{R1}(ii),
then (H$_{2}$) holds. Since $G<0$, and by Theorems \ref{T1}--\ref{T2}(ii)
and (\ref{E9c}), the bifurcation curve $S$ is $\subset $-shaped, starts from 
$(\hat{\lambda},\eta )$ and goes to $\left( \kappa ,\infty \right) $. Thus
the statement (ii) holds.

\smallskip

Case 3. Assume that $a>0$ and $b=0$. By (\ref{E9a}) and Remark \ref{R1}(ii),
then (H$_{2}$) holds. Since $G<0$, and by Theorems \ref{T1}--\ref{T2}(ii)
and (\ref{E9c}), the bifurcation curve $S$ is $\subset $-shaped, starts from 
$(\hat{\lambda},\eta )$ and goes to $\left( \infty ,\infty \right) $. Thus
the statement (iii) holds.

\noindent By Cases 1--3, the proof is complete.
\end{example}

\section{Lemmas}

Since $F^{\prime }(u)=f(u)$, and by (P$_{1}$), we see that 
\begin{equation}
F^{\prime }(u)<0\text{ on }\left( 0,\beta _{1}\right) \cup \left( \beta
_{2},\infty \right) \text{, \ }F^{\prime }(\beta _{1})=0\text{ \ and \ }%
F^{\prime }(u)>0\text{ on }\left( \beta _{1},\beta _{2}\right) .  \label{dF}
\end{equation}%
Since $F(0^{+})=0$, and by (P$_{2}$), we observe that%
\begin{equation}
\left\{ u:u>0\text{, }f(u)>0\text{, }F(u)>F(s)\text{ \ for }0<s<u\right\}
=\left( \eta ,\beta _{2}\right)  \label{D}
\end{equation}%
To prove our main results, we use time-map techniques. The time-map formula
which we apply to study (\ref{eq1}) takes the form as follows:%
\begin{equation}
T(\alpha )\equiv \frac{1}{\sqrt{2}}\int_{0}^{\alpha }\frac{1}{\sqrt{F(\alpha
)-F(u)}}du=\sqrt{\lambda }\text{ \ for }\alpha \in \left( \eta ,\beta
_{2}\right) ,  \label{defT}
\end{equation}%
see Laetsch \cite{Laetsch}{\small . (}Note that it can be proved that $%
T(\alpha )$ is a twice differentiable function. The proof is easy but
tedious and hence we omit it.) So the positive solution $u$ of (\ref{eq1})
corresponds to%
\begin{equation*}
\left\Vert u\right\Vert _{\infty }=\alpha \text{ \ and \ }T(\alpha )=\sqrt{%
\lambda }.
\end{equation*}%
Then by definition of $S$ in (\ref{S}), we have that%
\begin{equation}
S=\left\{ \left( \lambda ,\alpha \right) :T(\alpha )=\sqrt{\lambda }\text{
for some }\alpha \in \left( \eta ,\beta _{2}\right) \text{ and }\lambda
>0\right\} .  \label{SS}
\end{equation}%
Thus, studying the shape of bifurcation curve $S$\ on the $(\lambda
,\left\Vert u\right\Vert _{\infty })$-plane is equivalent to studying the
shape of the time-map $T(\alpha )$ on $\left( \eta ,\beta _{2}\right) $.
Next, we let 
\begin{equation*}
\theta (u)\equiv 2F(u)-uf(u)\text{ \ for }u>0.
\end{equation*}%
Clearly, $\theta ^{\prime }(u)=f(u)-uf^{\prime }(u)$. Then we have the
following Lemmas \ref{LL1} and \ref{L1}.

\begin{lemma}
\label{LL1}Consider (\ref{eq1}). Then $\theta (0^{+})=0.$
\end{lemma}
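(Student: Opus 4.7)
Since $\theta(u) = 2F(u) - uf(u)$, it suffices to establish the two limits $\lim_{u \to 0^+} F(u) = 0$ and $\lim_{u \to 0^+} uf(u) = 0$ separately; combining gives $\theta(0^+) = 2\cdot 0 - 0 = 0$.

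The first limit is immediate from condition (P$_2$): by hypothesis $F(u) = \int_{0}^{u} f(t)\,dt$ is well-defined (as an improper integral in case $f(0^+) = -\infty$) for every $u > 0$, and convergence of the improper integral at the lower endpoint forces $F(u) \to 0$ as $u \to 0^+$.

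For the second limit, I would split according to whether $f(0^+)$ is finite. If $f(0^+) > -\infty$, then $f$ is bounded on some right-neighborhood of $0$, say $|f| \le M$, so that $|uf(u)| \le Mu \to 0$ trivially. If $f(0^+) = -\infty$, the plan is to couple the convergence of $F$ with an averaging bound on the short interval $[u/2,\,u]$: since $-f > 0$ there and (for $u$ small) $-f$ approaches $+\infty$ monotonically as one moves toward $0$, one has
\begin{equation*}
\tfrac{u}{2}\bigl(-f(u)\bigr) \;\leq\; \int_{u/2}^{u} \bigl(-f(t)\bigr)\,dt \;=\; F(u/2) - F(u),
\end{equation*}
and the right-hand side tends to $0$ since both $F(u)$ and $F(u/2)$ do. Hence $u|f(u)| \to 0$, which is the desired conclusion.

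The subtle point is the singular case $f(0^+) = -\infty$: there one must extract the rate of blow-up of $|f|$ near $0$ from the mere integrability of $f$ near $0$. The averaging argument above accomplishes this provided $-f$ is eventually monotone near $0$, which is the situation in all the singular examples of Section 3 (and in particular follows from the concavity/geometric-concavity structure imposed later in the paper). Combining the two limits yields $\theta(0^+) = 0$.
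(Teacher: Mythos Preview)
Your decomposition $\theta = 2F - uf$ and the treatment of $F(0^+)=0$ match the paper. The gap is in your argument for $uf(u)\to 0$ in the singular case. The averaging bound
\[
\tfrac{u}{2}\bigl(-f(u)\bigr) \le \int_{u/2}^{u}\bigl(-f(t)\bigr)\,dt
\]
requires $-f(t)\ge -f(u)$ for all $t\in[u/2,u]$, i.e., that $f$ be monotone near $0$. You flag this yourself and defer to the concavity hypotheses imposed later, but Lemma~\ref{LL1} is stated (and must hold) under only (P$_1$)--(P$_2$), which place no monotonicity restriction on $f$ near the origin. In particular, under (H$_2$) alone---where the lemma is also invoked via Lemma~\ref{L1}(ii)---only $g(u)=f(u)/u$ is known to be increasing near $0$; since $f'=g+ug'$ with $g<0$ and $g'>0$ there, the sign of $f'$ is undetermined, so $f$ itself need not be monotone. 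Thus the averaging step does not go through in the generality the lemma claims.

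The paper's route avoids any shape hypothesis on $f$ by arguing directly from integrability: since $f<0$ on $(0,\beta_1)$ one has $uf(u)\le 0$; if $\lim_{u\to 0^+}uf(u)<0$, then $uf(u)<-c$ on some $(0,\delta)$ for a constant $c>0$, hence $f(u)<-c/u$ and
\[
F(u)=\int_0^u f(t)\,dt < -c\int_0^u \frac{dt}{t}=-\infty,
\]
contradicting the existence of $F$ asserted in (P$_2$). This extracts $uf(u)\to 0$ straight from the convergence of the defining integral, with no appeal to monotonicity or concavity of $f$.
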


\begin{proof}
By (P$_{1}$), we see that $\lim\limits_{u\rightarrow 0^{+}}uf(u)\leq 0$.
Assume that $\lim\limits_{u\rightarrow 0^{+}}uf(u)<0$. Then there exists $%
c>0 $ and $\delta >0$ such that $uf(u)<-c$ for $0<u<\delta $. It follows that%
\begin{equation*}
F(u)=\int_{0}^{u}f(t)dt<\int_{0}^{u}\frac{-c}{t}dt=-\infty \text{ \ for }%
0<u<\delta ,
\end{equation*}%
which is a contradiction by (P$_{2}$). Thus $\lim\limits_{u\rightarrow
0^{+}}uf(u)=0$. It follows that%
\begin{equation*}
\lim\limits_{u\rightarrow 0^{+}}\theta (u)=\lim\limits_{u\rightarrow 0^{+}} 
\left[ 2F(u)-uf(u)\right] =2F(0)=0
\end{equation*}%
The proof is complete.
\end{proof}

\begin{lemma}
\label{L1}Consider (\ref{eq1}). Then the statement (i)--(iii) hold:

\begin{itemize}
\item[(i)] Assume that (H$_{1}$) holds. Then $\theta ^{\prime }\left(
u\right) <0$ on $\left( 0,\beta _{2}\right) .$

\item[(ii)] Assume that (H$_{2}$) holds. Then%
\begin{equation}
\theta ^{\prime }\left( u\right) \left\{ 
\begin{array}{ll}
<0 & \text{for }0<u<\sigma , \\ 
=0 & \text{for }u=\sigma , \\ 
>0 & \text{for }\sigma <u<\beta _{2}.%
\end{array}%
\right.  \label{2.1}
\end{equation}%
Furthermore,

\begin{itemize}
\item[(a)] if $\theta (\beta _{2}^{-})\leq 0$, then $\theta \left( u\right)
<0$ on $(0,\beta _{2})$, see Figure \ref{fig5}(i); and

\item[(b)] if $\theta (\beta _{2}^{-})>0$, then there exists $\rho \in
(\sigma ,\beta _{2})$ such that $\theta (u)<0$ on $\left( 0,\rho \right) $, $%
\theta (\rho )=0$ and $\theta (u)>0$ on $\left( \rho ,\beta _{2}\right) $,
see Figure \ref{fig5}(ii).
\end{itemize}

\item[(iii)] Assume that (H$_{4}$) holds. Then

\begin{itemize}
\item[(a)] if $\theta ^{\prime }(\beta _{2}^{-})\leq 0$, then $\theta
^{\prime }\left( u\right) <0$ and $\theta \left( u\right) <0$ on $(0,\beta
_{2})$, see Figure \ref{fig5}(iii); and

\item[(b)] if $\theta ^{\prime }(\beta _{2}^{-})>0$, then there exist $%
0<\gamma <\rho <\beta _{2}$ such that%
\begin{equation}
\theta ^{\prime }\left( u\right) \left\{ 
\begin{array}{ll}
<0 & \text{for }0<u<\gamma , \\ 
=0 & \text{for }u=\gamma , \\ 
>0 & \text{for }\gamma <u<\beta _{2},%
\end{array}%
\right. \text{ \ and \ }\theta \left( u\right) \left\{ 
\begin{array}{ll}
<0 & \text{for }0<u<\rho , \\ 
=0 & \text{for }u=\rho , \\ 
>0 & \text{for }\rho <u<\beta _{2},%
\end{array}%
\right.  \label{3.6}
\end{equation}%
see Figure \ref{fig5}(iv).

\begin{figure}[h] % float placement: (h)ere, page (t)op, page (b)ottom, other (p)age
  \centering
  % file name: C:/Users/M/Dropbox/工作-Huang/研究資料/黃少遠/投稿中/(2026投到AMEN)-semipostone with concave and geometric concave (R)/投稿中檔案/6 arXIV/fig5.bmp
  \includegraphics[width=4.58in,height=4.69in,keepaspectratio]{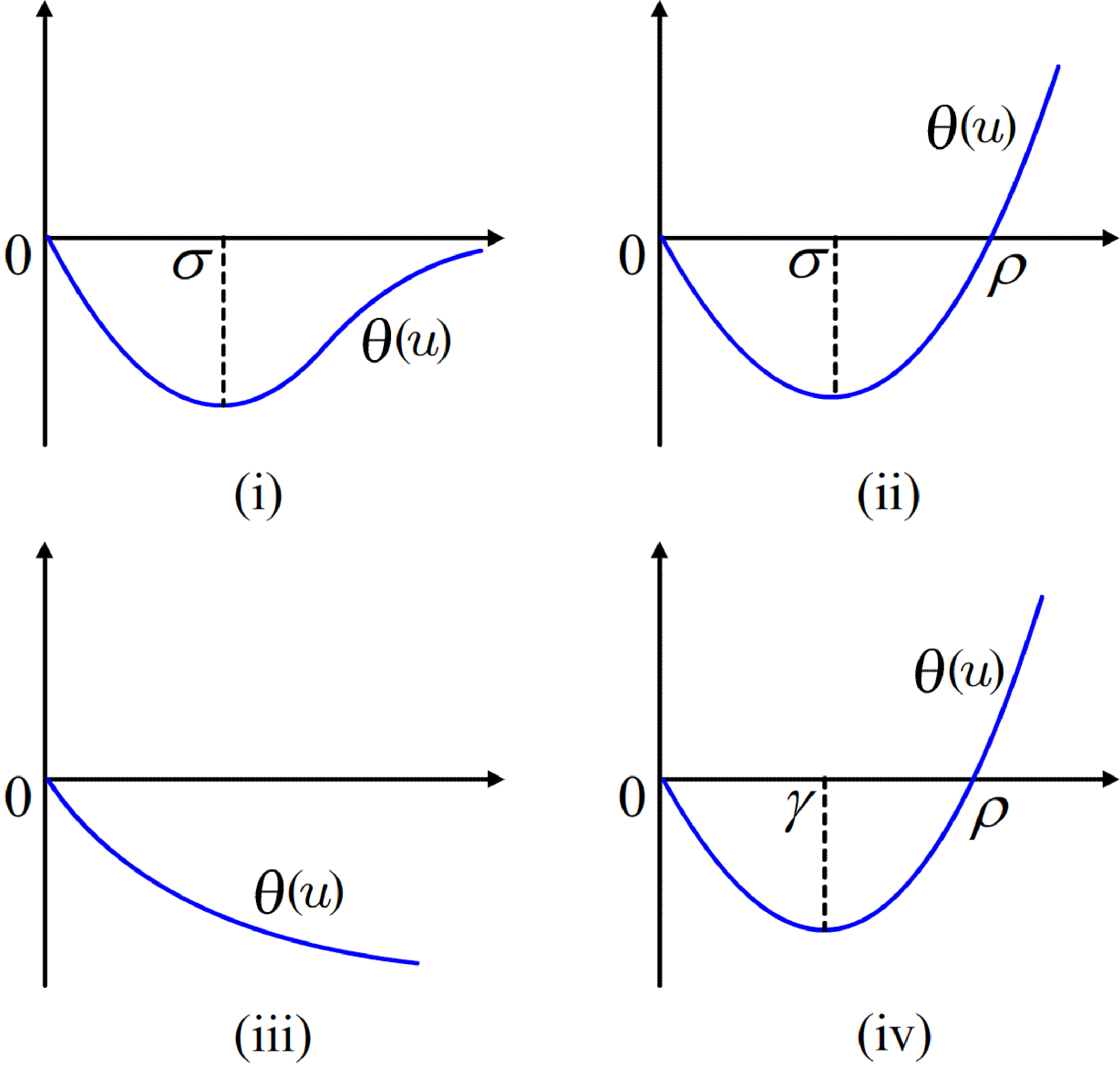}
  \caption{Graphs
of $\protect\theta \left( u\right) $. (i) (H$_{2}$) holds and $\protect%
\theta (\protect\beta _{2}^{-})\leq 0$. (ii) (H$_{2}$) holds and $\protect%
\theta (\protect\beta _{2}^{-})>0$. (ii) (H$_{4}$) holds and $\protect\theta %
^{\prime }(\protect\beta _{2}^{-})\leq 0$. (iii) (H$_{4}$) holds and $%
\protect\theta ^{\prime }(\protect\beta _{2}^{-})>0.$}
  \label{fig5}
\end{figure}
\end{itemize}
\end{itemize}
\end{lemma}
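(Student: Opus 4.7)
The plan rests on two identities that link $\theta$ directly to the hypotheses: $\theta'(u) = f(u) - uf'(u) = -u^{2} g'(u)$ and $\theta''(u) = -u f''(u)$. These convert (H$_1$), (H$_2$), and (H$_4$) into sign information for $\theta'$ and $\theta''$. Combined with the baseline $\theta(0^+) = 0$ from Lemma~\ref{LL1} and the observation that $\theta(\eta) = 2F(\eta) - \eta f(\eta) = -\eta f(\eta) < 0$ (using (P$_2$) and $f(\eta) > 0$ for $\eta \in (\beta_1,\beta_2)$), the three parts will follow from elementary monotonicity plus the intermediate value theorem.

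For part (i), (H$_1$) gives $g' > 0$, hence $\theta' < 0$ on $(0,\beta_2)$ and we are done. For part (ii), the sign pattern of $g'$ in (H$_2$) directly translates into (\ref{2.1}), so $\theta$ strictly decreases on $(0,\sigma)$ and strictly increases on $(\sigma,\beta_2)$; together with $\theta(0^+) = 0$ this forces $\theta(\sigma) < 0$. The dichotomy on $\theta(\beta_2^-)$ then splits into case (a), where $\theta$ stays negative because its supremum on the increasing branch is $\leq 0$, and case (b), where the intermediate value theorem delivers a unique $\rho \in (\sigma,\beta_2)$ with $\theta(\rho) = 0$.

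For part (iii), (H$_4$) yields $\theta'' > 0$, so $\theta'$ is strictly increasing on $(0,\beta_2)$. In case (a), $\theta'(\beta_2^-) \leq 0$ forces $\theta' < 0$ throughout, so $\theta$ strictly decreases from $0$ and must be negative. In case (b), $\theta'(\beta_2^-) > 0$: if $\theta'$ were nonnegative on all of $(0,\beta_2)$ then $\theta$ would be increasing from $0$ and hence positive everywhere, contradicting $\theta(\eta) < 0$. Therefore $\theta'(0^+) < 0 < \theta'(\beta_2^-)$ and strict monotonicity of $\theta'$ yields a unique $\gamma$ with $\theta'(\gamma) = 0$, giving the sign pattern of $\theta'$ in (\ref{3.6}).

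The main obstacle will be producing $\rho$ in case (iii)(b), since for this I need $\theta(\beta_2^-) > 0$, which is not immediately visible from (H$_4$). I would handle it by splitting on the size of $\beta_2$. If $\beta_2 < \infty$, continuity of $f$ gives $f(\beta_2) = 0$, so $\theta(\beta_2^-) = 2F(\beta_2)$; by (\ref{dF}), $F$ is strictly increasing on $(\beta_1,\beta_2)$, and since $F(\eta) = 0$ with $\eta < \beta_2$, we conclude $F(\beta_2) > 0$. If $\beta_2 = \infty$, then $\lim_{u\to\infty}\theta'(u) > 0$ (possibly $+\infty$) forces $\theta(u) \to \infty$. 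Either way, $\theta$ strictly increases from $\theta(\gamma) < 0$ to a positive limit on $(\gamma,\beta_2)$, so the intermediate value theorem produces a unique $\rho \in (\gamma,\beta_2)$ with $\theta(\rho) = 0$ and the stated sign pattern, completing the proof.
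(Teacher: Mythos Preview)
Your proof is correct and follows essentially the same approach as the paper's: both use the identities $\theta'(u) = -u^{2} g'(u)$ and $\theta''(u) = -u f''(u)$ together with $\theta(0^{+}) = 0$ from Lemma~\ref{LL1}, then read off the sign of $\theta'$ and apply the intermediate value theorem, handling the $\beta_{2} < \infty$ and $\beta_{2} = \infty$ cases separately in (iii)(b) exactly as you do. The only notable difference is in how you show $\theta'$ is negative near $0$ in (iii)(b): the paper argues directly that $\theta'(0^{+}) \leq f(0^{+}) < 0$ using $f'(0^{+}) > 0$, whereas you argue by contradiction via $\theta(\eta) = -\eta f(\eta) < 0$, which is a slightly cleaner route since it avoids examining the limit of $u f'(u)$ at the origin.
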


\begin{proof}
We compute%
\begin{equation}
g^{\prime }(u)=\frac{uf^{\prime }(u)-f(u)}{u^{2}}=-\frac{\theta ^{\prime
}\left( u\right) }{u^{2}}.  \label{L1a}
\end{equation}

(I) Assume that (H$_{1}$) holds. By (\ref{L1a}), then $\theta ^{\prime
}\left( u\right) <0$ on $\left( 0,\beta _{2}\right) $. So the statement (i)
holds.

(II) Assume that (H$_{2}$) holds. By (\ref{L1a}), then (\ref{2.1}) holds. If 
$\theta (\beta _{2}^{-})\leq 0$, by Lemma \ref{LL1} and (\ref{2.1}), we see
that $\theta \left( u\right) <0$ on $(0,\beta _{2})$. If $\theta (\beta
_{2}^{-})>0$, by Lemma \ref{LL1} and (\ref{2.1}), then there exists $\rho
\in (\sigma ,\beta _{2})$ such that $\theta (u)<0$ on $\left( 0,\rho \right) 
$, $\theta (\rho )=0$ and $\theta (u)>0$ on $\left( \rho ,\beta _{2}\right) $%
. So the statement (ii) holds.

(III) Assume that (H$_{4}$) holds. It follows that%
\begin{equation}
\theta ^{\prime \prime }\left( u\right) =-uf^{\prime \prime }(u)>0\text{ \
for }0<u<\beta _{2}.  \label{3.2}
\end{equation}%
Then we consider two cases.

\quad Case 1. Assume that $\theta ^{\prime }(\beta _{2}^{-})\leq 0$. By (\ref%
{3.2}), then $\theta ^{\prime }\left( u\right) <\theta ^{\prime }(\beta
_{2}^{-})\leq 0$ for $0<u<\beta _{2}$. It follows that $\theta \left(
u\right) <\theta (0^{+})=0$ for $0<u<\beta _{2}$ by Lemma \ref{LL1}. So the
statement (a) holds.

\quad Case 2. Assume that $\theta ^{\prime }(\beta _{2}^{-})>0$. By (P$_{1}$%
) and (H$_{4}$), then $f(0^{+})<0$ and $f^{\prime }(0^{+})>0$. It follows
that $\theta ^{\prime }\left( 0^{+}\right) \leq f(0^{+})<0$.\ Since $\theta
^{\prime }(\beta _{2}^{-})>0$, and by (\ref{3.2}), there exists $\gamma \in
(0,\beta _{2})$ such that 
\begin{equation}
\theta ^{\prime }\left( u\right) <0\text{ on }(0,\gamma )\text{, \ }\theta
^{\prime }\left( \gamma \right) =0\text{ \ and \ }\theta ^{\prime }\left(
u\right) >0\text{ on }(\gamma ,\beta _{2}).  \label{L1b}
\end{equation}%
If $\beta _{2}<\infty $, by (P$_{1}$), then $\theta (\beta _{2})=2F(\beta
_{2})>0$. So by Lemma \ref{LL1} and (\ref{L1b}), there exists $\rho \in
(\gamma ,\beta _{2})$ such that holds. If $\beta _{2}=\infty $, by Lemma \ref%
{LL1} and (\ref{3.2}), then $\lim\limits_{u\rightarrow \infty }\theta (u)>0$%
. So (\ref{3.6}) holds.

The proof is complete.
\end{proof}

\begin{lemma}
\label{L3}Consider (\ref{eq1}). Then the following statements (i)--(ii) hold:

\begin{itemize}
\item[(i)] If $\lim\limits_{u\rightarrow 0^{+}}g(u)\in (-\infty ,0]$, then $%
T(\eta ^{+})=\infty .$

\item[(ii)] If $\lim\limits_{u\rightarrow 0^{+}}u^{p}g(u)\in \lbrack -\infty
,0)$ for some $p\in (0,1)$, then $T(\eta ^{+})\in (0,\infty )$.
\end{itemize}
\end{lemma}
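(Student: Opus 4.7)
The plan is to reduce the question to the finiteness of $\int_0^\eta(-F(u))^{-1/2}\,du$ by first establishing that $T(\eta^+)=\sqrt{\hat\lambda}$ (i.e., that one may pass to the limit under the integral sign in (\ref{defT})), and then to estimate the integral at the singular endpoint $u=0$ using the two hypotheses on $g$. Write $T(\alpha)=I_1(\alpha)+I_2(\alpha)$ with $I_1(\alpha)=\tfrac{1}{\sqrt 2}\int_0^\eta(F(\alpha)-F(u))^{-1/2}\,du$ and $I_2(\alpha)=\tfrac{1}{\sqrt 2}\int_\eta^\alpha(F(\alpha)-F(u))^{-1/2}\,du$; both are well defined for $\alpha\in(\eta,\beta_2)$ by (\ref{D}). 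Since $F(u)\le 0$ on $(0,\eta)$ and $F(\alpha)\downarrow 0$ as $\alpha\to\eta^+$, the integrand of $I_1$ is monotone in $\alpha$ and converges pointwise to $(-F(u))^{-1/2}$, so monotone convergence yields $I_1(\alpha)\to\tfrac{1}{\sqrt 2}\int_0^\eta(-F(u))^{-1/2}\,du$, whether this limit is finite or not. For $I_2$, the substitution $u=\eta+(\alpha-\eta)s$ together with the mean value theorem gives $F(\alpha)-F(\eta+(\alpha-\eta)s)=f(\xi)(\alpha-\eta)(1-s)$ with $\xi\to\eta$, hence
$$I_2(\alpha)=\tfrac{1}{\sqrt 2}\int_0^1\sqrt{\tfrac{\alpha-\eta}{f(\xi)(1-s)}}\,ds\longrightarrow 0$$
as $\alpha\to\eta^+$ since $f(\eta)>0$. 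Thus $T(\eta^+)=\sqrt{\hat\lambda}$.

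Given this identification, near $u=\eta$ one has $-F(u)\sim f(\eta)(\eta-u)$, producing an integrable singularity, while the integrand is continuous on compact subsets of $(0,\eta)$; so finiteness of $\int_0^\eta(-F(u))^{-1/2}\,du$ is governed solely by its behaviour as $u\to 0^+$. For (i), set $L=-\lim_{u\to 0^+}g(u)\in[0,\infty)$; for any $\epsilon>0$ there exists $\delta>0$ with $f(u)\ge-(L+\epsilon)u$ on $(0,\delta)$, so $-F(u)\le(L+\epsilon)u^2/2$ and $(-F(u))^{-1/2}\ge\sqrt{2/(L+\epsilon)}\,u^{-1}$, which is non-integrable at $0$; hence $T(\eta^+)=\infty$. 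For (ii), the hypothesis $\lim_{u\to 0^+}u^p g(u)\in[-\infty,0)$ for some $p\in(0,1)$ yields constants $c,\delta>0$ with $f(u)\le -c\,u^{1-p}$ on $(0,\delta)$; integrating gives $-F(u)\ge c\,u^{2-p}/(2-p)$, so $(-F(u))^{-1/2}\le C\,u^{-(2-p)/2}$ with exponent strictly less than $1$, an integrable bound at $0$. Positivity of $T(\eta^+)$ is immediate from positivity of the integrand, so $T(\eta^+)\in(0,\infty)$.

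The main obstacle is the first step: rigorously passing to the limit $\alpha\to\eta^+$ inside the improper integral defining $T(\alpha)$, in particular controlling $I_2$ where the integrand is singular at the upper endpoint $u=\alpha$. The substitution and mean-value argument above handle this cleanly, after which the remainder of the proof reduces to the elementary power-function comparisons near $u=0$ outlined above.
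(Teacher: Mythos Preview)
Your proof is correct, and the core estimates near $u=0$ (comparing $-F(u)$ with $u^2$ in case~(i) and with $u^{2-p}$ in case~(ii)) are exactly those of the paper. The organizational difference is that you first establish the identification
\[
T(\eta^+)=\tfrac{1}{\sqrt{2}}\int_0^\eta(-F(u))^{-1/2}\,du
\]
rigorously, via monotone convergence for $I_1$ and the mean-value/substitution argument for $I_2$, and only then analyze the singular integral at $u=0$. The paper does not separate these steps: for (i) it works with $T(\alpha)$ for $\alpha>\eta$ directly, bounding $F(\alpha)-F(u)<\varepsilon+c_1u^2$ with $\varepsilon=F(\alpha)$ small, evaluating the resulting elementary integral, and letting $\varepsilon\to0$; for (ii) it simply writes $T(\eta^+)=\int_0^\eta(-F(u))^{-1/2}\,du$ without justifying the passage to the limit. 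Your route is cleaner and in fact supplies the missing justification for the identity the paper uses in part~(ii); the paper's route for~(i) has the minor advantage of avoiding any appeal to monotone convergence, at the cost of a slightly more ad hoc computation.
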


\begin{proof}
(I) Assume that $\lim\limits_{u\rightarrow 0^{+}}g(u)\in (-\infty ,0]$. By
L'H\"{o}pital's rule, we see that 
\begin{equation*}
\lim_{u\rightarrow 0^{+}}\frac{-F(u)}{u^{2}}=\lim_{u\rightarrow 0^{+}}\frac{%
-g(u)}{2}\in \lbrack 0,\infty ).
\end{equation*}%
There exist $c_{1}>0$ and $\delta _{1}\in (0,\eta )$ such that%
\begin{equation}
0<\frac{-F(u)}{u^{2}}<c_{1}\text{ \ for }0<u<\delta _{1}.  \label{3a}
\end{equation}%
Let $\varepsilon >0$. Since $F(\eta )=0$, there exists $\tilde{\delta}>0$
such that $0<F(\alpha )<\varepsilon $ for $\eta <\alpha <\eta +\tilde{\delta}
$. So by (\ref{D}) and (\ref{3a}), we observe that%
\begin{equation*}
0<F(\alpha )-F(u)<\varepsilon -F(u)<\varepsilon +c_{1}u^{2}
\end{equation*}%
for $0<u<\delta _{1}$ and $\eta <\alpha <\eta +\tilde{\delta}$. Then%
\begin{eqnarray*}
T(\eta ^{+}) &=&\lim_{\alpha \rightarrow \eta ^{+}}\frac{1}{\sqrt{2}}%
\int_{0}^{\alpha }\frac{1}{\sqrt{F(\alpha )-F(u)}}du\geq \lim_{\alpha
\rightarrow \eta ^{+}}\frac{1}{\sqrt{2}}\int_{0}^{\delta _{1}}\frac{1}{\sqrt{%
F(\alpha )-F(u)}}du \\
&\geq &\frac{1}{\sqrt{2}}\int_{0}^{\delta _{1}}\frac{1}{\sqrt{\varepsilon
+c_{1}u^{2}}}du=\frac{1}{\sqrt{2c_{1}}}\left[ \ln \left( \sqrt{c_{1}}\delta
_{1}+\sqrt{c_{1}\delta _{1}^{2}+\varepsilon }\right) -\ln \left( \sqrt{%
\varepsilon }\right) \right] .
\end{eqnarray*}%
Since $\varepsilon $ is arbitrary, we see that $T(\eta ^{+})=\infty $. Thus
the statement (i) holds.

(II) Assume that $\lim\limits_{u\rightarrow 0^{+}}u^{p}g(u)\in \lbrack
-\infty ,0)$ for some $p\in (0,1)$. Since $\eta \in (\beta _{1},\beta _{2})$%
, and by L'H\"{o}pital's rule, we see that%
\begin{equation*}
\lim_{u\rightarrow \eta ^{-}}\frac{-F(u)}{\eta -u}=\lim_{u\rightarrow \eta
^{-}}f(u)=f(\eta )>0.
\end{equation*}%
There exists $\delta _{2}\in (0,\eta )$ such that%
\begin{equation*}
\frac{-F(u)}{\eta -u}>\frac{f(\eta )}{2}>0\text{ \ for }\delta _{2}<u<\eta 
\text{.}
\end{equation*}%
So we obtain%
\begin{equation}
0<\int_{\delta _{2}}^{\eta }\frac{1}{\sqrt{-F(u)}}du<\sqrt{\frac{2}{f(\eta )}%
}\int_{\delta _{2}}^{\eta }\frac{1}{\sqrt{\eta -u}}du<\infty .  \label{1.2}
\end{equation}%
In addition, by L'H\"{o}pital's rule, we see that%
\begin{equation*}
\lim_{u\rightarrow 0^{+}}\frac{-F(u)}{u^{2-p}}=\lim_{u\rightarrow 0^{+}}%
\frac{-f(u)}{\left( 2-p\right) u^{1-p}}=\lim_{u\rightarrow 0^{+}}\frac{%
-u^{p}g(u)}{2-p}\in (0,\infty ]\text{.}
\end{equation*}%
Then there exist $c_{2}>0$ and $\delta _{3}\in (0,\delta _{2})$ such that%
\begin{equation*}
\frac{-F(u)}{u^{2-p}}>c_{2}\text{ \ for }0<u<\delta _{3}\text{.}
\end{equation*}%
Since $1<2-p<2$, we see that%
\begin{equation}
0<\int_{0}^{\delta _{3}}\frac{1}{\sqrt{-F(u)}}du<\frac{1}{\sqrt{c_{2}}}%
\int_{0}^{\delta _{3}}\frac{1}{\sqrt{u^{2-p}}}du<\infty \text{.}  \label{1.4}
\end{equation}%
By (\ref{1.2}), (\ref{1.4}) and continuity, then%
\begin{equation*}
0<T(\eta ^{+})=\int_{0}^{\delta _{3}}\frac{1}{\sqrt{-F(u)}}du+\int_{\delta
_{3}}^{\delta _{2}}\frac{1}{\sqrt{-F(u)}}du+\int_{\delta _{2}}^{\eta }\frac{1%
}{\sqrt{-F(u)}}du<\infty .
\end{equation*}%
Thus the statement (ii) holds. The proof is complete.
\end{proof}

\begin{lemma}
\label{L4}Consider (\ref{eq1}). Then the following statements (i)--(ii) hold.

\begin{itemize}
\item[(i)] Assume that $\beta _{2}=\infty $. Then%
\begin{equation*}
\left\{ 
\begin{array}{ll}
\lim\limits_{\alpha \rightarrow \infty }T(\alpha )=\infty & \text{if }%
\lim\limits_{u\rightarrow \infty }g(u)=0\text{,}\smallskip \\ 
\lim\limits_{\alpha \rightarrow \infty }T(\alpha )\in (0,\infty ) & \text{if 
}\lim\limits_{u\rightarrow \infty }g(u)\in (0,\infty )\text{,}\smallskip \\ 
\lim\limits_{\alpha \rightarrow \infty }T(\alpha )=0 & \text{if }%
\lim\limits_{u\rightarrow \infty }g(u)=\infty \text{.}%
\end{array}%
\right.
\end{equation*}

\item[(ii)] Assume that $\beta _{2}<\infty $. Then%
\begin{equation*}
\lim\limits_{\alpha \rightarrow \beta _{2}^{-}}T(\alpha )=\infty \text{ \ if 
}\lim\limits_{u\rightarrow \beta _{2}^{-}}\frac{f(u)}{\beta _{2}-u}=0,
\end{equation*}%
and%
\begin{equation*}
\lim\limits_{\alpha \rightarrow \beta _{2}^{-}}T(\alpha )\in (0,\infty )%
\text{ \ if }\lim\limits_{u\rightarrow \beta _{2}^{-}}\frac{f(u)}{\left(
\beta _{2}-u\right) \left[ -\ln \left( \beta _{2}-u\right) \right] ^{\tau }}%
=0\text{ for some }\tau >2.
\end{equation*}
\end{itemize}
\end{lemma}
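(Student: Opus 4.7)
The plan is to analyze the time map $T(\alpha)$ near the right endpoint of its natural domain by a rescaling that exposes the leading asymptotic behaviour of $f$ at infinity (in (i)) or near $\beta_{2}$ (in (ii)).

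For part (i), I would substitute $u=\alpha v$ in the time-map formula (\ref{defT}) to obtain
\[
T(\alpha)=\frac{1}{\sqrt{2}}\int_{0}^{1}\frac{dv}{\sqrt{h_{\alpha}(v)}},\qquad h_{\alpha}(v)\equiv \frac{F(\alpha)-F(\alpha v)}{\alpha^{2}}=\int_{v}^{1}g(\alpha s)\,s\,ds.
\]
If $g(u)\to L$ as $u\to\infty$, a direct pointwise computation gives $h_{\alpha}(v)\to L(1-v^{2})/2$ for every $v\in(0,1)$. When $L\in(0,\infty)$, a dominated-convergence argument (using that $h_{\alpha}(v)$ is bounded below by a constant multiple of $1-v$ near $v=1$, via $h_{\alpha}(v)\sim g(\alpha)(1-v)$, together with uniform lower bounds on compact subsets of $(0,1)$) yields $T(\alpha)\to \pi/(2\sqrt{L})$. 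When $L=0$, pointwise divergence of $1/\sqrt{h_{\alpha}}$ combined with Fatou's lemma immediately gives $\liminf T(\alpha)=\infty$. When $L=\infty$, for each $M>0$ I pick $U$ with $g(u)\geq M$ for $u\geq U$; then $h_{\alpha}(v)\geq M(1-v^{2})/4$ once $v\geq U/\alpha$, while the tail contribution from $v\in(0,U/\alpha)$ is controlled by $(U/\alpha)\cdot O(M^{-1/2})$. This gives $\limsup T(\alpha)\leq \pi/(2\sqrt{M})$, and letting $M\to\infty$ yields $T(\alpha)\to 0$.

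For part (ii), I would split $T(\alpha)=I_{1}(\alpha)+I_{2}(\alpha)$ with $I_{1}=\int_{0}^{\eta}$ and $I_{2}=\int_{\eta}^{\alpha}$. Since $F(u)<0\leq F(\beta_{2})$ on $(0,\eta)$, the integrand of $I_{1}$ is bounded and dominated convergence gives $I_{1}(\alpha)\to \int_{0}^{\eta}du/\sqrt{F(\beta_{2})-F(u)}\in(0,\infty)$. For $I_{2}$, I substitute $r=\beta_{2}-u$, $R=\beta_{2}-\alpha$, and set $\phi(\tau)\equiv f(\beta_{2}-\tau)$, rewriting
\[
I_{2}(\alpha)=\int_{R}^{\beta_{2}-\eta}\frac{dr}{\sqrt{\int_{R}^{r}\phi(\tau)\,d\tau}}.
\]
Under the first hypothesis $\phi(\tau)=o(\tau)$, so for any $\varepsilon>0$ there is $\delta>0$ with $\phi(\tau)\leq \varepsilon\tau$ on $(0,\delta)$, giving $\int_{R}^{r}\phi\leq \varepsilon(r^{2}-R^{2})/2$ and
\[
I_{2}(\alpha)\geq \sqrt{2/\varepsilon}\,\ln\!\left(\frac{\delta+\sqrt{\delta^{2}-R^{2}}}{R}\right)\to \infty \quad \text{as } R\to 0^{+},
\]
so $T(\alpha)\to\infty$. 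Under the second hypothesis I would obtain a matching \emph{upper} bound on $I_{2}$ using the assumed growth of $\phi$ involving $(-\ln\tau)^{\tau}$; the requirement $\tau>2$ is precisely the threshold for which the resulting logarithmic integral converges as $R\to 0$, leaving $I_{2}=O(1)$ and $T(\alpha)$ finite.

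The main obstacle lies in the two upper-bound cases, namely $L=\infty$ in (i) and the second claim of (ii). Lower bounds come essentially for free from Fatou's lemma, but the upper bounds demand careful uniform control of $h_{\alpha}$ or of $\int_{R}^{r}\phi$. In (i), the boundary layer $v\in(0,U/\alpha)$ must be handled separately since the hypothesis only constrains $g$ at infinity; here the potential singularity $f(0^{+})=-\infty$ complicates the domination and forces me to use the finiteness of $F$ guaranteed by (P$_{2}$). In (ii), the delicate step is pinning down the asymptotics of $\int_{R}^{r}\phi(\tau)\,d\tau$ for $R$ small, since the logarithm contributes uniformly and the threshold $\tau>2$ is exactly what tips the time map from divergence to convergence.
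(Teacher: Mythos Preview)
Your approach for part~(i) is correct and rests on the same rescaling $u=\alpha v$ that the paper uses (the paper writes $L_\alpha(t)=1-F(\alpha t)/F(\alpha)$, which is proportional to your $h_\alpha$). The difference is one of packaging. The paper handles the three cases by direct elementary comparisons of $F(\alpha)-F(u)$ with $\tfrac{\varepsilon}{2}(\alpha^2-u^2)$, and in the superlinear case $g\to\infty$ it first argues via L'H\^opital that $f'(u)>0$ for large $u$ and then exploits the resulting concavity of $L_\alpha$ to obtain the linear lower bound $L_\alpha(t)\gtrsim 1-t$ near $t=1$. Your route through Fatou and dominated convergence is more uniform and, in the $g\to\infty$ case, sidesteps the $f'>0$ step entirely by bounding $h_\alpha(v)\ge\tfrac{M}{2}(1-v^2)$ directly from $g\ge M$; this is a genuine simplification, since the paper's use of L'H\^opital to conclude $f'\to\infty$ from $g\to\infty$ is not rigorously justified without extra assumptions on $f'$. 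The one place your sketch needs care is the dominating function for the case $L\in(0,\infty)$, but the bound $h_\alpha(v)\ge c(1-v)$ on $[1/2,1)$ together with $h_\alpha(v)\ge h_\alpha(1/2)-O(\alpha^{-2})$ on $[0,1/2]$ (using that $F(\alpha/2)-F(\alpha v)>0$ for $\alpha/2>\eta$) supplies it.

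For part~(ii) the paper gives no argument at all: it cites \cite{Laetsch} and \cite{Lee2} and omits the proof. Your splitting $T=I_1+I_2$ with the substitution $r=\beta_2-u$ is the natural way to carry this out, and your divergence argument in the first case is correct. For the second case you are right that the threshold $\tau>2$ is precisely what makes $\int dr\,/\,[r(-\ln r)^{\tau/2}]$ converge. Note, however, that the hypothesis as printed in the lemma (limit $=0$) is an \emph{upper} bound on $f$ and cannot by itself furnish the lower bound on $\int_R^r\phi$ that your upper bound on $I_2$ requires; compare with Theorem~\ref{T1}(ii), where the same condition is stated as $\in(0,\infty]$. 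Your phrase ``assumed growth of $\phi$'' shows you are already reading the hypothesis in that sense.
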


\begin{proof}
(I) Assume that $\beta _{2}=\infty $. Then we consider three cases.

Case 1. Assume that $\lim\limits_{u\rightarrow \infty }g(u)=0$. For any $%
\varepsilon >0$, there exists $N_{1}>\eta $ such that $f(u)<\varepsilon u$
for $u\geq N_{1}$. Let $\alpha >2N_{1}$. Then 
\begin{equation}
F(\alpha )-F(u)=\int_{u}^{\alpha }f(t)dt<\varepsilon \int_{u}^{\alpha }tdt=%
\frac{\varepsilon }{2}\left( \alpha ^{2}-u^{2}\right) \text{ \ for }%
N_{1}\leq u<\alpha .  \label{5a}
\end{equation}%
Since $1/2>N_{1}/\alpha $, and by (\ref{5a}), we observe that%
\begin{eqnarray}
T(\alpha ) &>&\frac{1}{\sqrt{2}}\int_{N_{1}}^{\alpha }\frac{1}{\sqrt{%
F(\alpha )-F(u)}}du  \notag \\
&>&\frac{1}{\sqrt{\varepsilon }}\int_{N_{1}}^{\alpha }\frac{1}{\sqrt{\alpha
^{2}-u^{2}}}du=\frac{1}{\sqrt{\varepsilon }}\left( \frac{\pi }{2}-\arcsin 
\frac{N_{1}}{\alpha }\right)  \notag \\
&>&\frac{1}{\sqrt{\varepsilon }}\left( \frac{\pi }{2}-\arcsin \frac{1}{2}%
\right) =\frac{\pi }{3\sqrt{\varepsilon }}.  \label{5b}
\end{eqnarray}%
Since $\varepsilon $ is arbitrary, and by (\ref{5b}), we see that $%
\lim\limits_{\alpha \rightarrow \infty }T(\alpha )=\infty $.

Case 2. Assume that $\lim\limits_{u\rightarrow \infty }g(u)\in (0,\infty )$.
There exist $\varepsilon _{1}>0,$ $\varepsilon _{2}>0$ and $N_{2}>\eta $
such that $0<\varepsilon _{1}u<f(u)<\varepsilon _{2}u$ for $u>N_{2}$. Let $%
\alpha >2N_{2}$. By similar argument in Case 1, we obtain%
\begin{equation}
\lim\limits_{\alpha \rightarrow \infty }T(\alpha )>\frac{\pi }{3\sqrt{%
\varepsilon _{2}}}.  \label{5d}
\end{equation}%
By (P$_{1}$)--(P$_{2}$) and (\ref{dF}), we observe that%
\begin{equation*}
F^{\prime }(u)\left\{ 
\begin{array}{ll}
<0 & \text{for }0<u<\beta _{1}\text{,} \\ 
=0 & \text{for }u=\beta _{1}\text{,} \\ 
>0 & \text{for }u>\beta _{1}\text{,}%
\end{array}%
\right. \text{ \ and \ }F(u)\left\{ 
\begin{array}{ll}
<0 & \text{for }0<u<\eta \text{,} \\ 
=0 & \text{for }u=\eta \text{,} \\ 
>0 & \text{for }u>\eta \text{.}%
\end{array}%
\right.
\end{equation*}%
Then%
\begin{equation}
F(\alpha )-F(u)>F(2N_{2})-F(N_{2})>0\text{ \ for }0<u<N_{2}\text{.}
\label{5e}
\end{equation}%
In addition, we observe that%
\begin{equation}
F(\alpha )-F(u)=\int_{u}^{\alpha }f(t)dt>\varepsilon _{1}\int_{u}^{\alpha
}tdt=\frac{\varepsilon _{1}}{2}\left( \alpha ^{2}-u^{2}\right) \text{ \ for }%
N_{2}\leq u<\alpha .  \label{5c}
\end{equation}%
Since $1/2>N_{2}/\alpha $, and by (\ref{5e}) and (\ref{5c}), we see that 
\begin{eqnarray*}
T(\alpha ) &=&\frac{1}{\sqrt{2}}\int_{0}^{N_{2}}\frac{1}{\sqrt{F(\alpha
)-F(u)}}du+\frac{1}{\sqrt{2}}\int_{N_{2}}^{\alpha }\frac{1}{\sqrt{F(\alpha
)-F(u)}}du \\
&<&\frac{1}{\sqrt{2}}\int_{0}^{N_{2}}\frac{1}{\sqrt{F(2N_{2})-F(N_{2})}}du+%
\frac{1}{\sqrt{\varepsilon _{1}}}\int_{N_{2}}^{\alpha }\frac{1}{\sqrt{\alpha
^{2}-u^{2}}}du \\
&=&\frac{N_{2}}{\sqrt{2}\sqrt{F(2N_{2})-F(N_{2})}}+\frac{1}{\sqrt{%
\varepsilon _{1}}}\left( \frac{\pi }{2}-\arcsin \frac{N_{2}}{\alpha }\right)
\\
&<&\frac{N_{2}}{\sqrt{2}\sqrt{F(2N_{2})-F(N_{2})}}+\frac{1}{\sqrt{%
\varepsilon _{1}}}\frac{\pi }{2}<\infty .
\end{eqnarray*}%
So by (\ref{5d}), $\lim\limits_{\alpha \rightarrow \infty }T(\alpha )\in
(0,\infty ).$

Case 3. Assume that $\lim\limits_{u\rightarrow \infty }g(u)=\infty $. It
follows that $\lim\limits_{u\rightarrow \infty }f(u)=\infty $. So by L'H\"{o}%
pital's rule, then%
\begin{equation*}
\infty =\lim\limits_{u\rightarrow \infty }g(u)=\lim\limits_{u\rightarrow
\infty }f^{\prime }(u),
\end{equation*}%
which implies that there exists $N_{3}>\eta $ such that $f^{\prime }(u)>0$
for $u>N_{3}$. Let 
\begin{equation*}
L_{\alpha }(t)\equiv 1-\frac{F(\alpha t)}{F(\alpha )}\text{ \ for }0\leq
t\leq 1\text{ and }\alpha >N_{3}.
\end{equation*}%
Given $\alpha >N_{3}$. By (P$_{1}$), then%
\begin{equation}
L_{\alpha }(0)=L_{\alpha }(\frac{\eta }{\alpha })=1\text{ \ and \ }L_{\alpha
}^{\prime }(t)=-\frac{\alpha f(\alpha t)}{F(\alpha )}\left\{ 
\begin{array}{ll}
>0 & \text{for }0<t<\frac{\beta _{1}}{\alpha }, \\ 
=0 & \text{for }t=\frac{\beta _{1}}{\alpha }, \\ 
<0 & \text{for }\frac{\beta _{1}}{\alpha }<t<1.%
\end{array}%
\right.  \label{5f}
\end{equation}%
Since $N_{3}/\alpha >\eta /\alpha $, and by (\ref{5f}), we observe that%
\begin{equation}
L_{\alpha }(t)\geq L_{\alpha }(\frac{N_{3}}{\alpha })=1-\frac{F(N_{3})}{%
F(\alpha )}>0\text{ \ for }0\leq t\leq \frac{N_{3}}{\alpha }.  \label{5g}
\end{equation}%
Since%
\begin{equation*}
L_{\alpha }(1)=0\text{ \ and \ }L_{\alpha }^{\prime \prime }(t)=-\frac{%
\alpha ^{2}f^{\prime }(\alpha t)}{F(\alpha )}<0\text{ \ for }\frac{N_{3}}{%
\alpha }<t<1,
\end{equation*}%
we see that%
\begin{equation}
L_{\alpha }(t)>\frac{L_{\alpha }(\frac{N_{3}}{\alpha })}{1-\frac{N_{3}}{%
\alpha }}\left( 1-t\right) \text{ \ for }\frac{N_{3}}{\alpha }<t<1.
\label{5h}
\end{equation}%
By (\ref{5g}) and (\ref{5h}), then%
\begin{eqnarray}
T(\alpha ) &=&\frac{\alpha }{\sqrt{2}}\int_{0}^{1}\frac{1}{\sqrt{F(\alpha
)-F(\alpha t)}}dt\text{ \ (}t=\frac{u}{\alpha }\text{)}  \notag \\
&=&\frac{\alpha }{\sqrt{2F(\alpha )}}\int_{0}^{1}\frac{1}{\sqrt{1-F(\alpha
t)/F(\alpha )}}dt  \notag \\
&=&\frac{\alpha }{\sqrt{2F(\alpha )}}\left[ \int_{0}^{\frac{N_{3}}{\alpha }}%
\frac{1}{\sqrt{L_{\alpha }(t)}}dt+\int_{\frac{N_{3}}{\alpha }}^{1}\frac{1}{%
\sqrt{L_{\alpha }(t)}}dt\right]  \notag \\
&\leq &\frac{\alpha }{\sqrt{2F(\alpha )}}\left[ \int_{0}^{\frac{N_{3}}{%
\alpha }}\frac{1}{\sqrt{L_{\alpha }(\frac{N_{3}}{\alpha })}}dt+\sqrt{\frac{1-%
\frac{N_{3}}{\alpha }}{L_{\alpha }(\frac{N_{3}}{\alpha })}}\int_{\frac{N_{3}%
}{\alpha }}^{1}\frac{1}{\sqrt{1-t}}dt\right]  \notag \\
&=&\frac{\alpha }{\sqrt{2F(\alpha )}}\left[ \frac{N_{3}}{\alpha \sqrt{%
L_{\alpha }(\frac{N_{3}}{\alpha })}}+\frac{2}{\sqrt{L_{\alpha }(\frac{N_{3}}{%
\alpha })}}\left( 1-\frac{N_{3}}{\alpha }\right) \right] .  \label{5i}
\end{eqnarray}%
Since $\lim\limits_{\alpha \rightarrow \infty }F(\alpha )=\infty $, and by
L'H\"{o}pital's rule and (\ref{5g}), we see that%
\begin{equation*}
\lim_{\alpha \rightarrow \infty }\frac{F(\alpha )}{\alpha ^{2}}=\lim_{\alpha
\rightarrow \infty }\frac{g(\alpha )}{2}=\infty \text{ \ and \ }\lim_{\alpha
\rightarrow \infty }L_{\alpha }(\frac{N_{3}}{\alpha })=1\text{,}
\end{equation*}%
from which it follows that by (\ref{5i}), $\lim\limits_{\alpha \rightarrow
\infty }T(\alpha )=0$.

By Cases 1--3, the statement (i) holds.

(II) Assume that $\beta _{2}<\infty $. The proofs follow from \cite[Lemma 2.6%
]{Laetsch} and \cite[Theorem 5.2]{Lee2} after slight modifications. We omit
the proofs. The proof is complete.
\end{proof}

\begin{lemma}
\label{L5}Consider (\ref{eq1}). Assume that (H$_{2}$) and (H$_{3}$) hold.
Let 
\begin{equation*}
\xi \equiv \left\{ 
\begin{array}{ll}
\rho & \text{if }\theta (\beta _{2}^{-})>0,\smallskip \\ 
\beta _{2} & \text{if }\theta (\beta _{2}^{-})\leq 0,%
\end{array}%
\right.
\end{equation*}%
where $\rho $ is defined in Lemma \ref{L1}(ii). Then the following statement
(i)--(iii) hold:

\begin{itemize}
\item[(i)] If $\eta <\sigma $, then $T^{\prime }(\alpha )<0$ for $\eta
<\alpha \leq \sigma .$

\item[(ii)] 
\begin{equation}
T^{\prime \prime }(\alpha )+\frac{3+q(\alpha )}{\alpha }T^{\prime }(\alpha
)>0\text{ \ for }\sigma <\alpha <\xi ,  \label{5.10}
\end{equation}%
where $q(u)\equiv -u\theta ^{\prime }(u)/\theta (u)$. Moreover, $T(\alpha )$
has at most one critical point, a local minimum, on $(\sigma ,\xi )$.

\item[(iii)] If $\theta (\beta _{2}^{-})>0$, then $T^{\prime }(\alpha )>0$
for $\rho \leq \alpha <\beta _{2}.$
\end{itemize}
\end{lemma}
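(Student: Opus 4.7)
The plan is to base the entire proof on the time-map identity
\[
2\sqrt{2}\,\alpha\, T'(\alpha) \;=\; \int_{0}^{\alpha}\frac{\theta(\alpha)-\theta(u)}{[F(\alpha)-F(u)]^{3/2}}\,du,
\]
which I would derive by substituting $u=\alpha s$ in (\ref{defT}), differentiating under the integral sign (the endpoint singularity is of the integrable form $(1-s)^{-1/2}$), and then applying the elementary identity $\alpha f(\alpha)-uf(u)=2[F(\alpha)-F(u)]+[\theta(u)-\theta(\alpha)]$ to recombine the pieces. Once this is in hand, part (i) follows at once: by Lemma \ref{L1}(ii), $\theta$ is strictly decreasing on $(0,\sigma]$, so for $\eta<\alpha\le\sigma$ and $0<u<\alpha$ we have $\theta(u)>\theta(\alpha)$, the integrand is negative, and $T'(\alpha)<0$. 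Part (iii) is equally direct: when $\theta(\beta_{2}^{-})>0$ and $\rho\le\alpha<\beta_{2}$, split $(0,\alpha)$ at $\sigma$; on $(0,\sigma)$, Lemma \ref{L1}(ii)(b) gives $\theta(u)<0\le\theta(\alpha)$, and on $[\sigma,\alpha)$ the increasing character of $\theta$ gives $\theta(u)<\theta(\alpha)$. The integrand is nonnegative and strictly positive on a set of positive measure, so $T'(\alpha)>0$.

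For part (ii) I would differentiate the $T'$-identity a second time in the same way. A routine but lengthy calculation, using the relation $u\theta'(u)=-q(u)\theta(u)$ to package the mixed terms, should produce the master formula
\[
2\sqrt{2}\,\alpha^{2}\!\left[T''(\alpha)+\frac{3+q(\alpha)}{\alpha}T'(\alpha)\right] = \int_{0}^{\alpha}\!\frac{\theta(u)\,[q(u)-q(\alpha)]}{[F(\alpha)-F(u)]^{3/2}}\,du \;+\; \frac{3}{2}\!\int_{0}^{\alpha}\!\frac{[\theta(\alpha)-\theta(u)]^{2}}{[F(\alpha)-F(u)]^{5/2}}\,du.
\]
The second integral is manifestly nonnegative, so it suffices to show the first integral is nonnegative as well. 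By Lemma \ref{L1}(ii), $\theta(u)<0$ on $(0,\xi)$, hence on $(0,\alpha)$ whenever $\alpha\in(\sigma,\xi)$. The sign of the first integrand therefore reduces to proving $q(u)\le q(\alpha)$ for $0<u<\alpha$, i.e.\ that $q$ is nondecreasing on $(0,\xi)$.

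This monotonicity of $q$ is the main obstacle, and it is precisely where hypothesis (H$_{3}$) enters. A direct check from Lemma \ref{L1}(ii) gives $q<0$ on $(0,\sigma)$, $q(\sigma)=0$, and $q>0$ on $(\sigma,\xi)$, so the comparison $q(u)<q(\alpha)$ is automatic whenever $u<\sigma\le\alpha$. For the remaining case $\sigma<u<\alpha<\xi$, I would differentiate $\theta(u)q(u)=-u\theta'(u)$ to obtain
\[
[\theta(u)]^{2}\,q'(u) \;=\; u[\theta'(u)]^{2}-\theta(u)\,[u\theta'(u)]',
\]
rewrite the right-hand side in terms of $f,f',f''$, and then invoke the characterisation (\ref{h3a}) from Remark \ref{R1}(i) that (H$_{3}$) is equivalent to $ff'+uff''-u(f')^{2}\le 0$. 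Combined with the signs $f>0$, $\theta<0$, $\theta'>0$, and $uf'<f$ holding throughout $(\sigma,\xi)$, this should give the lower bound $f-uf'-u^{2}f''\ge (f-uf')(f+uf')/f>0$, which forces $-\theta(u\theta')'>0$ and hence $q'(u)\ge 0$. I expect this algebraic verification to be the bulk of the real work.

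Finally, the second sentence of part (ii) is a short consequence of the differential inequality (\ref{5.10}). At any critical point $\alpha_{0}\in(\sigma,\xi)$ with $T'(\alpha_{0})=0$, (\ref{5.10}) forces $T''(\alpha_{0})>0$, so every critical point of $T$ on $(\sigma,\xi)$ is a strict local minimum. Two strict local minima on an interval would force a local maximum between them, at which $T'=0$ and hence (\ref{5.10}) would again give $T''>0$, a contradiction. Therefore $T$ admits at most one critical point on $(\sigma,\xi)$.
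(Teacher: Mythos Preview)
Your approach mirrors the paper's almost exactly: the same $T'$ formula via $\theta(\alpha)-\theta(u)$, the same master identity for $T''+\frac{3+q(\alpha)}{\alpha}T'$ (your term $\theta(u)[q(u)-q(\alpha)]$ is precisely the paper's $Q(\alpha)-Q(u)$ once one unpacks $Q(u)\equiv u\theta'(u)+q(\alpha)\theta(u)$), the same reduction to $q'>0$ on $(\sigma,\xi)$, and the same treatment of parts (i), (iii) and of the range $u\le\sigma$ in part (ii).

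The one genuine gap is in your argument for $q'>0$ on $(\sigma,\xi)$. Your lower bound
\[
f-uf'-u^{2}f''\;\ge\;\frac{(f-uf')(f+uf')}{f}
\]
is indeed equivalent to (H$_3$), but the right-hand side need not be positive: nothing in (H$_2$)--(H$_3$) forces $f+uf'>0$ throughout $(\sigma,\xi)$. Since $uf'/f$ equals $1$ at $\sigma$ and is nonincreasing by (\ref{h3a}), it may well drop below $-1$; in fact when $\beta_2<\infty$ and $f'(\beta_2^-)<0$ one has $uf'/f\to-\infty$ as $u\to\beta_2^-$. Thus you cannot conclude $(u\theta')'=\theta'+u\theta''>0$ on all of $(\sigma,\xi)$, and your argument does not cover the region where $(u\theta')'<0$. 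The paper closes exactly this gap by a case split on the sign of $(u\theta')'$: where $(u\theta')'\ge 0$ your computation works verbatim; where $(u\theta')'<0$ the paper uses (H$_3$) in the form $f'\theta'\le f\theta''$ to obtain $-[(u\theta')']\,uf(u)-u(\theta')^{2}<0$, which rearranges to $u(\theta')^{2}/[-(u\theta')']>uf(u)>-\theta(u)$ and hence $q'(u)>0$. With this additional case your outline goes through.
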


\begin{proof}
By (\ref{defT}), we compute%
\begin{equation}
T^{\prime }(\alpha )=\frac{1}{2\sqrt{2}\alpha }\int_{0}^{\alpha }\frac{%
2B(\alpha ,u)-A(\alpha ,u)}{\left[ B(\alpha ,u)\right] ^{3/2}}du  \label{dT}
\end{equation}%
and%
\begin{equation}
T^{\prime \prime }(\alpha )=\frac{1}{4\sqrt{2}\alpha ^{2}}\int_{0}^{\alpha }%
\frac{3A^{2}(\alpha ,u)-4A(\alpha ,u)B(\alpha ,u)-2B(\alpha ,u)C(\alpha ,u)}{%
B^{5/2}(\alpha ,u)}du  \label{dT2}
\end{equation}%
where $A(\alpha ,u)\equiv \alpha f(\alpha )-uf(u)$, $B(\alpha ,u)\equiv
F(\alpha )-F(u)$ and $C(\alpha ,u)\equiv \alpha ^{2}f^{\prime }(\alpha
)-u^{2}f^{\prime }(u)$. By (\ref{dF}), then%
\begin{equation}
B(\alpha ,u)=F(\alpha )-F(u)>0\text{ \ for }0<u<\alpha \text{ and }\eta
<\alpha <\beta _{2}.  \label{B}
\end{equation}

(I) Notice that%
\begin{equation}
2B(\alpha ,u)-A(\alpha ,u)=\theta (\alpha )-\theta (u).  \label{2BA}
\end{equation}%
By (\ref{2BA}) and Lemma \ref{L1}(ii), we see that 
\begin{equation}
2B(\alpha ,u)-A(\alpha ,u)=\theta (\alpha )-\theta (u)<0\text{ \ for }%
0<u<\alpha \leq \sigma .  \label{6.22}
\end{equation}%
If $\eta <\sigma $, by (\ref{dT}) and (\ref{6.22}), then $T^{\prime }(\alpha
)<0$ for $\eta <\alpha \leq \sigma $. So the statement (i) holds.

(II) Given $\alpha \in (\sigma ,\xi )$. By Lemma \ref{L1}(ii), then%
\begin{equation}
q(\alpha )=-\frac{\alpha \theta ^{\prime }(\alpha )}{\theta (\alpha )}>0%
\text{.}  \label{6.2}
\end{equation}%
By (\ref{dT}) and (\ref{dT2}), we have%
\begin{eqnarray}
&&T^{\prime \prime }(\alpha )+\frac{3+q(\alpha )}{\alpha }T^{\prime }(\alpha
)  \notag \\
&=&\frac{1}{2\sqrt{2}\alpha ^{2}}\int_{0}^{\alpha }\frac{\frac{3}{2}\left[
\theta (\alpha )-\theta (u)\right] ^{2}+B(\alpha ,u)\left[ Q(\alpha )-Q(u)%
\right] }{B^{5/2}}du  \notag \\
&\geq &\frac{1}{2\sqrt{2}\alpha ^{2}}\int_{0}^{\alpha }\frac{B(\alpha ,u)%
\left[ Q(\alpha )-Q(u)\right] }{B^{5/2}}du,  \label{TT}
\end{eqnarray}%
where $Q(u)\equiv u\theta ^{\prime }(u)+q(\alpha )\theta (u)$. By (\ref{6.2}%
), Lemmas \ref{LL1} and \ref{L1}(ii), we see that%
\begin{equation}
Q(0^{+})=Q(\alpha )=0\text{ \ and \ }Q(u)<0\text{ \ for }0<u\leq \sigma .
\label{5.3}
\end{equation}%
We assert that%
\begin{equation}
q^{\prime }(u)>0\text{ on }(\sigma ,\xi ).  \label{5.4}
\end{equation}%
By Lemma \ref{L1}(ii) and (\ref{5.4}), then%
\begin{equation}
Q(u)=\theta (u)\left[ q(\alpha )-q(u)\right] <0\text{ \ for }\sigma
<u<\alpha .  \label{5.5}
\end{equation}%
So by (\ref{5.3}) and (\ref{5.5}), we obtain%
\begin{equation}
Q(\alpha )-Q(u)>0\text{ \ for }0<u<\alpha .  \label{5.6}
\end{equation}%
It follows that (\ref{5.10}) holds by (\ref{B}), (\ref{TT}) and (\ref{5.6}).
In addition, if $\alpha _{1}$ is a critical point of $T(\alpha )$ on $\left(
\sigma ,\xi \right) $, then%
\begin{equation*}
T^{\prime \prime }(\alpha _{1})=T^{\prime \prime }(\alpha _{1})+\frac{%
3+q(\alpha _{1})}{\alpha _{1}}T^{\prime }(\alpha _{1})>0,
\end{equation*}%
which implies that $T(\alpha _{1})$ is a local minimum value. So $T(\alpha )$
has at most one critical point, a local minimum, on $\left( \sigma ,\xi
\right) $.

(III) Assume that $\theta (\beta _{2}^{-})>0$. By Lemma \ref{L1}(ii), $%
\theta (\alpha )-\theta (u)>0$ for $0<u<\alpha $ and $\rho \leq \alpha
<\beta _{2}$. So by (\ref{dT}) and (\ref{2BA}), then $T^{\prime }(\alpha )>0$
for $\rho \leq \alpha <\beta _{2}$. So the statement (iii) holds.

Finally, we prove the assertion (\ref{5.4}). We use the similar argument in 
\cite{Hung} to prove it. Since%
\begin{equation*}
q^{\prime }(u)=-\frac{\theta (u)\theta ^{\prime }(u)+u\theta (u)\theta
^{\prime \prime }(u)-u\theta ^{\prime }(u)\theta ^{\prime }(u)}{\theta
^{2}(u)},
\end{equation*}%
we see that $q^{\prime }(u)>0$ on $(\sigma ,\xi )$ if, and only if%
\begin{equation}
\frac{-\theta ^{\prime }(u)-u\theta ^{\prime \prime }(u)}{\theta ^{\prime
}(u)}<\frac{u\theta ^{\prime }(u)}{-\theta (u)}=q(u)\text{ \ for }\sigma
<u<\xi .  \label{6.1}
\end{equation}%
Let%
\begin{equation*}
I_{1}=\{u\in (\sigma ,\xi ):\theta ^{\prime }(u)+u\theta ^{\prime \prime
}(u)\geq 0\},
\end{equation*}%
\begin{equation*}
I_{2}=\{u\in (\sigma ,\xi ):\theta ^{\prime }(u)+u\theta ^{\prime \prime
}(u)<0\}.
\end{equation*}

Case 1. Assume that $u\in I_{1}$. By Lemma \ref{L1}(ii) and (\ref{6.2}), then%
\begin{equation*}
\frac{-\theta ^{\prime }(u)-u\theta ^{\prime \prime }(u)}{\theta ^{\prime
}(u)}\leq 0<q(u)\text{.}
\end{equation*}%
So by (\ref{6.1}), the assertion (\ref{5.4}) holds.

Case 2. Assume that $u\in I_{2}$. By Lemma \ref{L1}(ii) and (\ref{6.1}), we
see that $q^{\prime }(u)>0$ on $(\sigma ,\xi )$ if, and only if%
\begin{equation}
\frac{u\left[ \theta ^{\prime }(u)\right] ^{2}}{-\left[ \theta ^{\prime
}(u)+u\theta ^{\prime \prime }(u)\right] }>-\theta (u)\text{ \ for }\sigma
<u<\xi .  \label{6.3}
\end{equation}%
By (\ref{h3a}), we observe that%
\begin{equation*}
0\geq \left[ \frac{uf^{\prime }(u)}{f(u)}\right] ^{\prime }=\frac{f^{\prime
}(u)f(u)+uf^{\prime \prime }(u)f(u)-u\left[ f^{\prime }(u)\right] ^{2}}{%
f^{2}(u)}=\frac{f^{\prime }(u)\theta ^{\prime }(u)-\theta ^{\prime \prime
}(u)f(u)}{f^{2}(u)},
\end{equation*}%
which implies that%
\begin{equation}
f^{\prime }(u)\theta ^{\prime }(u)-\theta ^{\prime \prime }(u)f(u)\leq 0.
\label{6.4}
\end{equation}%
By Lemma \ref{L1}(ii) and (\ref{6.4}), we see that%
\begin{eqnarray}
&&-\left[ \theta ^{\prime }(u)+u\theta ^{\prime \prime }(u)\right] uf(u)-u%
\left[ \theta ^{\prime }(u)\right] ^{2}  \notag \\
&=&-uf(u)\theta ^{\prime }(u)-u^{2}f(u)\theta ^{\prime \prime }(u)-u\left[
\theta ^{\prime }(u)\right] ^{2}  \notag \\
&\leq &-uf(u)\theta ^{\prime }(u)-u^{2}f^{\prime }(u)\theta ^{\prime }(u)-u 
\left[ \theta ^{\prime }(u)\right] ^{2}  \notag \\
&=&\left[ -uf(u)-u^{2}f^{\prime }(u)-u\theta ^{\prime }(u)\right] \theta
^{\prime }(u)  \notag \\
&=&-2uf(u)\theta ^{\prime }(u)<0.  \label{6.5}
\end{eqnarray}%
Since $u\in I_{2}$, and by (\ref{6.5}), we see that%
\begin{equation*}
uf(u)-\frac{u\left[ \theta ^{\prime }(u)\right] ^{2}}{-\left[ \theta
^{\prime }(u)+u\theta ^{\prime \prime }(u)\right] }=\frac{-\left[ \theta
^{\prime }(u)+u\theta ^{\prime \prime }(u)\right] uf(u)-u\left[ \theta
^{\prime }(u)\right] ^{2}}{-\left[ \theta ^{\prime }(u)+u\theta ^{\prime
\prime }(u)\right] }<0<2F(u).
\end{equation*}%
So by (\ref{6.3}), the assertion (\ref{5.4}) holds.

The proof is complete.
\end{proof}

\begin{lemma}
\label{L5.5}Consider (\ref{eq1}) with $\beta _{2}=\infty $. Assume that (H$%
_{2}$) and (H$_{3}$) hold. Then the following statements (i)--(ii) hold:

\begin{itemize}
\item[(i)] If $\lim\limits_{u\rightarrow \infty }f(u)<\infty $, then $%
\lim\limits_{\alpha \rightarrow \infty }T(\alpha )=\infty .$

\item[(ii)] If $\lim\limits_{u\rightarrow \infty }f(u)=\infty $, then $%
\lim\limits_{u\rightarrow \infty }\theta (u)>0$.
\end{itemize}
\end{lemma}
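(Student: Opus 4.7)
The plan is to dispose of (i) immediately via Lemma \ref{L4}, and to prove (ii) by an asymptotic analysis of the elasticity $\phi(u)\equiv uf'(u)/f(u)$, combined with L'H\^opital's rule.

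For (i), assume $\lim_{u\to\infty}f(u)<\infty$. Then trivially $g(u)=f(u)/u\to 0$ as $u\to\infty$, and Lemma \ref{L4}(i) gives $\lim_{\alpha\to\infty}T(\alpha)=\infty$. Nothing beyond invoking the existing lemma is needed.

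For (ii), the first step is to show that $\phi(u)=uf'(u)/f(u)$ admits a finite limit $L^{\ast}\in[0,1)$ as $u\to\infty$. By Remark \ref{R1}(i) (formula (\ref{h3a})), condition (H$_3$) is equivalent to $\phi'(u)\le 0$ on $(\sigma,\infty)$, so $\phi$ is non-increasing there and $L^{\ast}\in[-\infty,\phi(\sigma^{+})]$. If $L^{\ast}<0$, then $\phi(u)<0$ for all sufficiently large $u$, i.e.\ $f'(u)<0$ eventually, which forces $f$ to be eventually decreasing and therefore bounded --- contradicting $\lim_{u\to\infty}f(u)=\infty$. On the other hand, condition (H$_2$) says $g'(u)<0$ on $(\sigma,\infty)$, which rewrites as $uf'(u)<f(u)$, i.e.\ $\phi(u)<1$ for every $u>\sigma$; since $\phi$ is non-increasing, $L^{\ast}\le\phi(u)<1$. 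Thus $L^{\ast}\in[0,1)$.

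The second step is to apply L'H\^opital's rule to the quotient $F(u)/[uf(u)]$. Both numerator and denominator tend to $\infty$ (the denominator because $f(u)\to\infty$), and the ratio of derivatives is
\begin{equation*}
\frac{f(u)}{f(u)+uf'(u)}=\frac{1}{1+\phi(u)}\longrightarrow\frac{1}{1+L^{\ast}},
\end{equation*}
with the denominator positive for large $u$ because $\phi(u)\ge L^{\ast}\ge 0$. Hence $F(u)/[uf(u)]\to 1/(1+L^{\ast})$, and consequently
\begin{equation*}
\frac{\theta(u)}{uf(u)}=\frac{2F(u)-uf(u)}{uf(u)}\longrightarrow\frac{2}{1+L^{\ast}}-1=\frac{1-L^{\ast}}{1+L^{\ast}}>0,
\end{equation*}
because $L^{\ast}<1$. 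Since $uf(u)\to\infty$, we conclude $\theta(u)\to\infty$, and in particular $\lim_{u\to\infty}\theta(u)>0$.

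The main obstacle is the derivation $L^{\ast}\in[0,1)$: the lower bound is forced by the growth assumption $\lim f=\infty$, while the strict upper bound requires both that (H$_2$) gives $\phi<1$ pointwise on $(\sigma,\infty)$ and that monotonicity of $\phi$ from (H$_3$) transfers this strict inequality to the limit. Everything else is a mechanical application of L'H\^opital.
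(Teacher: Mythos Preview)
Your proof is correct. Part (i) is identical to the paper's. For part (ii), both arguments hinge on the elasticity $\phi(u)=uf'(u)/f(u)$ being non-increasing on $(\sigma,\infty)$ (from (H$_3$)) together with the strict inequality $\phi(u)<1$ for $u>\sigma$ (from (H$_2$)). The paper's route is a shade more direct: rather than determining the limit $L^{\ast}$ and invoking L'H\^opital on $F(u)/(uf(u))$, it simply fixes a comparison point $2\sigma$, uses monotonicity to write $\phi(u)\le\phi(2\sigma)<\phi(\sigma)=1$ for $u>2\sigma$, and then bounds
\[
\theta'(u)=f(u)\bigl[1-\phi(u)\bigr]\ge f(u)\bigl[1-\phi(2\sigma)\bigr]\longrightarrow\infty,
\]
whence $\theta(u)\to\infty$. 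Your approach trades this one-line $\theta'$ estimate for an asymptotic computation of $\theta(u)/(uf(u))$; it costs an extra limit argument (and the side verification that $L^{\ast}\ge 0$ so that $(uf(u))'$ does not vanish), but it yields the same conclusion and even the sharper asymptotic $\theta(u)\sim\frac{1-L^{\ast}}{1+L^{\ast}}\,uf(u)$.
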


\begin{proof}
(I) Assume that $\lim\limits_{u\rightarrow \infty }f(u)<\infty $. It implies
that $\lim\limits_{u\rightarrow \infty }g(u)=0$. So by Lemma \ref{L4}(i), we
obtain $\lim\limits_{\alpha \rightarrow \infty }T(\alpha )=\infty $. The
statement (i) holds.

(II) Assume that $\lim\limits_{u\rightarrow \infty }f(u)=\infty $. Since $%
g^{\prime }(\sigma )=0$, we see that $\sigma f^{\prime }(\sigma )=f(\sigma )$%
. So by (\ref{h3a}), then%
\begin{equation*}
\frac{uf^{\prime }(u)}{f(u)}<\frac{2\sigma f^{\prime }(2\sigma )}{f(2\sigma )%
}<\frac{\sigma f^{\prime }(\sigma )}{f(\sigma )}=1\text{ \ for }u>2\sigma .
\end{equation*}%
It follows that%
\begin{equation*}
\lim\limits_{u\rightarrow \infty }\theta ^{\prime
}(u)=\lim\limits_{u\rightarrow \infty }f(u)\left[ 1-\frac{uf^{\prime }(u)}{%
f(u)}\right] \geq \lim\limits_{u\rightarrow \infty }f(u)\left[ 1-\frac{%
2\sigma f^{\prime }(2\sigma )}{f(2\sigma )}\right] =\infty .
\end{equation*}%
So $\lim\limits_{u\rightarrow \infty }\theta (u)=\infty $. The statement
(ii) holds. The proof is complete.
\end{proof}

\begin{lemma}
\label{L6}Consider (\ref{eq1}). Assume that (H$_{4}$) holds. Then the
following statements (i)--(ii) hold:

\begin{itemize}
\item[(i)] If $G<0$ and $\theta ^{\prime }(\beta _{2}^{-})>0$, then $%
T(\alpha )$ is strictly decreasing and then strictly increasing on $\left(
\eta ,\beta _{2}\right) $.

\item[(ii)] If $G\geq 0$, then $T(\alpha )$ is strictly increasing on $%
\left( \eta ,\beta _{2}\right) $.
\end{itemize}
\end{lemma}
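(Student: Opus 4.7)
The plan is to mimic the structure of Lemma~\ref{L5}(ii), replacing (H$_3$) by (H$_4$) at the key algebraic step. First, I would record a boundary identity: substituting $F(\eta)=0$ and $\theta(\eta)=-\eta f(\eta)$ into (\ref{dT}) gives
\[
G=2\sqrt{2}\,\eta\,T'(\eta^+),
\]
so that $G$ and $T'(\eta^+)$ have the same sign. Next, I reduce both parts to the setting of Lemma~\ref{L1}(iii)(b). For (ii), were $\theta'(\beta_2^-)\le 0$, Lemma~\ref{L1}(iii)(a) would make $\theta$ strictly decreasing, and since $\theta(\eta)=-\eta f(\eta)<0$ this would force $\theta(\eta)-\theta(u)<0$ on $(0,\eta)$, whence $G<0$, contradicting the hypothesis. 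Hence in both parts $\theta'(\beta_2^-)>0$, and Lemma~\ref{L1}(iii)(b) supplies $0<\gamma<\rho<\beta_2$; the sign data $\theta(\eta)<0$ together with Lemma~\ref{L1}(iii)(b) immediately gives $\eta<\rho$.

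Second, I would handle the easy half of the curve: for $\alpha\in[\rho,\beta_2)$, the integrand in (\ref{dT}) is strictly positive, since $\theta(u)<0\le\theta(\alpha)$ for $u\in(0,\rho)$ and $\theta$ is strictly increasing on $(\rho,\alpha)$, so $\theta(\alpha)-\theta(u)>0$ throughout. Hence $T'(\alpha)>0$ on $[\rho,\beta_2)$.

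The main step, and the principal obstacle, is to show that every critical point $\alpha_0\in(\eta,\rho)$ of $T$ is a strict local minimum. My approach is to derive, under (H$_4$), an inequality of the form
\[
T''(\alpha)+\Phi(\alpha)\,T'(\alpha)>0\quad\text{for }\alpha\in(\eta,\rho),
\]
mirroring (\ref{5.10}), so that $T'(\alpha_0)=0$ forces $T''(\alpha_0)>0$. The crucial consequence of (H$_4$) here is $\theta''(u)=-uf''(u)>0$, so that $\theta'$ is strictly increasing. Using (\ref{dT})--(\ref{dT2}) I would seek $\Phi$ for which the combined integrand rearranges as $3[\theta(\alpha)-\theta(u)]^2+2B(\alpha,u)\bigl[\widetilde Q(\alpha)-\widetilde Q(u)\bigr]$ with $\widetilde Q$ strictly increasing on $(0,\alpha)$ by virtue of $\theta''>0$. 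Choosing $\Phi$ and $\widetilde Q$ consistently so that the rearrangement holds as an algebraic identity while $\widetilde Q$ retains its monotonicity is the hardest point; it will certainly differ from the choice $(3+q(\alpha))/\alpha$ used in Lemma~\ref{L5}, which was tied to (H$_3$) rather than (H$_4$).

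With the previous step in hand, uniqueness of the critical point follows by contradiction: two zeros $\alpha_1<\alpha_2$ of $T'$ in $(\eta,\rho)$ would force $T'$ to change from $+$ to $-$ between them, producing an intermediate critical point with $T''\le 0$, contradicting the displayed inequality. For (i), $T'(\eta^+)<0<T'(\rho)$ then gives exactly one critical point $\alpha^\ast\in(\eta,\rho)$, and $T$ is strictly decreasing on $(\eta,\alpha^\ast)$ and strictly increasing on $(\alpha^\ast,\beta_2)$. For (ii), $T'(\eta^+)\ge 0$; were $T'$ non-positive anywhere in $(\eta,\rho)$, two applications of the intermediate value theorem (using $T'(\rho)>0$) would enclose a local maximum of $T$ between two zeros of $T'$, again contradicting the displayed inequality. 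Hence $T'>0$ throughout $(\eta,\beta_2)$.
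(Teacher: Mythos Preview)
Your plan is the paper's approach, and the piece you call ``the hardest point'' is in fact the simplest possible choice: take $\Phi(\alpha)=2/\alpha$, equivalently $\widetilde Q(u)=u\theta'(u)-\theta(u)$. Then $\widetilde Q'(u)=u\theta''(u)=-u^{2}f''(u)>0$ under (H$_4$), so $\widetilde Q$ is strictly increasing on all of $(0,\beta_2)$, and the identity
\[
\alpha T''(\alpha)+2T'(\alpha)=\frac{1}{4\sqrt{2}\,\alpha}\int_0^\alpha \frac{3\bigl[\theta(\alpha)-\theta(u)\bigr]^2+2B(\alpha,u)\bigl[\widetilde Q(\alpha)-\widetilde Q(u)\bigr]}{B(\alpha,u)^{5/2}}\,du>0
\]
holds for every $\alpha\in(\eta,\beta_2)$, not merely on $(\eta,\rho)$. (In the paper's notation $\widetilde Q(\alpha)-\widetilde Q(u)=2A-2B-C$, whose $u$-derivative is $u^2f''(u)<0$.) Because the inequality is global, your preliminary reduction of part~(ii) to the case $\theta'(\beta_2^-)>0$ becomes unnecessary: every critical point of $T$ on $(\eta,\beta_2)$ is automatically a strict local minimum, with no reference to $\rho$.

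One genuine repair: your handling of the boundary case $G=0$ is incomplete. ``Two applications of the intermediate value theorem'' does not work, since $T'(\eta^+)=0$ is only a one-sided limit and need not be attained, so a point with $T'<0$ need not be trapped between two zeros of $T'$. The paper argues instead that if $T'<0$ on some $(\eta,\delta)$ while $T'(\eta^+)=0$, the mean value theorem produces $\alpha_2\in(\eta,\delta)$ with $T''(\alpha_2)<0$, whence $\alpha_2 T''(\alpha_2)+2T'(\alpha_2)<0$, contradicting the displayed inequality.
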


\begin{proof}
We divide this proof into the following four steps.

\textbf{Step 1. }We prove that $\alpha T^{\prime \prime }(\alpha
)+2T^{\prime }(\alpha )>0$ for $\eta <\alpha <\beta _{2}$. By (\ref{dT}) and
(\ref{dT2}), we have%
\begin{equation}
\alpha T^{\prime \prime }(\alpha )+2T^{\prime }(\alpha )=\frac{1}{4\sqrt{2}%
\alpha }\int_{0}^{\alpha }\frac{3\left( 2B-A\right) ^{2}+2B\left[ 2A-2B-C%
\right] }{B^{5/2}}du.  \label{4.5}
\end{equation}%
We observe that%
\begin{equation*}
\frac{\partial }{\partial u}\left[ 2A(\alpha ,u)-2B(\alpha ,u)-C(\alpha ,u)%
\right] =u^{2}f^{\prime \prime }(u)<0\text{ \ for }0<u<\beta _{2}.
\end{equation*}%
It follows that%
\begin{equation*}
2A(\alpha ,u)-2B(\alpha ,u)-C(\alpha ,u)>2A(\alpha ,\alpha )-2B(\alpha
,\alpha )-C(\alpha ,\alpha )=0
\end{equation*}%
for $0<u<\alpha <\beta _{2}$. So by (\ref{B}) and (\ref{4.5}), $\alpha
T^{\prime \prime }(\alpha )+2T^{\prime }(\alpha )>0$ for $\eta <\alpha
<\beta _{2}$.

\textbf{Step 2. }We prove that if $\theta ^{\prime }(\beta _{2}^{-})>0$,
then $T^{\prime }(\alpha )>0$ for $\rho <\alpha <\beta _{2}$. Assume that $%
\theta ^{\prime }(\beta _{2}^{-})>0$. By Lemma \ref{L1}(iii), then 
\begin{equation*}
2B(\alpha ,u)-A(\alpha ,u)=\theta (\alpha )-\theta (u)>0\text{ \ for }%
0<u<\alpha \text{ and }\rho <\alpha <\beta _{2}.
\end{equation*}%
So by (\ref{dT}), $T^{\prime }(\alpha )>0$ for $\rho <\alpha <\beta _{2}.$

\textbf{Step 3. }We prove the statement (i). If $\alpha _{1}$ is the
critical point of $T(\alpha )$ on $\left( \eta ,\beta _{2}\right) $, by Step
1, then%
\begin{equation*}
\alpha _{1}T^{\prime \prime }(\alpha _{1})=\alpha _{1}T^{\prime \prime
}(\alpha _{1})+2T^{\prime }(\alpha _{1})>0,
\end{equation*}%
which implies that $T(\alpha _{1})$ is a local minimum value. So 
\begin{equation}
T(\alpha )\text{ has at most one critical point, a local minimum, on }\left(
\eta ,\beta _{2}\right) .  \label{6a}
\end{equation}%
Assume that $G<0$ and $\theta ^{\prime }(\beta _{2}^{-})>0$. Then $T^{\prime
}(\eta ^{+})=G<0$. So by (\ref{6a}) and Step 2, we see that $T(\alpha )$ is
strictly decreasing and then strictly increasing on $\left( \eta ,\beta
_{2}\right) $. Thus the statement (i) holds.

\textbf{Step 4. }We prove the statement (ii). Assume that $G\geq 0$. We
consider two cases.

Case 1. Assume that $G>0$. Then $T^{\prime }(\eta ^{+})=G>0$. So by (\ref{6a}%
), we see that $T(\alpha )$ is strictly increasing on $\left( \eta ,\beta
_{2}\right) $.

Case 2. Assume that $G=0$. Then $T^{\prime }(\eta ^{+})=G=0$. Suppose there
exists $\delta >\eta $ such that $T^{\prime }(\alpha )<0$ on $\left( \eta
,\delta \right) $. Since $T^{\prime }(\eta ^{+})=0$, there exists $\alpha
_{2}\in \left( \eta ,\delta \right) $ such that $T^{\prime \prime }(\alpha
_{2})<0$. It follows that $\alpha _{2}T^{\prime \prime }(\alpha
_{2})+2T^{\prime }(\alpha _{2})<0$, which is a contradiction by Step 1. So
by (\ref{6a}), we observe that $T^{\prime }(\alpha )>0$ on $\left( \eta
,\beta _{2}\right) $.

By Cases 1--2, the statement (ii) holds.

The proof is complete.
\end{proof}

\begin{lemma}
\label{L7}Consider (\ref{eq1}). Assume that either ((H$_{2}$) and (H$_{3}$))
or ((H$_{2}$) and (H$_{4}$)) holds. Then $G<0$ if%
\begin{equation*}
\text{either }f(\eta )-\eta f^{\prime }(\eta )\leq 0\text{ \ or }%
\lim\limits_{u\rightarrow 0^{+}}u^{\frac{1}{3}}f(u)>-\infty .
\end{equation*}
\end{lemma}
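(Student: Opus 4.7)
The plan is to first rewrite $G$ in a more tractable form using the function $\theta(u)=2F(u)-uf(u)$ that already plays a central role in the paper. Since $F(\eta)=0$ gives $\theta(\eta)=-\eta f(\eta)$, the numerator of the integrand in $G$ becomes
\begin{equation*}
-\eta f(\eta)-2F(u)+uf(u)=\theta(\eta)-\theta(u),
\end{equation*}
so $G=\int_{0}^{\eta}\frac{\theta(\eta)-\theta(u)}{[-F(u)]^{3/2}}\,du$. From Lemma \ref{L1}(ii)--(iii) I know the sign structure of $\theta'$ on $(0,\beta_{2})$ under the hypotheses: there is a single sign change, with $\theta'<0$ on $(0,\mu)$ and $\theta'>0$ on $(\mu,\beta_{2})$, where $\mu=\sigma$ under (H$_{2}$) and $\mu=\gamma$ under (H$_{4}$) (with the convention $\mu=\beta_{2}$ if $\theta'\leq 0$ on all of $(0,\beta_{2})$ in the (H$_{4}$) case). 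From Lemma \ref{LL1}, $\theta(0^{+})=0$, and of course $\theta(\eta)=-\eta f(\eta)<0$ since $\eta\in(\beta_{1},\beta_{2})$.

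For the first sufficient condition, $f(\eta)-\eta f'(\eta)\leq 0$ is exactly $\theta'(\eta)\leq 0$. By the sign structure above this forces $\eta\leq\mu$, so $\theta$ is non-increasing on $[0,\eta]$. Hence $\theta(u)\geq\theta(\eta)$ for $u\in(0,\eta)$, and the numerator $\theta(\eta)-\theta(u)$ is $\leq 0$ on $(0,\eta)$; because $\theta(0^{+})=0>\theta(\eta)$ and $\theta$ is continuous, this inequality is strict on a right-neighborhood of $0$. Dividing by the positive quantity $[-F(u)]^{3/2}$ and integrating gives $G<0$.

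For the second sufficient condition, $\lim_{u\to 0^{+}}u^{1/3}f(u)>-\infty$ yields $f(u)\geq -Cu^{-1/3}$ on some $(0,\delta)$, so $-F(u)\leq\tfrac{3C}{2}u^{2/3}$ and $[-F(u)]^{3/2}\leq c\,u$ near $0$. Since $\theta(\eta)-\theta(u)\to\theta(\eta)<0$ as $u\to 0^{+}$, I may shrink $\delta$ so the numerator is bounded above by $\theta(\eta)/2<0$ on $(0,\delta)$. Then the integrand on $(0,\delta)$ is dominated above by a constant multiple of $-1/u$, so $\int_{0}^{\delta}$ diverges to $-\infty$. The remaining piece $\int_{\delta}^{\eta}$ is finite because $\theta$ is continuous on $[\delta,\eta]$ and near $u=\eta$ one has $-F(u)\sim f(\eta)(\eta-u)$, making the integrand of order $(\eta-u)^{-1/2}$, which is integrable. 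Therefore $G=-\infty<0$.

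The main obstacle is Case 2: unlike Case 1, we do not control the sign of $\theta(\eta)-\theta(u)$ on all of $(0,\eta)$, since when $\eta>\mu$ the function $\theta$ dips below $\theta(\eta)$ on an interval $(u_{0},\eta)$ and the numerator is positive there. The resolution is the observation that the growth bound on $f$ forces the denominator to vanish only linearly in $u$ at $0$, which is too slow to counteract the non-zero negative limit of the numerator, so the integral is automatically $-\infty$ and the sign of the integrand elsewhere is irrelevant. This lets me avoid any delicate cancellation analysis between the decreasing and increasing regimes of $\theta$.
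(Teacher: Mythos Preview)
Your argument is correct and follows the same route as the paper: rewrite $G=\int_0^\eta\frac{\theta(\eta)-\theta(u)}{[-F(u)]^{3/2}}\,du$, dispose of the case $\theta'(\eta)\le 0$ via the monotonicity of $\theta$ coming from Lemma~\ref{L1}, and for the second condition exploit $[-F(u)]^{3/2}=O(u)$ near $0$ (from the growth bound on $f$) together with $\theta(\eta)-\theta(u)\to\theta(\eta)<0$ to force $\int_0^\delta=-\infty$, while the tail $\int_\delta^\eta$ is absolutely convergent since the integrand is $O((\eta-u)^{-1/2})$. Your treatment of the second condition is marginally cleaner than the paper's, which splits further into the subcases $\theta'(\eta)\le 0$ and $\theta'(\eta)>0$ and invokes Lemma~\ref{L1} again to bound the numerator near $0$, whereas you use only $\theta(0^{+})=0$ from Lemma~\ref{LL1}.
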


\begin{proof}
We divide this proof into the next three steps.

\textbf{Step 1}. We prove that $G<0$ if $f(\eta )-\eta f^{\prime }(\eta
)\leq 0$. Since $\theta ^{\prime }(\eta )=f(\eta )-\eta f^{\prime }(\eta )<0$%
, and by Lemma \ref{L1}, we observe that $\theta (\eta )-\theta (u)<0$ for $%
0<u<\eta $. It follows that 
\begin{equation*}
G=\int_{0}^{\eta }\frac{-\eta f(\eta )-2F(u)+uf(u)}{[-F(u)]^{3/2}}%
du=\int_{0}^{\eta }\frac{\theta (\eta )-\theta (u)}{[-F(u)]^{3/2}}du<0.
\end{equation*}

\textbf{Step 2}. We prove that if $\theta ^{\prime }(\eta )>0$, there exists 
$\kappa _{1}\in (0,\eta )$ such that%
\begin{equation}
\int_{\kappa _{1}}^{\eta }\frac{\theta (\eta )-\theta (u)}{\left[ -F\left(
u\right) \right] ^{3/2}}du<\infty .  \label{5.09}
\end{equation}%
By L'H\"{o}pital's rule, we see that%
\begin{equation*}
\lim_{u\rightarrow \eta ^{-}}\frac{\theta (\eta )-\theta (u)}{\eta -u}%
=\theta ^{\prime }(\eta )>0.
\end{equation*}%
Then there exists $\delta _{1}\in (0,\eta )$ such that 
\begin{equation}
0<\theta (\eta )-\theta (u)<2\theta ^{\prime }(\eta )\left( \eta -u\right) 
\text{ \ for }\delta _{1}<u<\eta .  \label{7a}
\end{equation}%
Since $\theta ^{\prime }(\eta )>0$, we see that $f(\eta )>\eta f^{\prime
}(\eta )$. Then there exists $\kappa _{1}\in (\delta _{1},\eta )$ such that $%
f(u)>\eta f^{\prime }(\eta )$ for $\kappa _{1}<u<\eta .$ It follows that%
\begin{equation}
-F(u)=F(\eta )-F(u)=\int_{u}^{\eta }f(t)dt>\eta f^{\prime }(\eta )(\eta -u)%
\text{ \ for }\kappa _{1}<u<\eta .  \label{7b}
\end{equation}%
By (\ref{7a}) and (\ref{7b}), we obtain that%
\begin{equation*}
\int_{\kappa _{1}}^{\eta }\frac{\theta (\eta )-\theta (u)}{\left[ -F\left(
u\right) \right] ^{3/2}}du<\int_{\kappa _{1}}^{\eta }\frac{2\theta ^{\prime
}(\eta )\left( \eta -u\right) }{\left[ \eta f^{\prime }(\eta )(\eta -u)%
\right] ^{3/2}}du=\frac{2\theta ^{\prime }(\eta )}{\left[ \eta f^{\prime
}(\eta )\right] ^{3/2}}\int_{\kappa _{1}}^{\eta }\frac{1}{\sqrt{\eta -u}}%
du<\infty .
\end{equation*}%
Thus (\ref{5.09}) holds.

\textbf{Step 3}. We prove that $G<0$ if $\lim\limits_{u\rightarrow 0^{+}}u^{%
\frac{1}{3}}f(u)>-\infty $. We consider two cases.

Case 1. Assume that $\theta ^{\prime }(\eta )\leq 0$. By Step 1, we obtain
that $G<0$.

Case 2. Assume that $\theta ^{\prime }(\eta )>0$. By Lemma \ref{L1}, there
exists $\delta _{2}\in (0,\kappa _{1})$ such that 
\begin{equation}
\theta (\eta )-\theta (u)<\theta (\eta )-\theta (\delta _{2})<0\text{ \ for }%
0<u<\delta _{2}.  \label{5.03}
\end{equation}%
Since $\lim\limits_{u\rightarrow 0^{+}}u^{\frac{1}{3}}f(u)>-\infty $, and by
L'H\"{o}pital's rule, we see that%
\begin{equation*}
\lim_{u\rightarrow 0^{+}}\frac{F(u)}{u^{\frac{2}{3}}}=\frac{3}{2}%
\lim_{u\rightarrow 0^{+}}u^{\frac{1}{3}}f(u)\in (-\infty ,0].
\end{equation*}%
Then there exist $\varepsilon >0$ and $\kappa _{2}\in (0,\delta _{2})$ such
that 
\begin{equation}
-\varepsilon <\frac{F(u)}{u^{\frac{2}{3}}}<0\text{ \ for }0<u<\kappa _{2}.
\label{5.04}
\end{equation}%
By (\ref{5.03}) and (\ref{5.04}), then%
\begin{equation*}
\int_{0}^{\kappa _{2}}\frac{\theta (\eta )-\theta (u)}{\left[ -F\left(
u\right) \right] ^{3/2}}du<\frac{\theta (\eta )-\theta (\delta _{2})}{%
\varepsilon ^{\frac{3}{2}}}\int_{0}^{\kappa _{2}}\frac{1}{u}du=-\infty \text{
}
\end{equation*}%
So by Step 2, 
\begin{eqnarray*}
G &=&\int_{0}^{\eta }\frac{-\eta f(\eta )-2F(u)+uf(u)}{[-F(u)]^{3/2}}%
du=\int_{0}^{\eta }\frac{\theta (\eta )-\theta (u)}{[-F(u)]^{3/2}}du \\
&=&\int_{0}^{\kappa _{2}}\frac{\theta (\eta )-\theta (u)}{\left[ -F\left(
u\right) \right] ^{3/2}}du+\int_{\kappa _{2}}^{\kappa _{1}}\frac{\theta
(\eta )-\theta (u)}{\left[ -F\left( u\right) \right] ^{3/2}}du+\int_{\kappa
_{1}}^{\eta }\frac{\theta (\eta )-\theta (u)}{\left[ -F\left( u\right) %
\right] ^{3/2}}du \\
&=&-\infty .
\end{eqnarray*}

The proof is complete.
\end{proof}

\section{Proofs of Main Results}

\smallskip

\begin{proof}[Proof of Theorem \protect\ref{T1}]
Notice that $\hat{\lambda}=\left[ T(\eta ^{+})\right] ^{2}$ and $\kappa =%
\left[ T(\beta _{2}^{-})\right] ^{2}$. So by (\ref{defT}), (\ref{SS}) and
Lemmas \ref{L3}--\ref{L4}, the proof is complete.
\end{proof}

\smallskip

\begin{proof}[Proof of Theorem \protect\ref{T2}]
We divide this proof into the following three steps.

\textbf{Step 1. }We prove the statement (i). Assume that (H$_{1}$) holds. By
Lemma \ref{L1}(i), then $\theta \left( \alpha \right) -\theta \left(
u\right) <0$ for $0<u<\alpha <\beta _{2}$. So by (\ref{dT}) and (\ref{2BA}), 
$T^{\prime }(\alpha )<0$ for $\eta <\alpha <\beta _{2}$. It follows that the
bifurcation curve $S$ is monotone decreasing by (\ref{defT}) and (\ref{SS}).

\smallskip

\textbf{Step 2. }We prove the statement (ii) if (H$_{2}$) and (H$_{3}$)
hold, and $\beta _{2}<\infty $. Assume that (H$_{2}$) and (H$_{3}$) hold,
and $\beta _{2}<\infty $. Since $f(\beta _{2})=0$, we see that $\theta
(\beta _{2})=2F(\beta _{2})>0$. So by Lemma \ref{L5}, we obtain that%
\begin{equation}
\left\{ 
\begin{array}{l}
T(\alpha )\text{ has at most one critical point, a local minimum, on }%
(\sigma ,\rho ),\smallskip \\ 
T^{\prime }(\alpha )>0\text{ on }[\rho ,\beta _{2}).%
\end{array}%
\right.  \label{2.1a}
\end{equation}%
Then we consider four cases.

Case 1. Assume that $G<0$ and $\eta <\sigma $. By Lemma \ref{L5}(i), $%
T^{\prime }(\alpha )<0$ for $\eta <\alpha \leq \sigma $. So by (\ref{2.1a}), 
$T(\alpha )$ is strictly decreasing and then strictly increasing on $\left(
\eta ,\beta _{2}\right) $.

Case 2. Assume that $G<0$ and $\eta \geq \sigma $. Since $T^{\prime }(\eta
^{+})=G<0$, and by (\ref{2.1a}), $T(\alpha )$ is strictly decreasing and
then strictly increasing on $\left( \eta ,\beta _{2}\right) $.

Case 3. Assume that $G>0$ and $\eta <\sigma $. By Lemma \ref{L5}(i), $%
T^{\prime }(\alpha )<0$ for $\eta <\alpha \leq \sigma $. It follows that $%
G=T^{\prime }(\eta ^{+})\leq 0$. It is a contradiction.

Case 4. Assume that $G>0$ and $\eta \geq \sigma $. Since $T^{\prime }(\eta
^{+})=G>0$, and by (\ref{2.1a}), $T(\alpha )$ is strictly increasing on $%
\left( \eta ,\beta _{2}\right) $.

\noindent By Cases 1--4, (\ref{defT}) and (\ref{SS}), the statement (ii)
holds.

\smallskip

\textbf{Step 3. }We prove the statement (ii) if (H$_{2}$) and (H$_{3}$)
hold, and $\beta _{2}=\infty $. Assume that (H$_{2}$) and (H$_{3}$) hold,
and $\beta _{2}=\infty $. We consider two cases.

Case 1. Assume that $\lim\limits_{u\rightarrow \infty }f(u)=\infty $. By
Lemma \ref{L5.5}(ii), then $\theta (\beta
_{2}^{-})=\lim\limits_{u\rightarrow \infty }\theta (u)>0$. So by Lemma \ref%
{L5}, then (\ref{2.1a}) holds. Then by similar argument in Step 2, the
statement (ii) holds.

Case 2. Assume that $\lim\limits_{u\rightarrow \infty }f(u)<\infty $. If $%
\theta (\beta _{2}^{-})=\lim\limits_{u\rightarrow \infty }\theta (u)>0$, by
similar argument in Step 2, then the statement (ii) holds. If $\theta (\beta
_{2}^{-})=\lim\limits_{u\rightarrow \infty }\theta (u)\leq 0$, by Lemmas \ref%
{L5} and \ref{L5.5}(i), then%
\begin{equation*}
\left\{ 
\begin{array}{l}
T(\alpha )\text{ has at most one critical point, a local minimum, on }%
(\sigma ,\infty ),\smallskip \\ 
\lim\limits_{\alpha \rightarrow \infty }T(\alpha )=\infty .%
\end{array}%
\right.
\end{equation*}%
So by similar argument in Step 2, then the statement (ii) holds.

\textbf{Step 4. }We prove the statement (iii) if (H$_{2}$) and (H$_{4}$)
hold. Since (H$_{2}$) holds, we see that $\theta ^{\prime }(\beta
_{2}^{-})>0 $. If $G<0$, by Lemma \ref{L6}, then $T(\alpha )$ is strictly
decreasing and then strictly increasing on $\left( \eta ,\beta _{2}\right) $%
; and if $G\geq 0$, then $T(\alpha )$ is strictly increasing on $\left( \eta
,\beta _{2}\right) $. So by (\ref{defT}) and (\ref{SS}), the statement (iii)
holds.

The proof is complete.
\end{proof}

\smallskip

\begin{proof}[Proof of Corollary \protect\ref{C0}]
Assume that $\beta _{2}<\infty $ and (H$_{4}$) holds. Since $\beta
_{2}<\infty $ and $f(\beta _{2})=0$, we see that $\theta (\beta
_{2})=2F(\beta _{2})>0$. So by Lemma \ref{L1}(iii), there exists $\gamma \in
(0,\beta _{2})$ such that $\theta ^{\prime }(u)<0$ on $\left( 0,\gamma
\right) $, $\theta ^{\prime }(\gamma )=0$ and $\theta ^{\prime }(u)>0$ on $%
\left( \gamma ,\beta _{2}\right) $. Then we observe that%
\begin{equation*}
g^{\prime }(u)=\frac{-\theta ^{\prime }(u)}{u^{2}}\left\{ 
\begin{array}{ll}
>0 & \text{for }0<u<\gamma , \\ 
=0 & \text{for }u=\gamma , \\ 
<0 & \text{for }\gamma <u<\beta _{2},%
\end{array}%
\right.
\end{equation*}%
which implies that (H$_{2}$) holds. So by Theorem \ref{T2}, the bifurcation
curve $S$ is $\subset $-shaped if $G<0$, and is monotone increasing if $%
G\geq 0$. The proof is complete.
\end{proof}

\smallskip

\begin{proof}[Proof of Theorem \protect\ref{T0}]
By Lemma \ref{L7}, then $G<0$ if 
\begin{equation*}
\text{either }f(\eta )-\eta f^{\prime }(\eta )\leq 0\text{ \ or }%
\lim\limits_{u\rightarrow 0^{+}}u^{\frac{1}{3}}f(u)>-\infty .
\end{equation*}%
If $f(0^{+})>-\infty $, then%
\begin{equation*}
\lim\limits_{u\rightarrow 0^{+}}u^{\frac{1}{3}}f(u)=0>-\infty ,
\end{equation*}%
which implies that $G<0$. The proof is complete.
\end{proof}

\smallskip

\begin{proof}[Proof of Corollary \protect\ref{C1}]
Assume that $f(0^{+})>-\infty $. By Theorem \ref{T0}, then $G<0$. If either
((H$_{2}$) and (H$_{3}$) hold), or ($\beta _{2}<\infty $ and (H$_{4}$)
holds), by Theorem \ref{T2} and Corollary \ref{C0}, then the bifurcation
curve $S$ is $\subset $-shaped. The proof is complete.
\end{proof}

\smallskip

\begin{proof}[Proof of Corollary \protect\ref{C2}]
Assume that $\beta _{2}=\infty $ and $f^{\prime \prime }(u)>0$ on $\left(
0,\infty \right) $. Then $\theta ^{\prime \prime }(u)=-uf^{\prime \prime
}(u)<0$. We assert that $\theta ^{\prime }(0^{+})\leq 0$. It follows that $%
\theta ^{\prime }(u)<\theta ^{\prime }(0^{+})\leq 0$ for $0<u<\beta _{2}$.
So we see that%
\begin{equation*}
g^{\prime }(u)=\frac{uf^{\prime }(u)-f(u)}{u^{2}}=\frac{-\theta ^{\prime }(u)%
}{u^{2}}>0\text{ \ for }0<u<\beta _{2},
\end{equation*}%
which implies that (H$_{1}$) holds. Then by Theorem \ref{T2}(i), the
bifurcation curve $S$ is monotone decreasing. In addition, since $\beta
_{2}=\infty $ and $f^{\prime \prime }(u)>0$ on $\left( 0,\infty \right) $,
we observe that 
\begin{equation*}
\lim\limits_{u\rightarrow \infty }f(u)=\infty \text{ \ and \ }%
\lim\limits_{u\rightarrow \infty }f^{\prime }(u)\in (0,\infty ].
\end{equation*}%
It follows that%
\begin{equation*}
\lim\limits_{u\rightarrow \infty }g(u)=\lim\limits_{u\rightarrow \infty
}f^{\prime }(u)\in (0,\infty ].
\end{equation*}%
So by Theorem \ref{T1}, we obtain (\ref{c2a}).

Next, we prove that $\theta ^{\prime }(0^{+})\leq 0$. Since $f^{\prime
\prime }(u)>0$ on $\left( 0,\infty \right) $, we observe that $-\infty
<f(0^{+})\leq 0$. Suppose $\lim_{u\rightarrow 0^{+}}uf^{\prime }(u)<0$.
There exist $\delta ,\varepsilon >0$ such that $uf^{\prime }(u)<-\varepsilon 
$ for $0<u\leq \delta $. Then 
\begin{equation*}
-\infty <f(\delta )-f(0^{+})=\int_{0}^{\delta }f^{\prime }(t)dt<-\varepsilon
\int_{0}^{\delta }\frac{1}{t}dt=-\infty ,
\end{equation*}
which is a contradiction. So $\lim_{u\rightarrow 0^{+}}uf^{\prime }(u)\geq 0$%
. Then%
\begin{equation*}
\theta ^{\prime }(0^{+})=\lim_{u\rightarrow 0^{+}}\left[ f(u)-uf^{\prime }(u)%
\right] \leq 0.
\end{equation*}%
The proof is complete.
\end{proof}

\section{Appendix}

In this section, we point out the mistake of the proof of Theorem \ref{RT2}
in \cite{ref8}. In \cite{ref8}, authors proved that 
\begin{equation}
T^{\prime }(\alpha )<0\text{ \ for }0<\alpha \leq \gamma \text{ \ if (\ref%
{3.6}) holds.}  \label{e1}
\end{equation}%
Notice that (\ref{e1}) is an important conclusion in demonstrating that the
bifurcation curve is $\subset $-shaped in \cite{ref8}. However, we give a
example to illustrate that%
\begin{equation}
T(\alpha )\text{ may not be well-defined for }0<\alpha \leq \gamma \text{ \
if (\ref{3.6}) holds.}  \label{e2}
\end{equation}%
It follows that (\ref{e1}) is not completely correct.

We consider the function $f\in C^{2}(0,\infty )$ satisfying that%
\begin{equation*}
f(u)=-u^{2}+\frac{21}{10}u-1\text{ \ for }0<u<1.02.
\end{equation*}%
Clearly, (P$_{1}$) and (P$_{2}$) hold. We compute that%
\begin{equation*}
F(u)=-\frac{1}{3}u^{3}+\frac{21}{20}u^{2}-u\text{ \ and \ }f^{\prime
}(u)=-2u+\frac{21}{10}\text{ \ for }0<u<1.02.
\end{equation*}%
Since%
\begin{equation*}
\theta ^{\prime }(u)=f(u)-uf^{\prime }(u)=\left( u-1\right) \left(
u+1\right) \left\{ 
\begin{array}{ll}
<0 & \text{for }0<u<1, \\ 
=0 & \text{for }u=1, \\ 
>0 & \text{for }1<u<1.02,%
\end{array}%
\right.
\end{equation*}%
we see that (\ref{3.6}) holds and $\gamma =1$. Then we observe that $F(u)<0$
for $0<u\leq \gamma =1$, see Figure \ref{fig6}. 

\begin{figure}[h] % float placement: (h)ere, page (t)op, page (b)ottom, other (p)age
  \centering
  % file name: C:/Users/M/Dropbox/工作-Huang/研究資料/黃少遠/投稿中/(2026投到AMEN)-semipostone with concave and geometric concave (R)/投稿中檔案/6 arXIV/fig6.jpg
  \includegraphics[width=3.05in,height=2.94in,keepaspectratio]{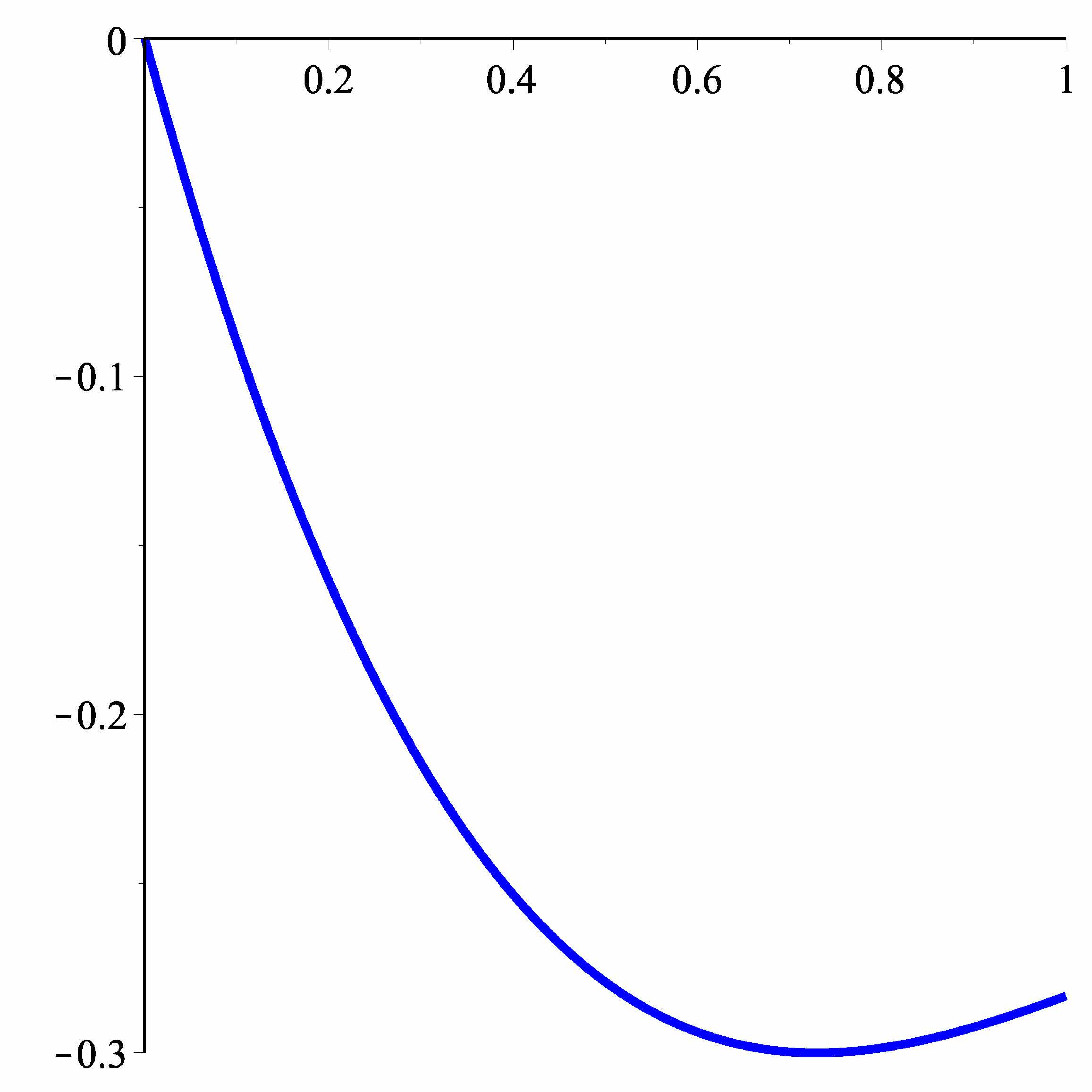}
  \caption{The grpah of $F(u)=-\frac{1}{3}u^{3}+\frac{21}{20}%
u^{2}-u $ on $[0,\protect\gamma ]$.}
  \label{fig6}
\end{figure}
So by the definition of $T$, we know that $T(\alpha )$ is not
well-defined for $0<\alpha \leq \gamma =1.$ Thus the proof of Theorem \ref%
{RT2} is not complete under the case $\gamma <\eta .$

\smallskip

\section{Statements and Declarations}

\begin{description}
\item[Funding] The authors declare that no funds, grants, or other support
were received during the preparation of this manuscript.

\item[Competing Interests] The authors have no relevant financial or
non-financial interests to disclose.

\item[Author Contributions] This article has only one author. The study
conception and design, material preparation, data collection and analysis,
as well as the writing of the manuscript, were all carried out solely by the
author.
\end{description}

\bigskip

\end{document}